\renewcommand{\phi}{\varphi}
\renewcommand{\ker}{\Ker}
\def\ch{\mathrm{ch}}
\def\cl{\mathrm{cl}}
\def\si{\sigma}
\def\Ga{\Gamma}
\def\la{\lambda}
\def\dd{\partial}
\newcommand{\mc}[1]{\mathcal{#1}}
\newcommand{\mb}[1]{\mathbb{#1}}
\newcommand{\ul}[1]{\,\underline {#1}\,}
\DeclareMathOperator{\Hom}{Hom}
\DeclareMathOperator{\End}{End}
\DeclareMathOperator{\ad}{ad}
\DeclareMathOperator{\Der}{Der}
\DeclareMathOperator{\Ker}{Ker}
\DeclareMathOperator{\sign}{sign}
\DeclareMathOperator{\gr}{gr}
\DeclareMathOperator{\dr}{dr}
\DeclareMathOperator{\sesq}{sesq}
\newcommand{\sym}{\mathop{\rm sym }}
\newcommand{\PV}{\mathop{\rm PV }}
\newcommand{\Har}{\mathop{\rm Har }}
\newcommand{\Hoc}{\mathop{\rm Hoc }}
\def\smallunderbrace#1{\mathop{\vtop{\m@th\ialign{##\crcr
   $\hfil\displaystyle{#1}\hfil$\crcr
   \noalign{\kern3\p@\nointerlineskip}%
   \tiny\upbracefill\crcr\noalign{\kern3\p@}}}}\limits}
\theoremstyle{plain}
\newtheorem{theorem}{Theorem}[section]
\newtheorem{lemma}[theorem]{Lemma}
\newtheorem{proposition}[theorem]{Proposition}
\newtheorem{corollary}[theorem]{Corollary}
\theoremstyle{definition}
\newtheorem{definition}[theorem]{Definition}
\newtheorem{example}[theorem]{Example}
\theoremstyle{remark}
\newtheorem{remark}[theorem]{Remark}
\numberwithin{equation}{section}
\definecolor{light}{gray}{.9}
\begin{document}

\title{Classical and variational Poisson cohomology}

\author{Bojko Bakalov}
\address{Department of Mathematics, North Carolina State University,
Raleigh, NC 27695, USA}
\email{bojko\_bakalov@ncsu.edu}
\urladdr{https://sites.google.com/a/ncsu.edu/bakalov}

\author{Alberto De Sole}
\address{Dipartimento di Matematica, Sapienza Universit\`a di Roma,
P.le Aldo Moro 2, 00185 Rome, Italy}
\email{desole@mat.uniroma1.it}
\urladdr{www1.mat.uniroma1.it/$\sim$desole}

\author{Reimundo Heluani}
\address{IMPA, Rio de Janeiro, Brasil}
\email{rheluani@gmail.com}
\urladdr{http://w3.impa.br/$\sim$heluani/}

\author{Victor G. Kac}
\address{Department of Mathematics, MIT,
77 Massachusetts Ave., Cambridge, MA 02139, USA}
\email{kac@math.mit.edu}
\urladdr{http://www-math.mit.edu/$\sim$kac/}

\author{Veronica Vignoli}
\address{Dipartimento di Matematica, Sapienza Universit\`a di Roma,
P.le Aldo Moro 2, 00185 Rome, Italy}
\email{vignoli@mat.uniroma1.it}



\begin{abstract}
We prove that, for a Poisson vertex algebra $\mc V$,
the canonical injective homomorphism of the variational cohomology of $\mc V$
to its classical cohomology is an isomorphism, provided that $\mc V$,
viewed as a differential algebra, is an algebra of differential polynomials
in finitely many differential variables.
This theorem is one of the key ingredients in the computation of vertex algebra cohomology.
For its proof, we introduce the sesquilinear Hochschild and Harrison cohomology complexes
and prove a vanishing theorem for the symmetric sesquilinear Harrison cohomology
of the algebra of differential polynomials in finitely many differential variables.
\end{abstract}
\keywords{
Poisson vertex algebra (PVA),
classical operad,
classical PVA cohomology,
variational PVA cohomology,
sesquilinear Hochschild and Harrison cohomology.
}

\maketitle


\pagestyle{plain}

\tableofcontents

\section{Introduction}\label{Intro}

In the series of papers \cite{BDSHK18,BDSHK19,BDSK19,BDSKV19,BDSK21},
the foundations of cohomology theory of vertex algebras have been developed.
The main tool for the computation of this cohomology is the reduction to the variational Poisson
vertex algebra (PVA) cohomology.
The latter is a well-developed theory with many examples computed explicitly \cite{DSK13,BDSK19}.
Its importance stems from the fact that vanishing of the first variational PVA cohomology
leads to the construction of integrable hierarchies of Hamiltonian PDEs.

The reduction of the computation of the vertex algebra cohomology
to the variational PVA cohomology is performed via the classical PVA cohomology
in three steps as follows.
First, let $V$ be a vertex algebra over a field $\mb F$, 
with an increasing filtration by $\mb F[\partial]$-submodules
such that $\mc V:=\gr  V$ carries a canonical structure of a PVA.
Let $(C_{\ch}(V),d)$ be the vertex algebra cohomology complex of $V$.
A filtration on $V$ induces a decreasing filtration on $C_{\ch}(V)$,
and we have a canonical injective map \cite{BDSHK18}:
\begin{equation}\label{eq:1.1}
\gr C_{\ch}(V)\hookrightarrow C_{\cl}(\mc V),
\end{equation}
where $(C_{\cl}(\mc V),\gr d)$ is the classical PVA cohomology complex of $\mc V$.
Moreover, the map \eqref{eq:1.1} is an isomorphism, provided that $\mc V\simeq V$,
as $\mb F[\partial]$-modules \cite{BDSHK19}.

Second, in \cite{BDSK21},
we constructed a spectral sequence from the classical PVA cohomology of $\mc V$
to the vertex algebra cohomology of $V$.

Third, in \cite{BDSHK18}, we constructed a canonical injective map 
\begin{equation}\label{eq:1.2}
H_{\PV}(\mc V)
\hookrightarrow H_{\cl}(\mc V)
\end{equation}
from the variational PVA cohomology of $\mc V$
to its classical PVA cohomology,
and we conjectured that \eqref{eq:1.2} is an isomorphism,
provided that $\mc V$, viewed as a differential algebra,
is an algebra of differential polynomials in finitely many indeterminates.
The main goal of the present paper is to prove this conjecture.

Recall that a Poisson vertex algebra (abbreviated PVA)
is a differential algebra $\mc V$ with a derivation $\partial$,
endowed with a bilinear $\lambda$-bracket $\mc V\times\mc V\to\mc V[\lambda]$,
satisfying the axioms of a Lie conformal algebra and the Leibniz rules \
(see (i)--(iii) and (iv)-(iv'), respectively, in Definition \ref{def:pva}).
In order to construct the variational PVA cohomology complex $(C_{\PV}(\mc V),d)$,
introduce the vector spaces
\begin{equation}\label{eq:1.3}
\mc V_n=\mc V[\lambda_1,\dots,\lambda_n]/
(\partial+\lambda_1+\dots+\lambda_n)\mc V[\lambda_1,\dots,\lambda_n]
\,,\quad
n\geq0
\,,
\end{equation}
where $\lambda_1,\dots,\lambda_n$ are indeterminates.
Then the space of $n$-cochains $C^n_{\PV}(\mc V)$
consists of all linear maps
\begin{equation}\label{eq:1.4}
f\colon\mc V^{\otimes n}\to\mc V_n
\,,\quad
v\mapsto f_{\lambda_1,\dots,\lambda_n}(v),
\end{equation}
satisfying the sesquilinearity conditions \eqref{eq:sesq}, the skewsymmetry conditions \eqref{eq:skew}, and the Leibniz rules \eqref{eq:leib}.
The variational PVA differential $d\colon C^n_{\PV}(\mc V)\to C^{n+1}_{\PV}(\mc V)$ is defined 
by formula \eqref{eq:lca-d}.

In order to define the classical PVA cohomology complex $(C_{\cl}(\mc V),d)$, denote by $\mc G(n)$ the set of oriented graphs with vertices $\{1,\dots,n\}$ and without tadpoles. Then the space of $n$-cochains $C^n_{\cl}(\mc V)$ consists of linear maps (cf. \eqref{eq:1.3}, \eqref{eq:1.4})
\begin{equation}\label{eq:1.5}
Y\colon \mb F\mc G(n)\otimes\mc V^{\otimes n}\to\mc V_n
\,,\quad
\Gamma\otimes v\mapsto Y^{\Gamma}_{\lambda_1,\dots,\lambda_n}(v)
\,,
\end{equation}
satisfying the skewsymmetry conditions \eqref{eq:actclassicoperad}, the cycle relations \eqref{eq:cycle1}, and the sesquilinearity conditions \eqref{eq:sesq2}.
The classical PVA differential is defined by formula \eqref{eq:explicitform}.

The complexes $(C_{\PV}(\mc V),d)$ and $(C_{\cl}(\mc V),d)$ both look similar to the Chevalley--Eilenberg complex for a Lie algebra with coefficients in the adjoint representation. The reason for this similarity is the operadic origin for all these cohomology theories, as explained in \cite{BDSHK18}.

An important observation is that we have a canonical injective map of complexes $\varphi\colon C_{\PV}(\mc V)\to C_{\cl}(\mc V)$ defined by 
\begin{equation}\label{eq:1.6}
\varphi(f)(\Gamma\otimes(v_1\otimes\dots\otimes v_n))
=
\delta_{\Gamma,[n]}f(v_1\otimes\dots\otimes v_n)
\,,
\end{equation}
where $[n]$ denotes the graph with $n$ vertices and no edges.
It was proved in \cite{BDSHK18} that the map \eqref{eq:1.6} induces an injective map in cohomology
\begin{equation}\label{eq:1.7}
\varphi^*\colon H_{\PV}(\mc V)\hookrightarrow H_{\cl}(\mc V)
\,.
\end{equation}
The main result of the present paper is the following (see Theorem \ref{thm:main}).
\begin{theorem}\label{thm:1.1}
Provided that, as a differential algebra, the PVA $\mc V$ is a finitely-generated algebra of differential polynomials, the map $\varphi^*$ is an isomorphism.
\end{theorem}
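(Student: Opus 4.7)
The injectivity of $\varphi^*$ is already established in \cite{BDSHK18}, so the task reduces to showing that every classical PVA cohomology class admits a variational representative. The strategy is to compare the two complexes by filtering $C_{\cl}(\mc V)$ according to the combinatorial structure of the graphs $\Gamma\in\mc G(n)$: the subcomplex $\varphi(C_{\PV}(\mc V))$ consists precisely of the cochains supported on the empty graph $[n]$, and the cokernel of $\varphi$ is controlled by cochains supported on graphs with at least one edge.

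The first step is to unpack the classical PVA differential \eqref{eq:explicitform} with respect to an edge-count filtration and analyze how it interacts with the graph structure. My claim (and the reason the authors introduce the sesquilinear Hochschild and Harrison cohomology in the abstract) is that the part of $C_{\cl}(\mc V)$ supported on graphs with edges, modulo the cycle relations \eqref{eq:cycle1}, is naturally identified with a sesquilinear analogue of the Hochschild complex built from the commutative differential algebra $\mc V$, and further that the symmetry imposed by the skewsymmetry condition \eqref{eq:actclassicoperad} combined with the cycle relations selects the symmetric (Harrison) part. Thus I would first define the sesquilinear Hochschild complex $C_{\Hoc}^\bullet(\mc V)$ and its Harrison subcomplex $C_{\Har}^\bullet(\mc V)$ as linear maps $\mc V^{\otimes n}\to\mc V_n$ satisfying the sesquilinearity conditions \eqref{eq:sesq2}, and prove that the graph-edge filtration identifies the associated graded of $C_{\cl}(\mc V)/\varphi(C_{\PV}(\mc V))$ with pieces built from $C_{\Har}^\bullet(\mc V)$.

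The second step is the vanishing theorem: for $\mc V=\mb F[u_i^{(n)}]$ an algebra of differential polynomials in finitely many differential generators, the symmetric sesquilinear Harrison cohomology vanishes in positive degree. The model here is the classical Hochschild--Kostant--Rosenberg/Harrison vanishing for polynomial algebras: Harrison cohomology of a free commutative algebra vanishes in degrees $\geq 2$, and the proof proceeds by an explicit shuffle-type contracting homotopy. The sesquilinear refinement should be obtained by constructing such a homotopy on the chain level, on the free commutative algebra in the variables $u_i^{(n)}$, and then verifying that it descends through the quotient by $(\partial+\lambda_1+\cdots+\lambda_n)\mc V[\lambda_1,\dots,\lambda_n]$ that defines $\mc V_n$. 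Once the vanishing is in hand, a standard spectral-sequence argument on the edge filtration shows that $\varphi$ is a quasi-isomorphism, proving the theorem.

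The main obstacle I expect is the construction of the contracting homotopy for the symmetric sesquilinear Harrison complex of $\mb F[u_i^{(n)}]$. In the non-sesquilinear setting one has a clean Eulerian decomposition, but here the homotopy must intertwine correctly with the action of $\partial$ (to preserve sesquilinearity) and must respect the symmetrization/cycle relations coming from the graph combinatorics, while only finitely many differential generators are available even though $\mc V$ itself is a polynomial algebra in infinitely many ordinary variables. Matching the Harrison antisymmetrization with the cycle relations on $\mc G(n)$, and then showing that this matching is preserved by $\gr d$, is where the bulk of the technical work will lie.
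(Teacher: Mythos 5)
Your overall architecture coincides with the paper's: an edge-count filtration of $C_{\cl}(\mc V)$ (the paper uses $F_sC_{\cl}=\{Y: Y^\Gamma=0 \text{ if } s(\Gamma)<s\}$, with $s(\Gamma)=n-|E(\Gamma)|$), an identification of the associated graded pieces with symmetric sesquilinear Harrison complexes (Theorem \ref{thm:whatever}), a vanishing theorem for those complexes when $\mc V$ is a differential polynomial algebra, and a descending induction on the filtration to produce a variational representative. So the plan is the right one.

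However, there are two genuine gaps in the part that carries all the weight. First, your vanishing statement is wrong as formulated: the symmetric $s$-sesquilinear Harrison cohomology does \emph{not} vanish in all positive degrees; it vanishes only for $n>s$ (Theorem \ref{thm:reimundo}). The piece $n=s$ is exactly $\gr_nC^n_{\cl}\simeq F_nC^n_{\cl}$, i.e.\ the image of $\varphi$, and its cohomology is where the variational classes survive --- if your blanket vanishing held, $\varphi^*$ would be the zero map in positive degrees rather than an isomorphism. The range $1\le s<n$ is what makes the induction in Section \ref{sec:8} terminate at the variational subcomplex. Second, your proposed proof of the vanishing --- an explicit shuffle-type contracting homotopy that ``descends through the quotient by $(\partial+\lambda_1+\dots+\lambda_n)$'' --- is precisely the step you acknowledge as an obstacle, and it is not how the paper proceeds; even in the classical (non-differential) case the Harrison vanishing for free algebras is obtained from the Eulerian-idempotent (Hodge) decomposition rather than a chain homotopy. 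The paper's argument instead establishes that $A_+$ is free as an $\mb F[\partial]$-module (Lemma \ref{lem:a+isfree}), uses the Koszul resolution of $\mb F$ over $\mb F[\partial_1,\dots,\partial_s]$ to reduce the $\partial$-equivariant cohomology to ordinary Hochschild cohomology via $\Ext$-vanishing, applies the HKR theorem to identify the latter with polyvector fields, and then observes that for $n>s$ every partition $k_1+\dots+k_s=n$ has some $k_i>1$, so the Harrison (Hodge type $\underline{1}$) component must vanish. Without this (or an equivalent) mechanism for handling the $\mb F[\partial]$-equivariance, the central step of your proof is missing rather than merely technical.
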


The proof of this theorem uses the $s$-sesquilinear Hochschild cohomology complex, defined for an associative algebra $A$ with a derivation $\partial$ and a differential bimodule $M$ over $A$ as follows.
For $s=1$, this complex is the differential Hochschild cohomology complex,
for which the space of $n$-cochains is $\Hom_{\mb F[\partial]}(A^{\otimes n},M)$
and the differential $d$ is defined by the usual Hochschild's formula
\begin{equation}\label{eq:1.8}
\begin{split}
(df)(a_1&\otimes\dots\otimes a_{n+1})
=
a_1f(a_2\otimes\dots\otimes a_{n+1}) \\
& +
\sum_{i=1}^n(-1)^i
f(a_1\otimes\dots\otimes a_{i-1}\otimes a_ia_{i+1}\otimes a_{i+2}\otimes\dots\otimes a_{n+1}) \\
& +(-1)^{n+1}
f(a_1\otimes\dots\otimes a_n)a_{n+1}
\,.
\end{split}
\end{equation}
For an arbitrary positive integer $s$, the definition is similar but more complicated.
Given $\ul k=(k_1,\dots,k_s)\,\in\mb Z_{\geq0}^s$, let 
$$
K_0=0
\,,\,\,\,\,
K_t=k_1+\dots+k_t\,, \quad t=1,\dots,s
\,,
$$
and
$$
n=K_s=k_1+\dots+k_s
\,.
$$
Given $v_1,\dots,v_n\in A$,
we denote 
$$
v_{\ul k}^t=v_{K_{t-1}+1}\otimes\dots\otimes v_{K_t}\in A^{\otimes k_t}
\,,\,\, t=1,\dots,s\,,
$$
so that 
$$
v
=
v_1\otimes\dots\otimes v_n
=
v_{\ul k}^1 \otimes\dots\otimes v_{\ul k}^s
\,.
$$
Then the space of $s$-sesquilinear Hochschild $n$-cochains consists of linear maps
(cf. \eqref{eq:1.3}, \eqref{eq:1.4}):
$$
F_{\Lambda_1,\dots,\Lambda_s}\colon
A^{\otimes n}\to M[\Lambda_1,\dots,\Lambda_s]/(\partial+\Lambda_1+\dots+\Lambda_s)M[\Lambda_1,\dots,\Lambda_s]
\,,
$$
satisfying the sesquilinearity conditions ($t=1,\dots,s$),
\begin{equation}\label{eq:1.9}
F_{\Lambda_1,\dots,\Lambda_s}(v_{\ul k}^1\otimes\cdots \partial v_{\ul k}^t\cdots\otimes v_{\ul k}^s)
=
-\Lambda_t
F_{\Lambda_1,\dots,\Lambda_s}(v)
\,.
\end{equation}
The definition of the differential is similar to \eqref{eq:1.8}:
see formulas \eqref{20200623:eq4} and \eqref{20200625:eq2}.
Note that for $s=1$ this coincides with the differential Hochschild complex
if we identify $M$ with $M[\Lambda_1]/(\partial+\Lambda_1)M[\Lambda_1]$.

If $A$ is a commutative associative algebra and $M$ is a symmetric bimodule over $A$,
the differential Hochschild complex contains the Harrison subcomplex,
defined by the Harrison conditions \eqref{eq:harrisoncond}.
We define a similar $s$-sesquilinear Harrison subcomplex of the $s$-sesquilinear
Hochschild complex by Proposition \ref{20200922:prop}.
Moreover, we define by \eqref{20200625:eq3} the action of the symmetric group $S_s$
on the $s$-sesquiinear Harrison complex,
and the symmetric $s$-sesquiinear Harrison complex of $S_s$-invariants,
which we denote by $(C^s_{\sym,\Har}(A,M),d)$.

Our key observation is that the classical PVA complex $(C_{\cl}(\mc V),d)$ is closely related 
to the complex $(C^s_{\sym,\Har}(\mc V, \mc V),d)$.
Namely, introduce an increasing filtration of $C^n_{\cl}$ by letting
$$
F_sC_{\cl}^n
=
\big\{Y\in C_{\cl}^n
\,\big|\,
Y^{\Gamma}=0 \text{ if } s > n-e(\Gamma)
\big\}
\,,
$$
where $e(\Gamma)$ is the number of edges of the graph $\Gamma$.
We prove the following (see Theorem \ref{thm:whatever}):
\begin{theorem}\label{thm:1.2}
For a PVA $\mc V$ and $s\geq1$, we have  a canonical isomorphism of complexes:
$$
\gr_sC_{\cl}(\mc V)
\simeq
C_{\sym,\Har}^s(\mc V,\mc V)
\,,
$$
where on the right the first $\mc V$ is viewed as a commutative associative differential algebra
and the second $\mc V$ as a symmetric bimodule over it.
\end{theorem}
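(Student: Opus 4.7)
The plan is to construct a natural isomorphism
$$
\Phi\colon\gr_s C^n_{\cl}(\mc V)\to C^s_{\sym, \Har}(\mc V,\mc V)
$$
(for every $n$, landing in the piece with inputs of total arity $n=k_1+\dots+k_s$) and then to check that it intertwines the induced classical differential with the symmetric sesquilinear Harrison differential. By the definition of $F_s$, an element of $\gr_s C^n_{\cl}$ is represented by a classical cochain $Y$ whose value $Y^\Gamma$ can be nonzero only for $e(\Gamma)=n-s$, taken modulo cochains supported on graphs with strictly more edges. Any such graph has at least $s$ connected components, with equality if and only if $\Gamma$ is a forest.

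The first step is the reduction to forests. If $\Gamma$ has $e(\Gamma)=n-s$ and contains a cycle, then it has strictly more than $s$ components, and the cycle relation \eqref{eq:cycle1} expresses $Y^\Gamma$ as a linear combination of $Y^{\Gamma'}$ for graphs $\Gamma'$ with the same edge-count, obtained by rotating orientations around the cycle. Iterating, any $\gr_s$-cochain is determined by its values on forests with exactly $s$ connected components. Such a forest with component sizes $k_1,\dots,k_s$ induces a partition of $\{1,\dots,n\}$ into blocks $B_1,\dots,B_s$, and the classical sesquilinearity \eqref{eq:sesq2} collapses the variables $\lambda_i$ attached to vertices in a single component into one variable $\Lambda_t=\sum_{i\in B_t}\lambda_i$. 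This is precisely the $s$-sesquilinear structure on the Harrison side.

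Next, I would fix for each $\ul k$ a canonical forest $\Gamma_{\ul k}$ (say a disjoint union of linear chains with a standard orientation on each block) and set
$$
\Phi(Y)_{\Lambda_1,\dots,\Lambda_s}(v^1_{\ul k}\otimes\dots\otimes v^s_{\ul k})
=
Y^{\Gamma_{\ul k}}_{\lambda_1,\dots,\lambda_n}(v_1\otimes\dots\otimes v_n).
$$
The skewsymmetry \eqref{eq:actclassicoperad} applied to permutations of vertices within a single component of $\Gamma_{\ul k}$ should translate directly into the Harrison shuffle conditions \eqref{eq:harrisoncond} on the corresponding block, while the freedom to interchange components of equal size yields the $S_s$-invariance defining $C^s_{\sym,\Har}$. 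Conversely, any $F\in C^s_{\sym,\Har}$ extends uniquely to a cochain on all forests by imposing these symmetries, and then to arbitrary graphs by inverting the cycle relation within fixed edge-count; this produces $\Phi^{-1}$.

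Finally, the classical differential \eqref{eq:explicitform} splits naturally into a part that preserves $n-e(\Gamma)$ (attaching a new vertex inside an existing component) and one that decreases it (joining two previously disconnected components by a new edge); only the former survives on $\gr_s$, and on forest cochains its explicit form should reproduce, term by term, the sesquilinear Hochschild differential \eqref{20200623:eq4}--\eqref{20200625:eq2} restricted to the symmetric Harrison subcomplex. I expect the main obstacle to be the reduction to forests combined with the sign bookkeeping: one must verify that the cycle relation on oriented graphs, modulo $F_{s+1}$, imposes the Harrison shuffle conditions with matching signs, rather than some skewed variant. Once this sign matching is in place, the bijectivity of $\Phi$ reduces to a combinatorial check and the compatibility with differentials follows by inspecting the corresponding operadic compositions.
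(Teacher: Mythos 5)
Your overall strategy coincides with the paper's: evaluate a cochain $Y$ on the standard $\ul k$-lines $\Gamma_{\ul k}$, identify the classical sesquilinearity \eqref{eq:sesq1}--\eqref{eq:sesq2} with the $s$-sesquilinear structure, get $S_s$-invariance from skewsymmetry applied to the permutations interchanging components, and match the surviving terms of \eqref{eq:explicitform} (those with $\deg_\Gamma(h)=1$ and $\epsilon_\Gamma(i,j)\neq 0$) with the Hochschild-type differential \eqref{20200623:eq4}. However, there are two genuine gaps at exactly the points you flag as ``obstacles'' but do not resolve. First, the Harrison conditions \eqref{20200623:eq3} do \emph{not} follow from ``skewsymmetry applied to permutations of vertices within a single component.'' Skewsymmetry \eqref{eq:actclassicoperad} merely rewrites $Y^{\pi(\Gamma_{k_t})}$ in terms of $Y^{\Gamma_{k_t}}$ with permuted arguments; to obtain the identity $L_m^{(t)}F=F$ you need a nontrivial relation \emph{among the graphs themselves} in $\mb F\mc G(n)/\mc R(n)$, namely Lemma \ref{lem:identity}: $\Gamma_n+(-1)^m\sum_{\pi\in\mc M_n^m}\pi\Gamma_n\in\mc R(n)$. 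This is the combinatorial heart of the argument and is not a routine sign check; it is imported from \cite[Lemma 4.8]{BDSKV19}. Relatedly, your ``reduction to forests'' is not enough: a forest component can branch, and the reduction of arbitrary acyclic graphs to (proper) $\ul k$-\emph{lines} is Theorem \ref{thm:lines}, which is also a cited external result, not something that falls out of rotating orientations around cycles.

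Second, for surjectivity you assert that any $F\in C^s_{\sym,\Har}$ ``extends uniquely to a cochain on all forests by imposing these symmetries, and then to arbitrary graphs by inverting the cycle relation.'' The delicate point is well-definedness: after defining $Y^{\tau(\Gamma_{\ul k})}$ by formula \eqref{20201022:eq2} on the basis of proper $\ul k$-lines, one must verify that the resulting $Y$ satisfies the full $S_n$-skewsymmetry \eqref{eq:actclassicoperad} — i.e., that the Harrison conditions on $F$ are exactly what is needed for consistency of the assignment across all permutations of a single line. The paper reduces this to the $s=1$ case and invokes \cite[Lemma 4.9]{BDSKV19}; your sketch leaves it as an unexamined ``combinatorial check.'' Both gaps are fillable (and are filled in the paper by the two cited lemmas), but as written your argument attributes the Harrison conditions to the wrong mechanism and omits the consistency verification that makes the inverse map exist.
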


Consequently, Theorem \ref{thm:1.1} follows from Theorem 1.2
and the following vanishing theorem for the sesquilinear Harrison cohomology
(see Theorem \ref{thm:reimundo}).
\begin{theorem}\label{thm:1.3}
Let $\mc V$ be a finitely-generated algebra of differential polynomials.
Then 
$$
H^n(C^s_{\sym,\Har}(\mc V,\mc V),d)
=0 
\quad
\text{ for }
\quad
1\leq s<n
\,.
$$
\end{theorem}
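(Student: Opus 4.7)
\emph{Proof proposal.} The plan is to construct, for $1\le s<n$, a contracting homotopy on $C^s_{\sym,\Har}(\mc V,\mc V)$ in cohomological degree $n$. The guiding principle is to bootstrap the classical Barr--Harrison theorem: for a polynomial algebra $A$ over a field of characteristic zero and a symmetric $A$-bimodule $M$, the Harrison cohomology $H^n_{\Har}(A,M)$ vanishes for $n\ge 2$. Since $\mc V=\mb F[u_i^{(m)}:1\le i\le \ell,\, m\ge 0]$ is a polynomial algebra in countably many generators, and $\mc V$ is a symmetric bimodule over itself, this classical vanishing applies to $\Har^\bullet(\mc V,\mc V)$ and, by extension of coefficients, to $\Har^\bullet(\mc V,\mc V[\Lambda_1,\ldots,\Lambda_s])$.

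The key combinatorial observation is that an $s$-sesquilinear $n$-cochain $F_{\Lambda_1,\ldots,\Lambda_s}$ decomposes according to compositions $\ul k=(k_1,\ldots,k_s)$ of $n$ into $s$ nonnegative parts, matching the block decomposition $v=v^1_{\ul k}\otimes\cdots\otimes v^s_{\ul k}$ underlying the sesquilinearity condition \eqref{eq:1.9}. The hypothesis $n>s$ is precisely the statement that every such composition has some block with $k_t\ge 2$, by pigeonhole; this is the arithmetic input that activates Harrison acyclicity within a single block.

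First, I would exploit sesquilinearity to reduce each cochain modulo $(\partial+\Lambda_1+\cdots+\Lambda_s)$, expressing $F$ through its restrictions to ``base'' tensors with the action of $\partial$ on the $t$-th block absorbed as $-\Lambda_t$. Second, on each $\ul k$-indexed piece I would invoke the classical Harrison contracting homotopy (for instance, the one obtained from the Eulerian-idempotent splitting of the bar complex) applied in a chosen block of size $\ge 2$, with bimodule coefficients allowed to depend polynomially on $\Lambda_1,\ldots,\Lambda_s$. Third, I would assemble these block-wise homotopies into a global operator $H$ on the whole sesquilinear Harrison complex, averaging over $S_s$ so as to remain inside the $S_s$-invariants defining $C^s_{\sym,\Har}$.

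The main obstacle, and the technical heart of the argument, is verifying that the assembled operator $H$ genuinely preserves the Harrison subcomplex (rather than only the sesquilinear Hochschild complex) and satisfies $dH+Hd=\mathrm{id}$. The Hochschild differential couples adjacent blocks: multiplying tensor factors that straddle the boundary between block $t$ and block $t+1$ produces cross-terms not captured by any single block-wise homotopy. Showing that these boundary terms either vanish by the Harrison shuffle relations or cancel between different compositions $\ul k$ after $S_s$-symmetrization is where the real work lies, and where the finite generation hypothesis on $\mc V$ as a differential algebra most naturally enters, probably via an induction or filtration by the order of derivatives of the generators $u_i^{(m)}$ appearing in a given cochain.
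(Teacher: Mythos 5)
Your pigeonhole observation ($n>s$ forces some block to have $k_t\ge 2$) is indeed the arithmetic input of the paper's proof, and the appeal to Barr--Harrison vanishing for polynomial algebras is in the right spirit. But there is a genuine gap: you treat the sesquilinearity conditions \eqref{eq:1.9} as a harmless ``extension of coefficients'' to $\mc V[\Lambda_1,\dots,\Lambda_s]$, when in fact they are an $\mb F[\partial_1,\dots,\partial_s]$-equivariance constraint on the cochains, and passing from ordinary Hochschild/Harrison cohomology to its differential (sesquilinear) version is \emph{not} cohomologically transparent. Concretely, the sesquilinear complex computes $R\Hom_{\mb F[\partial_1,\dots,\partial_s]}(\mb F,-)$ of the ordinary one, and this produces derived-functor corrections (already for $s=1$ the differential Hochschild $H^1$ acquires an extra summand $A/\partial A$). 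Killing these corrections in the relevant degrees is exactly where the hypothesis that $\mc V$ is an algebra of differential polynomials enters: one needs the augmentation ideal $\mc V_+$, and hence $\mc V_+^{\otimes k}$, to be \emph{free} as an $\mb F[\partial]$-module (Lemma \ref{lem:a+isfree}), so that the relevant $\Ext^1_{\mb F[\partial_1,\dots,\partial_s]}$ groups vanish. Your proposal neither proves nor identifies this freeness; without it, a block-wise Harrison homotopy on $\Hom(A^{\otimes n},M[\Lambda_1,\dots,\Lambda_s])$ has no reason to descend to the sesquilinear subquotient, and the homotopy identity $dH+Hd=\mathrm{id}$ cannot even be formulated there.

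Two further points. First, the obstacle you flag as the ``technical heart'' --- products straddling adjacent blocks --- does not occur: the sesquilinear differential is $d=\sum_t d^{(t)}$ with anticommuting $d^{(t)}$ each acting entirely within block $t$ (see \eqref{20200623:eq4}), so the complex is the total complex of an $s$-fold multicomplex. The genuine assembly problem is different: the block-wise Harrison complexes fail to be acyclic in degrees $0$ and $1$, so homotopies applied in ``a chosen block of size $\ge 2$'' (a choice depending on $\ul k$) do not splice into a contracting homotopy of the total complex without a further spectral-sequence-type argument. The paper sidesteps both issues by a different route: it proves a sesquilinear HKR theorem (Theorems \ref{thm:pasha} and \ref{thm:sesquihkr}, via the Koszul resolution of $\mb F$ over $\mb F[\partial_1,\dots,\partial_s]$ and the freeness lemma), which concentrates all of $H^n(C^{s,\bullet}_\partial(A,M))$ in the Eulerian-idempotent Hodge components indexed by $\ul i=\ul k$ with $k_1+\dots+k_s=n$; since the Harrison part is the component $\ul i=(1,\dots,1)$ and $s<n$, it must vanish. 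You would need to either reconstruct that derived/Hodge-theoretic machinery or supply an explicit $\partial$-equivariant homotopy together with a proof that the derived corrections vanish --- neither of which the proposal currently contains.
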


In order to simplify the exposition, we restricted to the purely even case. However, 
the same proofs work in the super case.
Namely, Theorem \ref{thm:1.2} holds for any Poisson vertex superalgebra $\mc V$,
while Theorems \ref{thm:1.1} and \ref{thm:1.3}
hold if $\mc V$ is a superalgebra of differential polynomials
in finitely many commuting and anticommuting indeterminates.

Throughout the paper, the base field $\mb{F}$ has characteristic 0, and, unless
otherwise specified, all vector spaces, their tensor products and Homs are over $\mb{F}$.

\subsubsection*{Acknowledgments} 

This research was partially conducted during the authors' visits
to the University of Rome La Sapienza, to MIT, and to IHES.
The first author was supported in part by a Simons Foundation grant 584741.
The second author was partially supported by the national PRIN fund n. 2015ZWST2C$\_$001
and the University funds n. RM116154CB35DFD3 and RM11715C7FB74D63.
The third author was partially supported by CPNq grant 409582/2016-0.
The fourth author was partially supported by 
the Bert and Ann Kostant fund and by a Simons Fellowship.
We would like to thank Pavel Etingof for providing a proof of the differential
HKR theorem for the algebra of differential polynomials. 

\section{Variational PVA cohomology}\label{sec:2}

\subsection{Poisson vertex algebras}\label{sec:2.1}

\begin{definition}\label{def:pva}
A \emph{Poisson vertex algebra} (PVA) is a differential algebra $\mc V$,
i.e. a commutative associative unital algebra with a derivation $\partial$,
endowed with a bilinear (over $\mb F$) $\lambda$-bracket 
$[\cdot\,_\lambda\,\cdot]\colon \mc V\times \mc V\to \mc V[\lambda]$
satisfying:
\begin{enumerate}[(i)]
\item
sesquilinearity:
$[\partial a_\lambda b]=-\lambda[a_\lambda b]$,
$[a_\lambda \partial b]=(\lambda+\partial)[a_\lambda b]$;
\item
skewsymmetry:
$[a_\lambda b]=-[b_{-\lambda-\partial}a]$,
where $\partial$ is moved to the left to act on coefficients;
\item
Jacobi identity:
$[a_{\lambda}[b_\mu c]]-[b_\mu [a_\lambda ,b]]=[[a_\lambda b]_{\lambda+\mu}c]$.
\item
left Leibniz rule
$[a_{\lambda} bc] = [a_\lambda b]c + [a_\lambda c]b$.
\end{enumerate}
\end{definition}

From the skewsymmetry (ii) and left Leibniz rule (iv) we immediately get the 
\begin{enumerate}[(i')]
\setcounter{enumi}{3}
\item
right Leibniz rule
$[ab_{\lambda} c] = [a_{\lambda+\partial} c]_\to b + [b_{\lambda+\partial} c]_\to a$,
\end{enumerate}
where the arrow means that $\partial$ is moved to the right, acting on $b$ in the first term,
and on $a$ in the second term.

\subsection{Variational PVA complex}\label{sec:2.2}

Given a Poisson vertex algebra $\mc V$, the corresponding 
\emph{variational PVA cohomology complex} $(C_{\PV},d)$
is constructed as follows \cite{DSK13}; see also \cite{BDSK19}.
The space $C_{\PV}^n$ of $n$-cochains consists of linear maps
\begin{equation}\label{eq:lca-maps}
f\colon
\mc V^{\otimes n}
\longrightarrow
\mc V[\lambda_1,\dots,\lambda_n]/\langle\partial+\lambda_1+\cdots+\lambda_n\rangle
\,,
\end{equation}
where $\langle\Phi\rangle$ denotes the image of the endomorphism $\Phi$,
satisfying the \emph{sesquilinearity} conditions ($1\leq i\leq n$):
\begin{equation}\label{eq:sesq}
f_{\lambda_1,\dots,\lambda_n}(v_1\otimes\cdots\otimes (\partial v_i) 
\otimes\cdots\otimes v_n)
=
-\lambda_if_{\lambda_1,\dots,\lambda_n}(v_1\otimes\cdots\otimes v_n)
\,,
\end{equation}
the \emph{skewsymmetry} conditions ($1\leq i< n$):
\begin{equation}\label{eq:skew}
\begin{split}
& f_{\lambda_1,\dots,\lambda_i,\lambda_{i+1},\dots,\lambda_n}
(v_1\otimes\cdots\otimes v_i\otimes v_{i+1} \otimes\cdots\otimes v_n) \\
& =
- f_{\lambda_1,\dots,\lambda_{i+1},\lambda_i,\dots,\lambda_n}
(v_1\otimes\cdots\otimes v_{i+1}\otimes v_i \otimes\cdots\otimes v_n)
\,,
\end{split}
\end{equation}
and the \emph{Leibniz rules} ($1\leq i\leq n$):
\begin{equation}\label{eq:leib}
\begin{split}
\vphantom{\Big(} 
f_{\lambda_1,\dots,\lambda_n}
&(v_1,\dots,u_iw_i,\dots,v_n)
=
f_{\lambda_1,\dots,\lambda_i+\partial,\dots,\lambda_n}(v_1,\dots,u_i,\dots,v_n)_\to w_i \\
& \vphantom{\Big(}
+
f_{\lambda_1,\dots,\lambda_i+\partial,\dots,\lambda_n}(v_1,\dots,w_i,\dots,v_n)_\to u_i
\,.
\end{split}
\end{equation}
For example, 
$C_{\PV}^0=\mc V/\partial \mc V$ and 
$C_{\PV}^1=\Der^{\partial}(\mc V)$ is the space of all derivations of $\mc V$ commuting with $\partial$.

The variational PVA differential
$d\colon C_{\PV}^n\to C_{\PV}^{n+1}$, for $n\geq0$,
is defined by 
\begin{equation}\label{eq:lca-d}
\begin{split}
(df&)_{\lambda_1,\dots,\lambda_{n+1}}
(v_1\otimes\cdots\otimes v_{n+1}) 
=
(-1)^{n}
\sum_{i=1}^{n+1}
(-1)^{i}\,
\big[
{v_i}_{\lambda_i}
f_{\lambda_1,\stackrel{i}{\check{\dots}},\lambda_{n+1}}
(v_1\otimes\stackrel{i}{\check{\dots}}\otimes v_{n+1}) 
\big]
\\
& +
(-1)^{n+1}
\sum_{1\leq i<j\leq n+1}
\!\!\!
(-1)^{i+j}\,
f_{\lambda_i+\lambda_j,\lambda_1,
\stackrel{i}{\check{\dots}}\stackrel{j}{\check{\dots}},\lambda_{n+1}}
([{v_i}_{\lambda_i}v_j]\otimes v_1\otimes
\stackrel{i}{\check{\dots}}\stackrel{j}{\check{\dots}}
\otimes v_{n+1}) 
\,.
\end{split}
\end{equation}
One shows that $d^2=0$, hence we can define the \emph{variational PVA cohomology}
\begin{equation}\label{eq:h-pva}
H_{\PV}(\mc V)
=
\bigoplus_{n\geq0}
H_{\PV}^n(\mc V)
\,\,,\,\,\,\,
H_{\PV}^n(\mc V)
=
\ker\big(d|_{C_{\PV}^{n}}\big)
/
d\big(C_{\PV}^{n-1}\big)
\,.
\end{equation}
\begin{remark}\label{rem:Wnotation}
It was shown in \cite{DSK13} and \cite{BDSHK18},
that the variational PVA cohomology complex associated to the PVA $\mc V$ 
has the structure of a $\mb Z$-graded Lie superalgebra.
The element $X\in
C^2_{\PV}$,
given by
 \begin{equation}\label{eq:PVAstructure1}
X_{\lambda,-\lambda-\partial}(a\otimes b)
=
[a_\lambda b]
\,,
\end{equation}
is odd and satisfies $[X,X] = 0$.
Hence, $(\ad X)^2 = 0$, and $d = \ad X$ was taken as the
differential of the variational PVA cohomology complex. 
As a consequence, the variational PVA cohomology $H_{\PV}(\mc V)$
has an induced Lie superalgebra structure.
Actually, 
what we call here variational PVA cohomology
was called in \cite{DSK13} PVA cohomology;
the variational PVA cohomology was a subcomplex there,
which is equal to the PVA cohomology if $\mc V$ is an algebra of differential polynomials. 
\end{remark}

\section{Preliminaries on the symmetric group and on graphs}\label{sec:3a}

\subsection{Shuffles}\label{sec:shuffles}

A permutation $\sigma \in S_{m+n}$ is called an $(m,n)$-\emph{shuffle} if 
$$
\si(1)< \dots < \si(m)\,,\quad \si(m+1)< \dots <\si(m+n)\,.
$$
The subset of $(m,n)$-shuffles is denoted by $S_{m,n}\subset S_{m+n}$. 
Observe that, by definition, $S_{0,n}=S_{n,0}=\{1\}$ for every $n\geq 0$. If either $m$ or $n$ is negative, 
we set $S_{m,n}= \emptyset$ by convention. 

%
%

\subsection{Monotone permutations} \label{ssec:monotone}

The following notion is due to Harrison \cite{Har62} (see also \cite{GS87}),
and it will be used in Section \ref{sec:5} to define Harrison cohomology.
\begin{definition}
A permutation $\pi \in S_n$ is called \emph{monotone} if, for each $i=1,\dots , n$, 
one of the following two conditions holds:
\begin{enumerate}[(a)]
\item 
$\pi(j)<\pi(i)$ for all $j<i$;
\item
$\pi(j)>\pi(i)$ for all $j<i$.
\end{enumerate}
(Not necessarily the same condition (a) or (b) holds for every $i$.) When (b) holds, 
we call $i$ a \emph{drop} of $\pi$. Also, $\pi(1)=k$ is called the \emph{start} of $\pi$
(and we say that $\pi$ \emph{starts} at $k$).
\end{definition}

We denote by $\mc M_n\subset S_n$ the set of monotone permutations, 
and by $\mc M_n^k\subset \mc M_n$  the set of monotone permutations starting at $k$.

Here is a simple description of all monotone permutations starting at $k$. 
Let us identify the permutation $\pi\in S_n$ with the $n$-tuple $[\pi(1),\dots,\pi(n)]$. 
To construct all $\pi\in\mc M_n^k$, we let $\pi(1)=k$.
Then, for every choice of $k-1$ positions in $\{2,\dots,n\}$ 
we get a monotone permutation $\pi$ as follows. 
In the selected positions we put the numbers $1$ to $k-1$ 
in decreasing order from left to right; in the remaining positions we write the numbers $k+1$ to $n$ 
in increasing order from left to right.
(The selected positions are the drops of $\pi$.)
\begin{example}
The only monotone permutation starting at $1$ is the identity, 
while the only monotone permutation starting at $n$ is 
\begin{equation}\label{eq:sigman}
\sigma_n=[n\;\;n-1\;\cdots\;2\;\;1]\,.
\end{equation}
\end{example}
\begin{example}
Let $n=5$ and $k=3$. The monotone permutations starting at $3$ are 
\begin{align*}
& [3\;\;\ul{2}\;\;\ul{1}\;\;4\;\;5]\,,\quad
[3\;\;\ul{2}\;\;4\;\;\ul{1}\;\;5]\,,\quad
[3\;\;\ul{2}\;\;4\;\;5\;\;\ul{1}] \,,\\
& [3\;\;4\;\;\ul{2}\;\;\ul{1}\;\;5] \,,\quad
[3\;\;4\;\;\ul{2}\;\;5\;\;\ul{1}] \,,\quad
[3\;\;4\;\;5\;\;\ul{2}\;\;\ul{1}] \,,
\end{align*}
where we underlined the positions of the drops.
\end{example}
Given a monotone permutation $\pi$, we denote by 
$\dr(\pi)$ the sum of all the drops with respect to $\pi$. 
According to the previous description, we can easily see that 
\begin{equation} \label{eq:drop}
(-1)^{\dr(\pi)}=(-1)^{k-1} \sign (\pi)\,,
\end{equation} 
if $k$ is the start of $\pi$.

\subsection{Graphs}\label{ssec:ngraphs}

For an oriented graph $\Gamma$, we denoted by $V(\Gamma)$ the set of vertices of $\Ga$, 
and by $E(\Gamma)$ the set of edges. 
We call an oriented graph $\Ga$ an $n$-\emph{graph} 
if $V(\Gamma)=\{1, \dots, n\}$. Denote by $\mc G(n)$ the set of all $n$-graphs without tadpoles, 
and by $\mc G_0(n)$ the set of all acyclic $n$-graphs. 

An $n$-graph $L$ will be called an $n$-\emph{line}, or simply a \emph{line}, 
if its set of edges is of the form $\{i_1\to i_2,\,i_2\to i_3,\dots,\,i_{n-1}\to i_n\}$,
where $\{i_1,\dots,i_n\}$ is a permutation of $\{1,\dots,n\}$.

We have a natural left action of $S_n$ on the set $\mc G(n)$: 
for the $n$-graph $\Ga$ and the permutation $\sigma$, the new $n$-graph $\sigma(\Ga)$ 
is defined to be the  same graph as $\Ga$ but with the vertex which was labeled as $i$ relabeled 
as $\sigma(i)$, for every $i=1,\dots,n$. 
So, if the $n$-graph $\Ga$ has an oriented edge $i \rightarrow j$, 
then the $n$-graph $\sigma(\Ga)$ has the oriented edge $\sigma(i) \rightarrow \sigma(j)$. 
Obviously, $S_n$ permutes the set of $n$-lines.

\begin{example}\label{ex:pictures}
Let
	\begin{equation*}
	\begin{tikzpicture}
	\node at (-1,0) {$\Gamma=$};
	\draw[fill] (0,0) circle [radius=0.1];
	\node at (0,-0.3) {1};
	\draw[fill] (1,0) circle [radius=0.1];
	\node at (1,-0.3) {2};
	\draw[fill] (2,0) circle [radius=0.1];
	\node at (2,-0.3) {3};
	\draw[fill] (3,0) circle [radius=0.1];
	\node at (3,-0.3) {4};
	\draw[fill] (4,0) circle [radius=0.1];
	\node at (4,-0.3) {5};
	\draw[fill] (5,0) circle [radius=0.1];
	\node at (5,-0.3) {6};
	\draw[->] (0.1,0) -- (0.9,0);
	\draw[->] (0,-0.1) to [out=270,in=270] (2,-0.1);
	\draw[->] (1,0.1) to [out=90,in=90] (3,0.1);
	\draw[->] (1,0.1) to [out=90,in=90] (5,0.1);
	\node at (5.3,0) {.};
	\end{tikzpicture}
	\end{equation*}
For $\sigma=(6\;5\;4)$ and $\tau = \left(\begin{array}{cc|cccccccccc}
	1 & 2  & 3 & 4 & 5 & 6  \\
	3 & 4 & 1 & 5 & 6 & 2 
	\end{array} \right)$, 
we have:
	\begin{equation*}
	\begin{tikzpicture}
	\node at (-0.6,0) {$\sigma\Bigg($};
	\draw[fill] (0,0) circle [radius=0.1];
	\node at (0,-0.3) {1};
	\draw[fill] (1,0) circle [radius=0.1];
	\node at (1,-0.3) {2};
	\draw[fill] (2,0) circle [radius=0.1];
	\node at (2,-0.3) {3};
	\draw[fill,red] (3,0) circle [radius=0.1];
	\node at (3,-0.3) {4};
	\draw[fill] (4,0) circle [radius=0.1];
	\node at (4,-0.3) {5};
	\draw[fill] (5,0) circle [radius=0.1];
	\node at (5,-0.3) {6};
	\draw[->] (0.1,0) -- (0.9,0);
	\draw[->] (0,-0.1) to [out=270,in=270] (2,-0.1);
	\draw[->] (1,0.1) to [out=90,in=90] (3,0.1);
	\draw[->] (1,0.1) to [out=90,in=90] (5,0.1);
	\end{tikzpicture}
	\begin{tikzpicture}
	\node at (-0.5,0) {\Bigg)=};
	\draw[fill] (0,0) circle [radius=0.1];
	\node at (0,-0.3) {1};
	\draw[fill] (1,0) circle [radius=0.1];
	\node at (1,-0.3) {2};
	\draw[fill] (2,0) circle [radius=0.1];
	\node at (2,-0.3) {3};
	\draw[fill] (3,0) circle [radius=0.1];
	\node at (3,-0.3) {4};
	\draw[fill] (4,0) circle [radius=0.1];
	\node at (4,-0.3) {5};
	\draw[fill,red] (5,0) circle [radius=0.1];
	\node at (5,-0.3) {6};
	\draw[->] (0.1,0) -- (0.9,0);
	\draw[->] (0,-0.1) to [out=270,in=270] (2,-0.1);
	\draw[->] (1,0.1) to [out=90,in=90] (4,0.1);
	\draw[->] (1,0.1) to [out=90,in=90] (5,0.1);
	\node at (5.3,0) {,};
	\end{tikzpicture}
	\end{equation*}
	and
	\begin{equation*}
	\begin{tikzpicture}
	\node at (-0.6,0) {$\tau\Bigg($};
	\draw[fill] (0,0) circle [radius=0.1];
	\node at (0,-0.3) {1};
	\draw[fill] (1,0) circle [radius=0.1];
	\node at (1,-0.3) {2};
	\draw[fill,red] (2,0) circle [radius=0.1];
	\node at (2,-0.3) {3};
	\draw[fill] (3,0) circle [radius=0.1];
	\node at (3,-0.3) {4};
	\draw[fill] (4,0) circle [radius=0.1];
	\node at (4,-0.3) {5};
	\draw[fill,green] (5,0) circle [radius=0.1];
	\node at (5,-0.3) {6};
	\draw[->] (0.1,0) -- (0.9,0);
	\draw[->] (0,-0.1) to [out=270,in=270] (2,-0.1);
	\draw[->] (1,0.1) to [out=90,in=90] (3,0.1);
	\draw[->] (1,0.1) to [out=90,in=90] (5,0.1);
	\end{tikzpicture}
	\begin{tikzpicture}
	\node at (-0.5,0) {\Bigg)=};
	\draw[fill,red] (0,0) circle [radius=0.1];
	\node at (0,-0.3) {1};
	\draw[fill,green] (1,0) circle [radius=0.1];
	\node at (1,-0.3) {2};
	\draw[fill] (2,0) circle [radius=0.1];
	\node at (2,-0.3) {3};
	\draw[fill] (3,0) circle [radius=0.1];
	\node at (3,-0.3) {4};
	\draw[fill] (4,0) circle [radius=0.1];
	\node at (4,-0.3) {5};
	\draw[fill] (5,0) circle [radius=0.1];
	\node at (5,-0.3) {6};
	\draw[->] (2.1,0) -- (2.9,0);
	\draw[<-] (0,-0.1) to [out=270,in=270] (2,-0.1);
	\draw[->] (3.1,0) -- (3.9,0);
	\draw[<-] (1,0.1) to [out=90,in=90] (3,0.1);
	\node at (5.3,0) {.};
	\end{tikzpicture}
	\end{equation*}
\end{example}

\subsection{Graphs of type $\ul k$ and proper $\ul k$-lines} \label{ssec:lines}

For $s\geq1$, let 
$$
\ul k=(k_1,\dots,k_s)\,\in\mb Z_{\geq0}^s
\,\,\text{ and }\,\,
n=k_1+\dots+k_s
\,,
$$
and denote
\begin{equation} \label{eq:notation}
K_0=0
\,\,\text{ and }\,\,
K_t=k_1+\dots+k_t\,, \quad t=1,\dots,s\,,
\end{equation}
so that $K_s=n$.
We denote by $\Gamma_{\ul k}\in\mc G(n)$ 
the \emph{standard} $\ul k$-\emph{line}, 
union of connected lines of lengths $k_1,\dots,k_s$, 
with the labeling of the vertices ordered from left to right:
\begin{equation}\label{eq:Gammak}
\begin{tikzpicture}
\node at (-0.2,1) {$\Gamma_{\ul{k}}=$};
\draw[fill] (0.5,1) circle [radius=0.07];
\node at (0.5,0.6) {\tiny{$1$}};
\draw[->] (0.6,1) -- (0.9,1);
\draw[fill] (1,1) circle [radius=0.07];
\node at (1,0.6) {\tiny{$2$}};
\draw[->] (1.1,1) -- (1.4,1);
\node at (1.7,1) {$\cdots$};
\draw[->] (1.9,1) -- (2.2,1);
\draw[fill] (2.3,1) circle [radius=0.07];
\node at (2.3,0.6) {\tiny{$K_1$}};
\draw[fill] (3,1) circle [radius=0.07];
\node at (3,0.6) {\tiny{$K_1\!+\!1$}};
\draw[->] (3.1,1) -- (3.4,1);
\draw[fill] (3.5,1) circle [radius=0.07];
\draw[->] (3.6,1) -- (3.9,1);
\node at (4.2,1) {$\cdots$};
\draw[->] (4.4,1) -- (4.7,1);
\draw[fill] (4.8,1) circle [radius=0.07];
\node at (4.8,0.6) {\tiny{$K_2$}};
\node at (6,1) {$\cdots$};
\draw[fill] (7.2,1) circle [radius=0.07];
\node at (7.2,0.6) {\tiny{$K_{s-1}\!+\!1$}};
\draw[->] (7.3,1) -- (7.6,1);
\node at (7.9,1) {$\cdots$};
\draw[->] (8.2,1) -- (8.5,1);
\draw[fill] (8.6,1) circle [radius=0.07];
\node at (8.6,0.6) {\tiny{$n$}};
%
\end{tikzpicture}
\end{equation}
We allow some of the $k_i$'s to be zero,
in which case the corresponding connected component of $\Gamma_{\ul k}$ is empty.
In the special case $s=1$ we recover
the \emph{standard} $n$-\emph{line}
\begin{equation} \label{eq:Ln}
\begin{tikzpicture}[scale=1.5]
\node at (-0.5,0){$\Ga_n=$};
\draw[fill] (0,0) circle [radius=0.02];
\node at (0,-0.2){\tiny{$1$}};
\draw[->] (0,0) to (0.3,0);
\draw[fill] (0.3,0) circle [radius=0.02];
\node at (0.3,-0.2){\tiny{$2$}};
\draw[->] (0.3,0) to (0.6,0);
\node at (0.7,0) {.};
\node at (0.75,0) {.};
\node at (0.8,0) {.};
\draw[->] (0.9,0) to (1.2,0);
\draw[fill] (1.2,0) circle [radius=0.02];
\node at (1.2,-0.2){\tiny{$n$}};
\node at (1.5,0) {.};
\end{tikzpicture}
\end{equation}

An arbitrary $\ul k$-\emph{line} is obtained by permuting the vertices of $\Gamma_{\ul k}$:
\begin{equation}\label{eq:unionlines}
\begin{tikzpicture}
\node at (-0.2,1) {$\Gamma=$};
\draw[fill] (0.5,1) circle [radius=0.07];
\node at (0.5,0.6) {$i^1_1$};
\draw[->] (0.6,1) -- (0.9,1);
\draw[fill] (1,1) circle [radius=0.07];
\node at (1,0.6) {$i^1_2$};
\draw[->] (1.1,1) -- (1.4,1);
\node at (1.7,1) {$\cdots$};
\draw[->] (1.9,1) -- (2.2,1);
\draw[fill] (2.3,1) circle [radius=0.07];
\node at (2.3,0.6) {$i^1_{k_1}$};
\draw[fill] (3,1) circle [radius=0.07];
\node at (3,0.6) {$i^2_1$};
\draw[->] (3.1,1) -- (3.4,1);
\draw[fill] (3.5,1) circle [radius=0.07];
\node at (3.5,0.6) {$i^2_2$};
\draw[->] (3.6,1) -- (3.9,1);
\node at (4.2,1) {$\cdots$};
\draw[->] (4.4,1) -- (4.7,1);
\draw[fill] (4.8,1) circle [radius=0.07];
\node at (4.8,0.6) {$i^2_{k_2}$};
\node at (5.45,1) {$\cdots$};
\draw[fill] (6,1) circle [radius=0.07];
\node at (6,0.6) {$i^s_1$};
\draw[->] (6.1,1) -- (6.4,1);
\draw[fill] (6.5,1) circle [radius=0.07];
\node at (6.5,0.6) {$i^s_2$};
\draw[->] (6.6,1) -- (6.9,1);
\node at (7.2,1) {$\cdots$};
\draw[->] (7.4,1) -- (7.7,1);
\draw[fill] (7.8,1) circle [radius=0.07];
\node at (7.8,0.6) {$i^s_{k_s}$};
%
\end{tikzpicture}
\end{equation}
where the set of indices $\{i^a_b\}$ is a permutation of $\{1,\dots,n\}$.
Note that, if $\Gamma$ is a $\ul k$-line,
then it is a $\sigma(\ul k)$-line for every permutation $\sigma\in S_s$.
Hence, when considering $\ul k$-lines we can (and we will) assume that
$k_1\leq\dots\leq k_s$.
We say that a $\ul k$-line is \emph{proper} if the following further condition
holds on the indices of the vertices:
\begin{equation}\label{eq:min}
i^l_1=\min\{i_1^l,\dots,i_{k_l}^l\} \;\;\;\forall\;l=1,\dots,s\,.
\end{equation}
We then let
\begin{equation}\label{eq:mcLn}
\mc L(n)
\,=\,
\Big\{\text{proper $\ul k$-lines }\, \Gamma\in\mc G(n)\,\text{ with }\, 
\ul k\in\mb Z_{\geq1}^s,\, s\geq1,\, k_1+\dots+k_s=n
\Big\}
\,.
\end{equation}
Note that, in order not to have repetitions in the set \eqref{eq:mcLn},
we may assume that $k_1\leq\dots\leq k_s$,
and that, if $k_l=k_{l+1}$, then $i_1^l<i_1^{l+1}$.
Obviously, $\Gamma_{\ul k}\in\mc L(n)$ for every $\ul k\in\mb Z_{\geq1}^s$,
while a permutation of $\Gamma_{\ul k}$ does not necessarily lie in $\mc L(n)$.

Finally, we say that a graph $\Gamma\in\mc G(n)$ is of \emph{type} $\ul k$
if it is disjoint union of $s$ connected components of sizes $k_1\leq\dots\leq k_s$.
Obviously, any $\ul k$-line is of type $\ul k$.

We can extend the definition of $\Gamma_{\ul k}$ for $\ul k\in\mb Z_{\geq0}^s$
by removing all $0$'s from $\ul k$.
In particular $\Gamma_0$ is the empty graph.

\subsection{Cycle relations on graphs} \label{ssec:cyclerel}

Let $\mb F\mc G (n)$ be the vector space 
with basis the set of graphs $\mc{G}(n)$,
and $\mc R(n)\subset\mb F\mc G(n)$ be the subspace spanned 
by the following \emph{cycle relations}:
\begin{enumerate}[(i)]
\item\label{eq:cyclerel1}
all $\Ga\in \mc G(n)\setminus \mc{G}_{0}(n)$ (i.e., graphs containing a cycle); 
\item\label{eq:cyclerel2}
all linear combinations $\sum_{e\in C} \Ga\setminus e$, where $\Ga\in \mc{G}(n)$ 
and $C\subset E(\Ga)$ is an oriented cycle. 
\end{enumerate}
By convention, $\mb F\mc G(0)=\mb F$ and $\mc R(0)=0$.

Note that reversing an arrow in a graph $\Ga\in \mc{G}(n)$ gives us, modulo cycle relations, 
the element $-\Ga\in \mb{F}\mc{G}(n)$.
For example, for $n=3$, a cycle relation of type \eqref{eq:cyclerel2} is:
\begin{equation}\label{eq:triangle}
\begin{tikzpicture}
\node at (0,0) {$2$};
\node at (1.4,0) {$3$};
\node at (0.7,1) {$1$};
\draw[->] (0.6,0.9) to (0.1,0.1);
\draw[->] (0.1,0) to (1.3,0);
\end{tikzpicture}
\genfrac{}{}{0pt}{}{\;\;+\;\;}{}
\begin{tikzpicture}
\node at (0,0) {$2$};
\node at (1.4,0) {$3$};
\node at (0.7,1) {$1$};
\draw[->] (0.1,0) to (1.3,0);
\draw[<-] (0.8,0.9) to (1.3,0.1);
\end{tikzpicture}
\genfrac{}{}{0pt}{}{\;\;+\;\;}{}
\begin{tikzpicture}
\node at (0,0) {$2$};
\node at (1.4,0) {$3$};
\node at (0.7,1) {$1$};
\draw[<-] (0.8,0.9) to (1.3,0.1);
\draw[->] (0.6,0.9) to (0.1,0.1);
\end{tikzpicture}
\end{equation}
\begin{theorem}[{\cite[Theorem\ 4.7]{BDSHK19}}] \label{thm:lines}
The set\/ $\mathcal{L}(n)$ is a basis for the quotient space\/ $\mb{F}\mc{G}(n)/\mc R(n)$.
\end{theorem}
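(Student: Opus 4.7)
The plan is to prove the theorem in two parts: show that $\mathcal{L}(n)$ spans $\mathbb{F}\mathcal{G}(n)/\mathcal{R}(n)$, and then show linear independence. A preliminary observation already noted in the text is crucial: reversing a single edge negates a graph modulo $\mathcal{R}(n)$. Indeed, if $\Gamma'$ is obtained from $\Gamma$ by reversing $i\to j$ to $j\to i$, then the cycle relation (ii) applied to the 2-cycle $\{i\to j,\,j\to i\}$ in the graph $\Gamma\cup\{j\to i\}$ yields $\Gamma+\Gamma'\equiv 0$. This reduces every orientation choice to a sign.

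For the spanning part I would proceed by three successive reductions. First, relation (i) annihilates every graph containing an oriented cycle, so we may restrict attention to acyclic graphs. Second, and this is the main technical step, I would reduce any acyclic graph to a linear combination of disjoint unions of directed paths. The natural induction is on a branching complexity such as $\sum_v \bigl( \max(\deg^+(v)-1,0) + \max(\deg^-(v)-1,0) \bigr)$. At a branching vertex $v$ with, say, two outgoing edges $v\to a$ and $v\to b$, one forms an auxiliary graph by adding a well-chosen edge (for instance $a\to b$, if this creates no other cycle; otherwise one uses an existing directed path between $a$ and $b$ in $\Gamma$ to close a longer cycle) and applies relation (ii) to the resulting cycle. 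The identity thereby obtained expresses $\Gamma$ as a combination of graphs of strictly smaller branching complexity, together with graphs containing cycles that vanish by (i). Third, once the graph is a disjoint union of directed paths, the edge-reversal sign rule orients each path to start at its minimum-labeled vertex, while permuting the components (a relabeling of the basis vector) puts them in the order $k_1\leq\dots\leq k_s$ with the tie-breaking condition on starts, producing a proper line up to a global sign.

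For linear independence, the cleanest strategy is a dimension count: enumerate $|\mathcal{L}(n)|$ combinatorially (each proper line of type $\underline{k}=(k_1,\dots,k_s)$ contributes $\binom{n}{k_1,\dots,k_s}\prod_l (k_l-1)!$, divided by the symmetry factor accounting for repeated part sizes) and compare with an independent computation of $\dim\mathbb{F}\mathcal{G}(n)/\mathcal{R}(n)$ obtained by analyzing the rank of the system of type (ii) relations, for example via a filtration by number of edges. Alternatively, one can construct an explicit linear functional, or a map to an auxiliary operad such as the pre-Lie operad, that kills $\mathcal{R}(n)$ and separates the proper lines.

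The main obstacle is clearly the second reduction in the spanning argument. The difficulty is that at a branching vertex there is in general no canonical edge to add which both creates a cycle and leaves the rest of the graph acyclic; moreover, the triangle relation alone may not suffice in the presence of intertwined branchings. I expect that the argument in \cite{BDSHK19} proceeds by a careful (possibly lexicographic) induction on several parameters and uses cycle relations of variable length uniformly, treating the 2-cycle (edge reversal), the 3-cycle (triangle relation, as in \eqref{eq:triangle}), and longer-cycle cases on the same footing.
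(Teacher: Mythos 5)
You should note at the outset that the present paper does not prove this statement: Theorem \ref{thm:lines} is imported verbatim from [BDSHK19, Theorem 4.7], so there is no in-paper argument to compare yours against, and your outline has to stand on its own. Your preliminary observation (reversing one edge negates the class, via the $2$-cycle relation applied to $\Gamma\cup\{j\to i\}$) is correct and is exactly the remark made in Section \ref{ssec:cyclerel}. One reading issue: relation (i) kills all $\Gamma\notin\mc G_0(n)$, and in this paper ``acyclic'' must mean that the underlying \emph{undirected} graph is a forest (this is forced by the identity $s(\Gamma)=n-|E(\Gamma)|$ used in Section \ref{sec:6}), not merely that there is no directed cycle; your reduction to disjoint unions of paths tacitly relies on this stronger reading, so you should state it.

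There are two genuine gaps. First, the spanning argument: the induction on branching complexity does not close as set up. At a vertex $v$ with two outgoing edges $v\to a$, $v\to b$, the triangle relation on $\{v,a,b\}$ rewrites $\Gamma$, up to sign, as a sum of two forests in which $\deg(v)$ drops by one but $\deg(a)$ (respectively $\deg(b)$) \emph{increases} by one; if $a$ or $b$ already has degree $\geq 2$, your complexity $\sum_v\bigl(\max(\deg^+(v)-1,0)+\max(\deg^-(v)-1,0)\bigr)$ stays constant or grows. Producing a well-founded ordering (or a different normal-form algorithm) is precisely the content of the cited proof, and it is the step you have left open. Second, linear independence is not established at all: the proposed dimension count needs an a priori value of $\dim\mb F\mc G(n)/\mc R(n)$ ``by analyzing the rank of the system of type (ii) relations,'' which is exactly the statement being proved, so as written this is circular. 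Your enumeration of $\mc L(n)$ is correct and gives $|\mc L(n)|=n!$ (sum over set partitions of $\prod_l(k_l-1)!$), so the workable version of your alternative is to exhibit an explicit linear map out of $\mb F\mc G(n)$ that kills $\mc R(n)$ and sends the proper lines to linearly independent elements of a space of dimension $n!$ --- for instance by evaluating the classical operad of Section \ref{sec:3.4} on a suitable free Poisson (vertex) algebra, where a $\ul k$-line encodes a product of iterated brackets. Until such a separating construction is actually carried out, independence is unproved.
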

%

\subsection{Harrison relations} \label{ssec:harrel}

The following result will be used in Section \ref{sec:6}.
\begin{lemma}{\cite[Lemma 4.8]{BDSKV19}}\label{lem:identity}
	Let\/ $\Ga_n$ be the standard $n$-line, as in \eqref{eq:Ln}. 
	For every $m\in \{2,\dots,n\}$, the following identity holds:
	\begin{equation} \label{eq:identity}
	\Ga_n + (-1)^{m} \sum_{\pi\in \mc M_n^m} \pi \Ga_n \in \mc R(n)\,,
	\end{equation} 
	where the sum is over all monotone permutations $\pi$ starting at $m$ and the action of $S_n$
	on graphs is described in Section \ref{ssec:ngraphs}.
\end{lemma}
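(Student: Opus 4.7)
The plan is to derive the identity by applying a single carefully chosen cycle relation and then expanding the resulting branched graphs in the basis of proper lines guaranteed by Theorem \ref{thm:lines}.

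First I would consider the graph $G_m = \Ga_n \cup \{m\to 1\}$, obtained from the standard $n$-line by adjoining the back-edge $m\to 1$. It contains the unique directed $m$-cycle $C_m = \{1\to 2,\, 2\to 3,\, \dots,\, (m-1)\to m,\, m\to 1\}$, and applying the cycle relation \eqref{eq:cyclerel2} to $G_m$ along $C_m$ yields
$$
\Ga_n \,+\, \sum_{k=1}^{m-1} T_k \,\in\, \mc R(n),
$$
where $T_k := G_m\setminus(k\to k+1)$ is the tree-shaped graph consisting of the spine $(k+1)\to(k+2)\to\cdots\to n$ joined at vertex $m$ by a side-branch $m\to 1\to 2\to\cdots\to k$. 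The lemma thus reduces to showing that $\sum_{k=1}^{m-1} T_k$ is congruent to $(-1)^{m+1}\sum_{\pi\in\mc M_n^m}\pi\Ga_n$ modulo $\mc R(n)$.

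The next step is to expand each tree $T_k$ in the basis $\mc L(n)$. The strategy is to \emph{slide} the side-branch along the spine by iterated applications of 2-cycle relations: adjoining and then removing a back-edge between two adjacent spine vertices creates a 2-cycle whose relation, combined with \eqref{eq:cyclerel1} to discard graphs containing cycles, effectively translates the attachment vertex of the side-branch one step to the right along the spine at the cost of producing one line term as a residue. Iterating until the branch reaches the right end of the spine (at which point the graph itself becomes a line) and reorienting each emitted side-branch into a decreasing sequence (contributing a sign $(-1)^{k-1}$ from the $k-1$ edge reversals), one produces, up to sign, exactly one $\pi\Ga_n$ for each $\pi\in\mc M_n^m$ whose drop set encodes the starting tree $T_k$ together with the sequence of slides.

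The main obstacle is the combinatorial and sign bookkeeping in the sliding step: one must verify that every $\pi\in\mc M_n^m$ arises exactly once and with the correct sign, so that the aggregate matches $(-1)^{m+1}\sum_{\pi\in\mc M_n^m}\pi\Ga_n$. This is handled by identifying slide histories with $(m-1)$-element subsets of $\{2,\dots,n\}$ and matching them against the drop-set description of monotone permutations, using \eqref{eq:drop} to keep track of signs. A cleaner alternative would be an induction on $n$: split $\mc M_n^m$ according to whether $\pi(n)=1$ (these are in bijection with $\mc M_{n-1}^{m-1}$, accounting for the shift $(-1)^{m-1}\to(-1)^m$) or $\pi(n)=n$ (in bijection with $\mc M_{n-1}^m$), apply the inductive hypothesis on $n-1$ in each case, and combine the two lifted identities using a 2-cycle relation involving the vertex $n$ to glue them into the desired identity on $\mc R(n)$.
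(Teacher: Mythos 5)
The paper gives no proof of this lemma (it is imported from \cite{BDSKV19}, Lemma 4.8), so your argument must stand on its own, and as written it has a genuine gap. Your opening move is fine: applying the cycle relation of type (ii) from Section \ref{ssec:cyclerel} to $\Ga_n\cup\{m\to1\}$ along the oriented $m$-cycle $C_m$ does give $\Ga_n+\sum_{k=1}^{m-1}T_k\in\mc R(n)$ (note, though, that the resulting reduction target is $\sum_k T_k\equiv(-1)^{m}\sum_{\pi\in\mc M_n^m}\pi\Ga_n$, not $(-1)^{m+1}$; check $n=m=2$, where $T_1$ is the single edge $2\to1=\sigma_2\Ga_2$ on the nose and $(-1)^m=+1$). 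The gap is in the ``sliding'' step, which is where all the content of the lemma lives and which the mechanism you describe cannot perform. A $2$-cycle relation applied to an edge $i\to j$ (adjoin $j\to i$, apply relation (ii) to the $2$-cycle, discard the cyclic graph by relation (i)) says exactly that a graph equals minus the same graph with that edge reversed; it never changes the underlying undirected graph. But for $2<m<n$ the tree $T_1$ has a vertex of degree $3$ (the spine vertex $m$ carries the pendant edge $m\to1$ in addition to its two spine edges), so it is not an orientation of a path and no sequence of edge reversals can reduce it to a line --- let alone to the required signed sum of lines $\pi\Ga_n$. Indeed there are only $m-1$ trees $T_k$ but $\binom{n-1}{m-1}$ permutations in $\mc M_n^m$, so each tree must expand into many lines; what is actually needed is the decomposition, modulo $\mc R(n)$, of a wedge of two lines joined at a vertex as a signed sum over shuffles of the two branches, which requires repeated use of length-$3$ cycle relations and is essentially equivalent to the lemma itself. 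That identity is assumed, not proved, in your sketch.

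Your alternative induction on $n$ has the right combinatorial skeleton: the splitting $\mc M_n^m\simeq\mc M_{n-1}^{m-1}\sqcup\mc M_{n-1}^{m}$ according to whether $\pi(n)=1$ or $\pi(n)=n$ is correct. But the ``gluing'' step is not automatic: the lines $\pi\Ga_{n-1}$ occurring in the inductive identity terminate at different vertices, so there is no single $\mc R$-preserving linear map $\mb F\mc G(n-1)\to\mb F\mc G(n)$ of the form ``append an edge at the end of the line'' that you could apply to the whole relation in $\mc R(n-1)$ to produce the relation in $\mc R(n)$. Either route can be made to work, but only after proving an explicit auxiliary identity expressing a branched tree (two lines joined at a vertex) as a shuffle sum of lines modulo $\mc R(n)$, together with the sign bookkeeping via \eqref{eq:drop}; that auxiliary identity is the missing piece.
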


\subsection{Notation for subgraphs and collapsed graphs} \label{ssec:graphnotation}

Let us introduce the following notation. For $h\in\{1,\ldots,n\}$ and $\Ga\in \mc G(n)$, 
we denote by $\Ga\backslash h\in\mc G(n-1)$ 
the complete subgraph obtained from $\Gamma$ by removing the vertex $h$
and all edges starting or ending in $h$,
and relabeling the vertices from $1$ to $n-1$.
Moreover, for $i,j\in\{1,\dots,n\}$,
we define the graph $\pi_{ij}(\Ga)\in\mc G(n-1)$ obtained by collapsing the vertices $i$ and $j$
(and any edges between them) into a single vertex, numbered by $1$, 
and renumbering the remaining vertices from $2$ to $n-1$.
\begin{example}
For example, if 
$$
\begin{tikzpicture}
\node at (-1,0) {$\Gamma=$};
\draw[fill] (0,0) circle [radius=0.1];
\node at (0,-0.3) {1};
\draw[fill] (1,0) circle [radius=0.1];
\node at (1,-0.3) {2};
\draw[fill] (2,0) circle [radius=0.1];
\node at (2,-0.3) {3};
\draw[fill] (3,0) circle [radius=0.1];
\node at (3,-0.3) {4};
%
\draw[->] (0.1,0) -- (0.9,0);
\draw[->] (0,-0.1) to [out=270,in=270] (2,-0.1);
\draw[->] (1,0.1) to [out=90,in=90] (3,0.1);
\node at (5.3,0) {.};
\end{tikzpicture}
$$
we have
$$
\begin{tikzpicture}
\node at (-1,0) {$\Ga\backslash2=$};
\draw[fill] (0,0) circle [radius=0.1];
\node at (0,-0.3) {1};
\draw[fill] (1,0) circle [radius=0.1];
\node at (1,-0.3) {2};
\draw[fill] (2,0) circle [radius=0.1];
\node at (2,-0.3) {3};
\draw[->] (0.1,0) -- (0.9,0);
\node at (2.5,0) {,};
\end{tikzpicture}
\,\,\,\,
\begin{tikzpicture}
\node at (-1,0) {$\Ga\backslash3=$};
\draw[fill] (0,0) circle [radius=0.1];
\node at (0,-0.3) {1};
\draw[fill] (1,0) circle [radius=0.1];
\node at (1,-0.3) {2};
\draw[fill] (2,0) circle [radius=0.1];
\node at (2,-0.3) {3};
\draw[->] (0.1,0) -- (0.9,0);
\draw[->] (1.1,0) -- (1.9,0);
\node at (2.5,0) {,};
\end{tikzpicture}
$$
and
$$
\begin{tikzpicture}
\node at (-1,0) {$\pi_{12}(\Ga)=$};
\draw[fill] (0,0) circle [radius=0.1];
\node at (0,-0.3) {1};
\draw[fill] (1,0) circle [radius=0.1];
\node at (1,-0.3) {2};
\draw[fill] (2,0) circle [radius=0.1];
\node at (2,-0.3) {3};
\draw[->] (0.1,0) -- (0.9,0);
\draw[->] (0,0.1) to [out=90,in=90] (2,0.1);
\node at (2.5,0) {,};
\end{tikzpicture}
\,\,\,\,
\begin{tikzpicture}
\node at (-1,0) {$\pi_{23}(\Ga)=$};
\draw[fill] (0,0) circle [radius=0.1];
\node at (0,-0.3) {2};
\draw[fill] (1,0) circle [radius=0.1];
\node at (1,-0.3) {1};
\draw[fill] (2,0) circle [radius=0.1];
\node at (2,-0.3) {3};
\draw[->] (0.1,0) -- (0.9,0);
\draw[->] (1.1,0) -- (1.9,0);
\draw[->] (0,-0.1) to [out=270,in=270] (1,-0.1);
\node at (2.5,0) {.};
\end{tikzpicture}
$$
\end{example}
For $\Ga\in \mc G_0(n)$ and $i\in\{1,\dots,n\}$, we denote by 
$\deg_{\Ga}^-(i)$ the indegree of $i$ in $\Ga$, namely the number of edges of $\Ga$ incoming to $i$, 
by $\deg_{\Ga}^+(i)$ the outdegree of $i$ in $\Ga$, namely the number of edges of $\Ga$ outcoming from $i$,
and 
$$
\deg_{\Ga}(i)\coloneqq \deg_{\Ga}^-(i) + \deg_{\Ga}^+(i)
\,,
$$ 
the degree of $i$ in $\Ga$. 
For $i,j\in\{1,\dots,n\}$, we also let
$$
\epsilon_{\Ga}(i,j)\coloneqq \begin{cases}
1 & \text{ if  }\,\, i\rightarrow j\in E(\Ga)\,, \\
-1 & \text{ if  }\,\, i\leftarrow j\in E(\Ga)\,, \\
0 & \text{ otherwise}\,.
\end{cases}
$$
Note that, since $\Ga\in \mc G_0(n)$, $i\rightarrow j$ and $j\rightarrow i$ cannot be both in $E(\Ga)$.

\section{Classical PVA cohomology}\label{sec:3}

\subsection{Space of classical cochains}\label{sec:3.1}

Let $\mc V$ be a Poisson vertex algebra.
The corresponding \emph{classical PVA cohomology complex} $(C_{\cl},d)$ is constructed as follows \cite{BDSHK18}.
The space $C_{\cl}^{n}$ of \emph{classical $n$-cochains} consists of linear maps
\begin{equation}\label{eq:operadcl}
Y\colon \mb F\mc G(n)\otimes \mc V^{\otimes n} \longrightarrow 
\mc V[\lambda_1,\dots,\lambda_n]\big/\langle\partial+\lambda_1+\dots+\lambda_n\rangle\,,
\end{equation}
mapping the $n$-graph $\Gamma\in\mc G(n)$ 
and the monomial $v_1\otimes\,\cdots\,\otimes v_n\in V^{\otimes n}$ to the polynomial
\begin{equation}\label{eq:imcl}
Y^{\Gamma}_{\lambda_1,\dots,\lambda_n}(v_1\otimes\,\cdots\,\otimes v_n)\,,
\end{equation}
satisfying the skewsymmetry conditions,  cycle relations, and sesquilinearity conditions described below.

The \emph{skewsymmetry conditions} on $Y$ say that, for each permutation $\sigma\in S_n$,
we have 
\begin{equation}\label{eq:actclassicoperad}
Y^{\sigma(\Gamma)}_{\lambda_1,\dots,\lambda_n}
(v_1\otimes\cdots\otimes v_n)
=
\sign(\sigma)
Y^\Gamma_{\lambda_{\sigma(1)},\dots,\lambda_{\sigma(n)}}
(v_{\sigma(1)}\otimes\cdots\otimes v_{\sigma(n)})
\,,  
\end{equation}
where $\sigma(\Gamma)$ is defined in Section \ref{ssec:ngraphs}. 

Recall that $\mc R(n)\subset\mb F\mc G(n)$ is the subspace spanned by the cycle relations (i) and (ii)
from Section \ref{ssec:cyclerel}.
The \emph{cycle relations} on $Y$ say that
\begin{equation}\label{eq:cycle1}
Y^{\Gamma}=0 \,\,\text{ for }\,\, \Gamma \in \mc R(n) 
\,.
\end{equation}
Hence, $Y$ induces a map on $\mb F\mc G(n)/\mc R(n)$.
As an example, observe that, by the first cycle relation (i), 
changing orientation of a single edge of the $n$-graph $\Gamma\in\mc G(n)$ amounts 
to the change of sign of $Y^{\Gamma}$.

Let $\Gamma=\Gamma_1\sqcup\dots\sqcup\Gamma_s$
be the decomposition of $\Gamma$ as a disjoint union of its connected components,
and let $I_1,\dots,I_s\subset\{1,\dots,n\}$ be the sets of vertices of these
connected components. 
For each $\Ga_\alpha$ we write
\begin{equation}\label{eq:notation3}
\lambda_{\Ga_\alpha} 
= \sum_{i\in I_\alpha} \lambda_{i}\,,\qquad \partial_{\Ga_\alpha}
= \sum_{i\in I_\alpha} \partial_i\,,
\end{equation}
where $\partial_i$ denotes the action of $\partial$ on the $i$-th factor in the tensor product $\mc V^{\otimes n}$. 
Then, the \emph{sesquilinearity conditions} on $Y$  say that,
for $v\in\mc V^{\otimes n}$,
\begin{equation}\label{eq:sesq1}
Y^{\Gamma}_{\lambda_1,\dots,\lambda_n}(v)
\,\,\text{ is a polynomial in }\,\,
\lambda_{\Gamma_1},\dots,\lambda_{\Gamma_s}
\,,
\end{equation}
(and not in the variables $\lambda_1,\dots,\lambda_n$ separately),
and, for every $\alpha=1,\dots,s$,
\begin{equation}\label{eq:sesq2}
Y^{\Gamma}_{\lambda_1,\dots,\lambda_n}
((\partial_{\Gamma_\alpha}+\lambda_{\Gamma_\alpha})v)
=0
\,.
\end{equation}
Observe that the second sesquilinearity condition \eqref{eq:sesq2} implies
\begin{equation}\label{eq:sesq2.1}
Y^{\Gamma}_{\lambda_1,\dots,\lambda_n}(\partial v) = 
-\sum_{i=1}^n \la_i \, Y^{\Gamma}_{\lambda_1,\dots,\lambda_n}(v)
= \partial \bigl( Y^{\Gamma}_{\lambda_1,\dots,\lambda_n}(v) \bigr), 
\qquad v\in \mc V^{\otimes n}\,,
\end{equation}
i.e. $Y^\Gamma:\,\mc V^{\otimes n}\to 
\mc V[\lambda_1,\dots,\lambda_n]\big/\langle\partial+\lambda_1+\dots+\lambda_n\rangle$
is an $\mb F[\partial]$-module homomorphism.
\begin{remark}\label{rem:connected}
When the graph $\Gamma$ is connected, the first sesquilinearity condition \eqref{eq:sesq1} implies that
$Y^{\Gamma}_{\lambda_1,\dots,\lambda_n}(v)$
is a polynomial of
$\lambda_1+\dots+\lambda_n\equiv-\partial$. Hence, it is an element of
$$
\mc V[\lambda_1+\dots+\lambda_n]\big/\langle\partial+\lambda_1+\dots+\lambda_n\rangle
\simeq \mc V \,.
$$
In this case, we will omit the subscript of $Y^{\Gamma}$.
\end{remark}

By convention, for $n=0$ the graph $\Gamma$ is empty and $s=0$;
hence $C_{\cl}^{0}=\mc V/\partial\mc V$.
Note also that $C_{\cl}^1=\End_{\mb F[\partial]}\mc V$.

\subsection{Differential}\label{sec:3.2}

The classical PVA cohomology differential $d\colon C_{\cl}^{n}\to C_{\cl}^{n+1}$
is defined by the following formula:
\begin{align} \label{eq:explicitform}
&(dY)^{\Ga}_{\la_1,\ldots,\la_{n+1}}(v_1\otimes\ldots\otimes v_{n+1}) \notag \\
&=
\sum_{h:\, \deg_{\Ga}(h)=0} (-1)^{n-h} \Big[{v_h}_{\la_h} 
Y^{\Ga\backslash h}_{\la_1,\ldots\overset{h}{\curlyvee}\ldots,\la_{n+1}} 
(v_1\otimes\ldots \overset{h}{\curlyvee}\ldots\otimes v_{n+1}) \Big] \notag \\
&+
\!\!\!
\sum_{\substack{h:\, \deg_{\Ga}(h)=1 \\ j:\, \epsilon_{\Ga}(j,h)\neq 0}} 
\!\!\!\!\!\!
(-1)^{\deg_{\Ga}^+(h)+n-h+1} 
Y^{\Ga\backslash h}_{\la_1,\ldots\overset{h}{\curlyvee}\ldots,\la_j+x,\ldots,\la_{n+1}} 
\!\!\!\!\!\!\!\!\!
(v_1\otimes\ldots\overset{h}{\curlyvee}\ldots\otimes v_{n+1}) 
\big(\big|_{x=\la_h+\partial} v_h\big) \notag \\
&+
\sum_{i<j:\,\epsilon_{\Ga}(i,j)=0} 
(-1)^{n+i+j-1} 
Y^{\pi_{ij}(\Ga)}_{\la_i+\la_j,\la_1,\ldots\overset{i,j}{\curlyvee}\ldots,\la_{n+1}} 
\Big(
[{v_i}_{\la_i+X(i)}v_j]\otimes \notag \\
&\,\,\,\,\,\,\,\,\,\,\,\,\,\,\,\,\,\,\,\,\,\,\,\,\,\,\,\,\,\,\,\,\,\,\,\,\,\,\,\,\,\,\,\,\,\,\,\,\,\,\,\,\,\,
\otimes 
\big(\big|_{x_1=\la_1+\partial} v_1\big)\otimes\ldots \overset{i,j}{\curlyvee}\ldots
\otimes\big(\big|_{x_{n+1}=\la_{n+1}+\partial} v_{n+1}\big)
\Big) \notag \\
&+
\sum_{i<j} 
\epsilon_{\Ga}(i,j) 
(-1)^{n+i+j-1} 
Y^{\pi_{ij}(\Ga)}_{\la_i+\la_j,\la_1,\ldots \overset{i,j}{\curlyvee}\ldots,\la_{n+1}} 
(v_iv_j \otimes v_1\otimes\ldots \overset{i,j}{\curlyvee}\ldots\otimes v_{n+1})\,,
\end{align}
where $X(i)$ is the sum of the variables $x_k$ with $k\neq i$ in the same connected component as the vertex $i$.

\begin{theorem}\label{thm:differential}
Formula \eqref{eq:explicitform} defines a differential on the space of classical cochains 
$C_{\cl}=\bigoplus_{n\geq0}C_{\cl}^{n}$, i.e. $d^2=0$.
\end{theorem}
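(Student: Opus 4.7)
My proposed approach follows the operadic philosophy that is already in play in the paper (cf. Remark \ref{rem:Wnotation}) and in the foundational reference \cite{BDSHK18}. The strategy is to realize $d$ as an inner derivation $\ad X$ of a $\mathbb{Z}$-graded Lie superalgebra structure on $C_{\cl}$, where $X\in C_{\cl}^{2}$ is odd and satisfies $[X,X]=0$; once this is established, $d^{2}=\frac{1}{2}\ad[X,X]=0$ follows formally from the graded Jacobi identity of the bracket.

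First, I would equip $C_{\cl}=\bigoplus_{n\geq 0}C_{\cl}^{n}$ with an odd Lie superalgebra bracket coming from the classical operad. Informally, for $Y_{1}\in C_{\cl}^{m}$ and $Y_{2}\in C_{\cl}^{n}$, the bracket $[Y_{1},Y_{2}]\in C_{\cl}^{m+n-1}$ is a signed sum of operadic compositions indexed by pairs $(\Gamma,i)$, where $\Gamma\in\mc G(m+n-1)$ and $i$ is a vertex into which one ``inserts'' $Y_{2}$ (or $Y_{1}$); the graph $\Gamma$ is decomposed by collapsing edges incident to the insertion site into a pair $(\Gamma',\Gamma'')$ to which $Y_{1}$ and $Y_{2}$ are applied separately, with the $\lambda$-variables and the $\partial$-action matched by the sesquilinearity conditions \eqref{eq:sesq1}--\eqref{eq:sesq2}. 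I would then check that this bracket is well-defined modulo the cycle relations \eqref{eq:cycle1} and respects skewsymmetry \eqref{eq:actclassicoperad}, and that it is graded skew-symmetric and satisfies the graded Jacobi identity. This is essentially the combinatorial content of \cite[\S10]{BDSHK18}.

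Second, I would exhibit the distinguished cochain $X\in C_{\cl}^{2}$ corresponding to the PVA structure on $\mc V$: $X$ vanishes on the two-vertex graph with no edges, and on the two-vertex graph $\Gamma=(1\to 2)$ it is given by $X^{\Gamma}_{\lambda_{1},\lambda_{2}}(v_{1}\otimes v_{2})=[{v_{1}}_{\lambda_{1}}v_{2}]$, extended to the other orientation by the skewsymmetry \eqref{eq:actclassicoperad}. The axioms of Definition \ref{def:pva} translate cleanly into $[X,X]=0$: sesquilinearity and the Leibniz rules ensure $X\in C_{\cl}^{2}$, skewsymmetry of $[\cdot_{\lambda}\cdot]$ gives the $S_{2}$-equivariance, and the PVA Jacobi identity is exactly the vanishing $[X,X]=0$ on three-vertex graphs (the linear Leibniz contributions account for collapsed edges and for the fourth sum in \eqref{eq:explicitform}).

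Third, I would verify that the four sums in \eqref{eq:explicitform} reproduce term by term the operadic bracket $[X,-]$ applied to $Y$. The four cases match the four geometric configurations of a distinguished vertex $h$ or pair $(i,j)$ relative to $\Gamma$: isolated vertex $h$ (first sum), endpoint of exactly one edge (second sum, where the $\lambda$-variable of the adjacent vertex absorbs $\lambda_{h}+\partial$), non-adjacent pair to be merged by $X$ acting as the bracket (third sum), and adjacent pair to be merged by $X$ acting as the product via the Leibniz rule (fourth sum). The main obstacle is this last verification: the signs $(-1)^{\deg^{+}_{\Gamma}(h)+n-h+1}$ and the treatment of the variable $x=\lambda_{h}+\partial$, together with the sign conventions inherited from graph collapsing $\pi_{ij}$, have to be tracked precisely. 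Once $d=\ad X$ is established, the theorem is immediate.
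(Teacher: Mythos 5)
Your strategy is exactly the paper's: the proof of Theorem \ref{thm:differential} defers to Section \ref{sec:3.4}, which shows that $d=\ad X$ for the odd element $X\in C^2_{\cl}=W^1_{\cl}(\Pi\mc V)$ of \eqref{eq:PVAstructure}, with the $\mb Z$-graded Lie superalgebra structure and the identity $[X,X]=0$ imported from \cite[Sec.10]{BDSHK18}, whence $d^2=\tfrac12\ad[X,X]=0$. One concrete correction, though: you have the two components of $X$ swapped. In \eqref{eq:PVAstructure} the $\lambda$-bracket lives on the \emph{edgeless} two-vertex graph, $X^{\bullet\,\,\bullet}_{\lambda,-\lambda-\partial}(a\otimes b)=[a_\lambda b]$, while the single-edge graph carries the commutative product, $X^{\bullet\to\bullet}(a\otimes b)=ab$. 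This assignment is forced by the first sesquilinearity condition \eqref{eq:sesq1}: on a connected $2$-graph the value must be a polynomial in $\lambda_1+\lambda_2\equiv-\partial$ alone (Remark \ref{rem:connected}), hence essentially an element of $\mc V$, which a general $\lambda$-bracket is not. With your convention the computation of $\ad X$ would not reproduce \eqref{eq:explicitform} (for instance, Example \ref{ex:1} requires the bracket terms to arise from the edgeless graph). Once this is fixed, the rest of your outline coincides term by term with the paper's verification via the $\circ_1$-products and shuffle sums in \eqref{eq:nliebracket}.
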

\begin{proof}
As we will see in Section \ref{sec:3.4},
formula \eqref{eq:explicitform} corresponds to the differential of the classical PVA cohomology
defined in \cite{BDSHK18} with an operadic aproach.
\end{proof}

\begin{remark}\label{thm:PVAstructure}
The Poisson vertex algebra structure on $\mc V$ defines an element $X\in C_{\cl}^2$
by
\begin{equation}\label{eq:PVAstructure}
X^{\bullet\!-\!\!\!\!\to\!\bullet}(a\otimes b)
=
ab
\,\,,\,\,\,\,
X^{\bullet\,\,\bullet}_{\lambda,-\lambda-\partial}(a\otimes b)
=
[a_\lambda b]
\,.
\end{equation}
The skewsymmetry of $X$ is equivalent to the commutativity of $ab$ and the skewsymmetry of $[a_\lambda b]$,
while the sesquilinearity of $X$ is equivalent to the sesquilinearity of $[a_\lambda b]$
and the fact that $\partial$ is a derivation of $ab$.
Moreover, the associativity for $ab$, the Jacobi identity for $[a_\lambda b]$
and the Leibniz rule relating them,
together are equivalent to the condition that $dX=0$,
see \cite[Thm.10.7]{BDSHK18}.
\end{remark}
\begin{example}\label{ex:1}
Consider the completely disconnected graph $\Gamma=\bullet\,\,\bullet\,\cdots\,\bullet$.
Then in formula \eqref{eq:explicitform}, 
all $\deg_\Gamma(h)$, $\epsilon_\Gamma(i,j)$ and $X(i)$ vanish, and we obtain
\begin{align*}
&(dY)^{\bullet\,\cdots\,\bullet}_{\la_1,\ldots,\la_{n+1}}(v_1\otimes\ldots\otimes v_{n+1}) \notag \\
&=
\sum_{h=1}^{n+1} (-1)^{n-h} \Big[{v_h}_{\la_h} 
Y^{\bullet\,\cdots\,\bullet}_{\la_1,\ldots\overset{h}{\curlyvee}\ldots,\la_{n+1}} 
(v_1\otimes\ldots \overset{h}{\curlyvee}\ldots\otimes v_{n+1}) \Big] \notag \\
&+
\sum_{1\leq i<j\leq n+1} 
(-1)^{n+i+j-1} 
Y^{\bullet\,\cdots\,\bullet}_{\la_i+\la_j,\la_1,\ldots\overset{i,j}{\curlyvee}\ldots,\la_{n+1}} 
\Big([{v_i}_{\la_i}v_j]\otimes v_1\otimes\ldots \overset{i,j}{\curlyvee}\ldots \otimes v_{n+1}\Big)
\,,
\end{align*}
which is the same as \eqref{eq:lca-d}.
\end{example}
\begin{example}\label{ex:2}
Consider the case when $\Gamma=\Gamma_{n+1}$ is the standard $(n+1)$-line \eqref{eq:Ln}.
Then $\deg_\Gamma(h)=1$ for the endpoints $h=1$ or $n+1$, $\deg_\Gamma(h)=2$ otherwise,
so that the first sum in \eqref{eq:explicitform} vanishes.
The third sum vanishes as well
because, when $\epsilon_\Gamma(i,j)=0$, the graph $\pi_{ij}(\Gamma)$ has a cycle.
In the fourth sum we only have the terms with $j=i+1$.
Thus we obtain
\begin{align*}
&(dY)^{\Ga_{n+1}}(v_1\otimes\cdots\otimes v_{n+1}) \\
& =
(-1)^{n+1} 
v_1 
Y^{\Ga_n}
(v_2\otimes\cdots\otimes v_{n+1}) 
+
Y^{\Ga_n}
(v_1\otimes\cdots\otimes v_{n}) v_{n+1} \\
&+
\sum_{i=1}^n 
(-1)^{n+i-1} 
Y^{\Ga_n}
(v_1\otimes\cdots \otimes v_iv_{i+1}\otimes \cdots\otimes v_{n+1})\,.
\end{align*}
For the last term we used the skewsymmetry of $Y$ to bring the factor $v_iv_{i+1}$ in position $i$.
This is the formula for the Hochschild differential \cite{Hoc45}.
\end{example}

\subsection{Proof of the formula for the differential}\label{sec:3.4}

In the present paper, the formula \eqref{eq:explicitform}
for the classical PVA cohomology differential $d$ is taken as a definition.
Here, we show how that formula
is derived from the approach of \cite{BDSHK18}.
%
This implies Theorem \ref{thm:differential}.

Recall from \cite[Sec.10]{BDSHK18} the classical operad $\mc P_{\cl}(\Pi\mc V)$,
defined as follows.
The space $\mc P_{\cl}(\Pi\mc V)(n)$ consists of maps \eqref{eq:operadcl}
satisfying the cycle relations \eqref{eq:cycle1}
and the sesquilinearity conditions \eqref{eq:sesq1}-\eqref{eq:sesq2}.
There is a natural action of the symmetric group $S_n$ on $\mc P_{\cl}(\Pi\mc V)(n)$
defined by simultaneously permuting all the $\lambda_i$'s, the vectors $v_i$'s
and the vertices of the graph $\Gamma$,
and multiplying by the sign of the permutation,
since all vectors in $\Pi\mc V$ are odd.
Explicitly (see \cite[Eq.(10.10)]{BDSHK18})
\begin{equation}\label{eq:actSn}
(Y^\sigma)^{\Gamma}_{\lambda_1,\dots,\lambda_n}
(v_1\otimes\cdots\otimes v_n)
=
\sign(\sigma)
Y^{\sigma(\Gamma)}_{\lambda_{\sigma^{-1}(1)},\dots,\lambda_{\sigma^{-1}(n)}}
(v_{\sigma^{-1}(1)}\otimes\cdots\otimes v_{\sigma^{-1}(n)})
\,.
\end{equation}
Then the skewsymmetry conditions \eqref{eq:actclassicoperad}
are equivalent to the $S_n$ invariance of $Y$.
Therefore
\begin{equation}\label{20201022:eq1}
C^n_{\cl}=W^{n-1}_{\cl}(\Pi\mc V)=\big(\mc P_{\cl}(\Pi\mc V)(n)\big)^{S_n}
\end{equation}
is the space of fixed points under the action of the symmetric group $S_n$
in the classical operad $\mc P_{\cl}(\Pi\mc V)$.

The composition products in $\mc P_{\cl}(\Pi\mc V)$
are given by \cite[Eq.(10.11)]{BDSHK18}.
Here we need the special case of $\circ_1$-product (see \cite[Rem.10.3 and Eq.(8.18)]{BDSHK18}).
For $A\in \mc P_{\cl}(k)$, $B\in \mc P_\cl(m)$ and $G\in\mc G(m+k-1)$,
the $\circ_1$-product $A \circ_1 B \in \mc P_{\cl}(m+k-1)$ is given by 
\begin{equation}\label{com4}
\begin{split}
& (A \circ_1 B)_{\la_1,\dots,\la_{m+k-1}}^G (v_1\otimes\dots\otimes v_{m+k-1} ) \\
& = A_{\la_{G'},\la_{m+1},\dots,\la_{m+k-1}}^{\bar G''} \bigl( 
B_{\la_1+\la_{G_1}+\dd_{G_1},\dots,\la_{m}+\la_{G_{m}}+\dd_{G_{m}}}^{G'} (v_1\otimes\cdots \\
&\qquad\qquad\qquad\qquad\qquad
\dots\otimes v_{m})
\otimes v_{m+1}\otimes\dots\otimes v_{m+k-1} \bigr)
\,.
\end{split}
\end{equation}
Here $G'$ is the subgraph of $G$ with vertices $1,\dots,m$ and all edges from $G$ among these vertices;
$G''$ is the subgraph of $G$ that includes all edges of $G$ not in $G'$;
and $\bar G''$ is the graph with vertices labeled $1,m+1,\dots,m+k-1$ and edges obtained from the edges of $G''$ by replacing any vertex $1\le i\le m$ with $1$, keeping the same orientation.
Finally, the graph $G_i$ $(1\le i\le m)$ is the subgraph of $G''$ obtained from
the connected component of the vertex $i$ in $G''$ by removing from it the vertex $i$ 
and all edges connected to $i$.

By  \cite[Thm.3.4]{BDSHK18}, $W_{\cl}(\Pi\mc V)=\bigoplus_{k\geq-1}W^k_{\cl}(\Pi\mc V)$ has the structure of a 
$\mb Z$-graded Lie superalgebra.
In particular, for $X\in W^1_\cl(\Pi\mc V)$ and $Y\in W^{n-1}_\cl (\Pi\mc V)$,
their Lie bracket is given by \cite[Eqs. (3.13), (3.16)]{BDSHK18}:
\begin{equation} \label{eq:nliebracket}
[X,Y] = \sum_{\sigma \in S_{n,1}}(X\circ_1 Y)^{\sigma^{-1}} 
+ (-1)^n \sum_{\tau \in S_{2,n-1}}(Y\circ_1 X)^{\tau^{-1}}
\,,
\end{equation}	
where $S_{n,1}$ and $S_{2,n-1}$ denote the sets of shuffles from Section \ref{sec:shuffles}.

The element $X\in C^2_{\cl}=W^1_{\cl}(\Pi\mc V)$ in \eqref{eq:PVAstructure}
is odd and satisfies $[X,X]=0$, see \cite[Thm.10.7]{BDSHK18}.
Hence, $(\ad X)^2=0$,
and $d=\ad X$ was taken as the differential of the classical PVA cohomology complex 
in \cite[Def.10.8]{BDSHK18}.
As a consequence, the classical PVA cohomology $H_{\cl}(\mc V)$
has an induced Lie superalgebra structure.
Here we show that the differential $d$ in \eqref{eq:explicitform}
coincides with $\ad X$ from \eqref{eq:nliebracket}:

\begin{proposition}
For $Y\in C_{\cl}^n=W^{n-1}_\cl (\Pi\mc V)$, we have $dY=[X,Y]$.
\end{proposition}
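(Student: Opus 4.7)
The plan is to expand $[X,Y]$ directly from its definition \eqref{eq:nliebracket} and to verify, summand by summand, that the result matches the four sums in \eqref{eq:explicitform}. I would begin by writing
\[
[X,Y] = \sum_{\sigma \in S_{n,1}} (X \circ_1 Y)^{\sigma^{-1}} + (-1)^n \sum_{\tau \in S_{2,n-1}} (Y \circ_1 X)^{\tau^{-1}}
\]
and handle each of the two pieces separately, decomposing each according to which component of $X$ (the product $X^{\bullet\to\bullet}(a\otimes b)=ab$ or the bracket $X^{\bullet\;\bullet}_{\lambda,-\lambda-\partial}(a\otimes b)=[a_\lambda b]$) is being used.

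For $X\circ_1 Y$, I apply \eqref{com4} with $A=X$ (of arity $k=2$) and $B=Y$ (of arity $m=n$). Then, for a graph $G\in\mc G(n+1)$, the subgraph $G'$ is the restriction of $G$ to $\{1,\dots,n\}$ (fed into $Y$), while $\bar G''$ is the $2$-graph on $\{1,n+1\}$ obtained by collapsing $\{1,\dots,n\}$ to a single vertex (fed into $X$). Shuffles $\sigma\in S_{n,1}$ are indexed by $h := \sigma(n+1)\in\{1,\dots,n+1\}$, and via \eqref{eq:actSn} the application of $\sigma^{-1}$ moves the role of ``extra vertex'' from $n+1$ to $h$; summing over $\sigma$ thus yields a sum over $h$. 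Since $X$ is supported, up to skewsymmetry and cycle relations, only on the two graphs $\bullet\;\bullet$ and $\bullet\to\bullet$, only $\deg_\Gamma(h)\in\{0,1\}$ contribute: the degree-$0$ case uses the bracket part of $X$ and reproduces the first sum of \eqref{eq:explicitform}, while the degree-$1$ case uses the product part of $X$ and reproduces the second sum, with the substitution $x=\lambda_h+\partial$ acting on $v_h$ coming from the $\lambda_i+\lambda_{G_i}+\partial_{G_i}$ factors in \eqref{com4}.

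For $Y\circ_1 X$, I apply \eqref{com4} with $A=Y$ (of arity $k=n$) and $B=X$ (of arity $m=2$). Now $G'$ is the subgraph on $\{1,2\}$ (fed into $X$) and $\bar G''$ is the graph on $\{1,3,\dots,n+1\}$ (fed into $Y$). Shuffles $\tau\in S_{2,n-1}$ are indexed by the pair $(i,j)=(\tau(1),\tau(2))$ with $i<j$; via \eqref{eq:actSn}, $\tau^{-1}$ moves the pair $(1,2)$ into positions $(i,j)$ of the target graph $\Gamma$, and the resulting $\bar G''$ becomes $\pi_{ij}(\Gamma)$. When $\epsilon_\Gamma(i,j)=0$, the bracket part of $X$ applies and produces the third sum of \eqref{eq:explicitform}, with the shift $\lambda_i+X(i)$ encoding the contribution of the connected component of vertex $i$ in $\bar G''$. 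When $\epsilon_\Gamma(i,j)\neq 0$, the product part of $X$ applies and produces the fourth sum, with the factor $\epsilon_\Gamma(i,j)$ arising from reorienting the single edge of $G'$ to match the standard form $\bullet\to\bullet$ of $X$.

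The main obstacle will be the careful bookkeeping of signs and of $\lambda$-shifts. The signs combine $\sign(\sigma^{-1})$ from \eqref{eq:actSn}, the global $(-1)^n$ attached to the second sum of \eqref{eq:nliebracket}, the position of the shuffled vertex or pair, and the extra signs $(-1)^{\deg_\Gamma^+(h)}$ and $\epsilon_\Gamma(i,j)$ from reorienting edges; these must be shown to add up to the exponents $(-1)^{n-h}$, $(-1)^{\deg_\Gamma^+(h)+n-h+1}$ and $(-1)^{n+i+j-1}$ appearing in \eqref{eq:explicitform}. Likewise, the $\lambda$-shifts must be read off from the $\lambda_i+\lambda_{G_i}+\partial_{G_i}$ factors in \eqref{com4} and identified with $\lambda_h+\partial$ and $\lambda_i+X(i)$; this requires invoking the sesquilinearity conditions \eqref{eq:sesq1}--\eqref{eq:sesq2} of $Y$ to redistribute $\partial$'s inside $\mc V_n$, and using the skewsymmetry \eqref{eq:actclassicoperad} to bring each tensor factor into the position dictated by the formula.
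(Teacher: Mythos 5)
Your proposal is correct and follows essentially the same route as the paper's proof: enumerate the shuffles in $S_{n,1}$ and $S_{2,n-1}$ by the index $h$ and the pair $(i,j)$, apply the $\circ_1$-product formula \eqref{com4} with $(A,B)=(X,Y)$ and $(Y,X)$ respectively, split into cases according to $\deg_\Gamma(h)\in\{0,1\}$ and $\epsilon_\Gamma(i,j)$, and match each case with the corresponding sum in \eqref{eq:explicitform}. The structural identifications ($G'=\Gamma\backslash h$, $\bar G''=\pi_{ij}(\Gamma)$, the origin of the shifts $\lambda_h+\partial$ and $\lambda_i+X(i)$, and of the signs $(-1)^{\deg^+_\Gamma(h)}$ and $\epsilon_\Gamma(i,j)$ from reorienting edges) all agree with the paper.
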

\begin{proof}
Recalling from Section \ref{sec:shuffles} the definition of shuffles, we have $S_{n,1}=\{\sigma_h\}_{h=1}^{n+1}$
where
\begin{equation*}
\sigma_h = \left(\begin{array}{ccc|c}
1 & \cdots\cdots\cdots & n  & n+1 \\
1 &  \cdots\overset{h}{\curlyvee}\cdots & n+1 & h
\end{array} \right) \,,
\end{equation*}
and $S_{2,n-1}=\{\tau_{i,j}\}_{1\leq i<j\leq n+1}$ where
\begin{equation*} 
\tau_{i,j} = \left(\begin{array}{cc|ccc}
1 & 2  & 3 & \cdots\cdots\cdots  & n+1 \\
i & j & 1 & \cdots\overset{i,j}{\curlyvee}\cdots  & n+1
\end{array} \right)
\,.
\end{equation*}
Clearly,
$$
\sign(\sigma_h)=(-1)^{n-h+1}
\,\,\text{ and }\,\,
\sign(\tau_{i,j})=(-1)^{i+j-1}
\,.
$$
Hence, formula \eqref{eq:nliebracket} becomes
\begin{align} 
\label{eq:nlb}
& [X,Y]^{\Ga}_{\la_1,\dots,\la_{n+1}} (v_1\otimes\cdots\otimes v_{n+1}) 
\\
& =\sum_{h=1}^{n+1} ((X\circ_1 Y)^{\sigma_h^{-1}})^{\Ga}_{\la_1,\dots,\la_{n+1}}(v_1\otimes\cdots\otimes v_{n+1}) 
\notag\\
&\;\;\;\; 
+ (-1)^n \sum_{i<j} ((Y\circ_1 X)^{\tau_{i,j}^{-1}})^{\Ga}_{\la_1,\dots,\la_{n+1}}(v_1\otimes\cdots\otimes v_{n+1})
\notag\\
&=\sum_{h=1}^{n+1} (-1)^{n-h+1} (X\circ_1 Y)^{\sigma_h^{-1}(\Ga)}_{\la_1,\ldots \overset{h}{\curlyvee} \ldots,\la_{n+1},\la_h}(v_1\otimes\ldots\overset{h}{\curlyvee}\ldots\otimes v_{n+1}\otimes v_h) \notag \\
&\;\;\;\;
+ \sum_{i<j} (-1)^{n+i+j-1}(Y\circ_1 X)^{\tau_{i,j}^{-1}(\Ga)}_{\la_i,\la_j,\la_1,\ldots\overset{i,j}{\curlyvee}\ldots,\la_{n+1}}(v_i\otimes v_j\otimes v_1\otimes\ldots\overset{i,j}{\curlyvee}\ldots\otimes v_{n+1}) \,,
\notag
\end{align}
where $\overset{h}{\curlyvee}$ denotes a missing factor.

Let us study the two summands in the right-hand side of \eqref{eq:nlb} separately.
To compute the first summand,
we use equation \eqref{com4} with $A=X$, $B=Y$, $k=2$, $m=n$
and $G={\sigma_h^{-1}(\Ga)}$.
Note that  $\sigma_h^{-1}(\Ga)$ is obtained by moving the $h$-th vertex at the end of the graph.
Hence, 
$G'=\Gamma\backslash h$
and $G''$ is the subgraph of $\Gamma$ obtained by keeping only the edges in or out of the vertex $h$.
Then $\bar G''$ is a graph with two vertices labeled $1$ and $h$,
and 
$$
\bar G''
= 
\begin{cases}
{}\,\bullet\,\,\,\,\,\,\bullet & \text{ if  } \deg_{\Gamma}(h)=0\,, \\
{}_1\bullet\!\to\!\!\bullet_h & \text{ if  } \deg_{\Gamma}(h)=\deg^-_{\Gamma}(h)=1\,, \\
{}_1\bullet\!\leftarrow\!\!\bullet_h & \text{ if  } \deg_{\Gamma}(h)=\deg^+_{\Gamma}(h)=1\,,
\end{cases}
$$
and $\bar G''$ has a cycle if $\deg_{\Gamma}(h)\geq2$.
Moreover, if $\deg_{\Gamma}(h)=0$ then $G_i=\emptyset$ for all $i$,
while  if $\deg_{\Gamma}(h)=1$ and there is an edge connecting $h$ with $j$
then $G_i=\emptyset$ for all $i\neq j$ and $G_j=\bullet_h$.
As a result, provided that $\deg_{\Gamma}(h)\leq 1$, we obtain
\begin{align}
& (X\circ_1 Y)^{\sigma_h^{-1}(\Ga)}_{\la_1,\ldots \overset{h}{\curlyvee} \ldots,\la_{n+1},\la_h}(v_1\otimes\ldots\overset{h}{\curlyvee}\ldots\otimes v_{n+1}\otimes v_h) 
\label{eq:corona1} \\
& = 
\begin{cases}
X_{-\lambda_h-\partial,\la_h}^{\bullet\,\,\bullet} \bigl( 
Y^{\Gamma\backslash h}_{\la_1,\ldots\overset{h}{\curlyvee}\ldots,\la_{n+1}} 
(v_1\otimes\ldots \overset{h}{\curlyvee}\ldots\otimes v_{n+1})
\otimes v_h
\bigr)
& \text{ if  } \deg_{\Gamma}(h)=0\,, \\
X^{\bullet\to\bullet} \bigl( 
Y^{\Ga\backslash h}_{\la_1,\ldots\overset{h}{\curlyvee}\ldots,\la_j+\lambda_h+\partial_h,\ldots,\la_{n+1}} 
\!\!\!\!\!\!\!\!\!
(v_1\otimes\ldots\overset{h}{\curlyvee}\ldots\otimes v_{n+1}) 
\otimes v_h
\bigr)
& \text{ if  } j\rightarrow h\in E(\Ga)\,, \\
X^{\bullet\leftarrow\bullet} \bigl( 
Y^{\Ga\backslash h}_{\la_1,\ldots\overset{h}{\curlyvee}\ldots,\la_j+\lambda_h+\partial_h,\ldots,\la_{n+1}} 
\!\!\!\!\!\!\!\!\!
(v_1\otimes\ldots\overset{h}{\curlyvee}\ldots\otimes v_{n+1}) 
\otimes v_h
\bigr)
& \text{ if  } j\leftarrow h\in E(\Ga)\,,
\end{cases}
\notag
\end{align}
where $\partial_h$ denotes the action of $\partial$ on $v_h$,
while $(X\circ_1 Y)^{\sigma_h^{-1}(\Ga)}=0$ if $\deg_{\Gamma}(h)>1$.

To compute the second summand in the right-hand side of \eqref{eq:nlb},
we use equation \eqref{com4} with $A=Y$, $B=X$, $k=n$, $m=2$
and $G=\tau_{i,j}^{-1}(\Gamma)$.
Note that $\tau_{i,j}^{-1}(\Ga)$ is obtained by moving vertices $i$ and $j$ at the beginning of $\Ga$, 
keeping the order between $i$ and $j$.
Hence, 
$$
G'
= 
\begin{cases}
\bullet\,\,\,\,\,\,\bullet & \text{ if  there is no edge between $i$ and $j$ in $\Gamma$} \,, \\
\bullet\!\to\!\!\bullet & \text{ if  } i\to j\,\in E(\Gamma)\,, \\
\bullet\!\leftarrow\!\!\bullet & \text{ if  }  i\leftarrow j\,\in E(\Gamma)\,,
\end{cases}
$$
while $\bar G''=\pi_{ij}(\Gamma)$. 
As a result, we obtain
\begin{align}
& (Y\circ_1 X)^{\tau_{i,j}^{-1}(\Ga)}_{\la_i,\la_j,\la_1,\ldots\overset{i,j}{\curlyvee}\ldots,\la_{n+1}}
(v_i\otimes v_j\otimes v_1\otimes\ldots\overset{i,j}{\curlyvee}\ldots\otimes v_{n+1})
\label{eq:corona2} \\
& = 
\begin{cases}
Y_{\la_i+\la_j,\la_1,\dots\overset{i,j}{\curlyvee}\ldots \la_{n+1}}^{\pi_{ij}(\Gamma)} 
\!\!\!\!\!\!\!
\bigl( 
X_{\la_i+\la_{G_i}+\dd_{G_i},\la_{j}+\la_{G_{j}}+\dd_{G_{j}}}^{\bullet\,\,\bullet} \!\! (v_i\otimes v_{j})
\otimes v_1\otimes \dots\overset{i,j}{\curlyvee} \dots \otimes v_{n+1}
\bigr)  \\
\qquad\qquad\qquad\qquad\qquad\qquad\qquad\qquad\qquad\qquad\qquad\qquad
\text{ if  }\,\, \epsilon_{\Gamma}(i,j)=0\,, \\
Y_{\la_i+\la_j,\la_1,\dots\overset{i,j}{\curlyvee}\ldots \la_{n+1}}^{\pi_{ij}(\Gamma)} \bigl( 
X^{\bullet\to\bullet} (v_i\otimes v_{j})
\otimes v_1\otimes \dots\overset{i,j}{\curlyvee} \dots \otimes v_{n+1}
\bigr) \\
\qquad\qquad\qquad\qquad\qquad\qquad\qquad\qquad\qquad\qquad\qquad\qquad
\text{ if  }\,\, \epsilon_{\Gamma}(i,j)=1\,, \\
Y_{\la_i+\la_j,\la_1,\dots\overset{i,j}{\curlyvee}\ldots \la_{n+1}}^{\pi_{ij}(\Gamma)} \bigl( 
X^{\bullet\leftarrow\bullet} (v_i\otimes v_{j})
\otimes v_1\otimes \dots\overset{i,j}{\curlyvee} \dots \otimes v_{n+1}
\bigr) \\
\qquad\qquad\qquad\qquad\qquad\qquad\qquad\qquad\qquad\qquad\qquad\qquad
\text{ if  }\,\, \epsilon_{\Gamma}(i,j)=-1 \,.
\end{cases}
\notag
\end{align}
Combining equations \eqref{eq:nlb}, \eqref{eq:corona1} and \eqref{eq:corona2}
and recalling \eqref{eq:PVAstructure}, we obtain \eqref{eq:explicitform}.
\end{proof}

\section{The Main Theorem}\label{sec:4}

To a Poisson vertex algebra $\mc V$ we associate two cohomology complexes: 
the variational PVA cohomology complex $C_{\PV}$ introduced in Section \ref{sec:2.2}, 
and the classical PVA cohomology complex $C_{\cl}$ introduced in Section \ref{sec:3}.
Recall also from Remark \ref{rem:Wnotation} and Section \ref{sec:3.4},
that these complexes have the structure of a Lie superalgebra.
It is natural to ask what is the relation between these two cohomology theories.
A partial answer was provided by the following:
\begin{theorem}[{\cite[Theorem\ 11.4]{BDSHK18}}]\label{thm:homomorphism}
We have a canonical injective homomorphism of Lie superalgebras
\begin{equation}\label{eq:homomorphism}
H_{\PV}(\mc V)
\,\hookrightarrow\,
H_{\cl}(\mc V)
\end{equation}
induced by the map that sends $f\in C_{\PV}^n$ to $Y\in C_\cl^n$
such that 
$$
Y^{\bullet\,\cdots\,\bullet}= f
\,\,\text{ and }\,\,
Y^\Gamma=0
\,\text{ if }\,
|E(\Gamma)|\neq\emptyset
\,.
$$
\end{theorem}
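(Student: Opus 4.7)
My plan is to construct $\varphi$ at the level of cochains, verify it defines a chain map, and then show injectivity on cohomology by producing an explicit primitive for any PV cocycle whose image under $\varphi$ is a classical coboundary. I would first check that $\varphi$ is well-defined: given $f\in C^n_{\PV}$, set $Y=\varphi(f)$ with $Y^{[n]}=f$ and $Y^\Gamma=0$ when $E(\Gamma)\neq\emptyset$. The skewsymmetry condition \eqref{eq:actclassicoperad} reduces on $[n]$ to the PV skewsymmetry \eqref{eq:skew}, and is automatic on other graphs because any $\sigma\in S_n$ sends $[n]$ to $[n]$ while graphs with edges are mapped to graphs with edges. The cycle relations \eqref{eq:cycle1} hold trivially since graphs with cycles have edges, and in $\sum_{e\in C}\Gamma\setminus e$ each summand still has $\geq 1$ edges. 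The sesquilinearity conditions \eqref{eq:sesq1}--\eqref{eq:sesq2} on $[n]$ reduce to the PV sesquilinearity \eqref{eq:sesq} because every connected component of $[n]$ is a singleton. Note that the Leibniz rule \eqref{eq:leib} for $f$ plays no role at this stage.

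Next I would show $d\circ\varphi=\varphi\circ d$. On $\Gamma=[n+1]$ this is exactly Example \ref{ex:1}: formula \eqref{eq:explicitform} specializes to \eqref{eq:lca-d}. For $\Gamma$ with edges I must verify $(d\varphi(f))^\Gamma=0$ by examining each of the four sums in \eqref{eq:explicitform}. In the first sum $h$ is isolated in $\Gamma$, so $\Gamma\setminus h$ retains all edges of $\Gamma$ and $Y^{\Gamma\setminus h}=0$. In the third sum $\pi_{ij}(\Gamma)$ retains every edge of $\Gamma$ not joining $i$ and $j$, hence carries an edge unless $\Gamma$ consists of a single edge with endpoints $\{i,j\}$, excluded by $\epsilon_\Gamma(i,j)=0$. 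The surviving contributions from the second and fourth sums, arising when $\Gamma$ has a single edge $i_0\to j_0$ and $h\in\{i_0,j_0\}$ in the second sum or $(i,j)=(i_0,j_0)$ in the fourth, combine to give the Leibniz identity \eqref{eq:leib} for $f$ and therefore cancel. For the Lie-superalgebra structure, inspection of the classical $\circ_1$-product \eqref{com4} shows that if $G$ carries an edge then either $G'$ or $\bar G''$ is edged, killing $\varphi(f)\circ_1\varphi(g)$; whereas on edge-free $G$ the composition reduces to the PV operadic composition. Hence $\varphi(f)\circ_1\varphi(g)=\varphi(f\circ_1 g)$, and \eqref{eq:nliebracket} upgrades this to a morphism of Lie superalgebras at the chain level, which descends to cohomology.

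Finally, for injectivity of $\varphi^*$, suppose $\varphi(f)=dZ$ with $Z\in C^{n-1}_{\cl}$, and set $g:=Z^{[n-1]}$. The skewsymmetry and sesquilinearity of $g$ as a PV cochain are inherited directly from the classical conditions on $Z$ restricted to the edgeless graph $[n-1]$. To establish the Leibniz rule for $g$, evaluate the identity $(dZ)^\Gamma=\varphi(f)^\Gamma=0$ on the graph $\Gamma\in\mc G(n)$ with a single edge $i\to j$ and all other vertices isolated: exactly as in the chain-map step, all terms involving $Z$ on edge-supporting subgraphs vanish, so only the second sum (with $h\in\{i,j\}$) and the fourth sum (with the pair $(i,j)$) survive, and they assemble precisely into the Leibniz identity \eqref{eq:leib} for $g$ at the collapsed position. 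Hence $g\in C^{n-1}_{\PV}$. Applying Example \ref{ex:1} to $dZ$ on $[n]$ yields $dg=(dZ)^{[n]}=\varphi(f)^{[n]}=f$, so $[f]=0$ in $H^n_{\PV}(\mc V)$. I expect the main technical obstacle to be the precise sign bookkeeping in this last step: the coefficients $(-1)^{\deg_\Gamma^+(h)+n-h+1}$ and $\epsilon_\Gamma(i,j)(-1)^{n+i+j-1}$ from \eqref{eq:explicitform}, together with the relabelings entailed by passing from $\Gamma\setminus h$ or $\pi_{ij}(\Gamma)$ back to $[n-1]$, must reproduce exactly the signs in the Leibniz rule \eqref{eq:leib}.
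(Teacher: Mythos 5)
First, a remark on the comparison itself: the paper does not prove this statement — it is imported verbatim from \cite{BDSHK18} (Theorem 11.4 there) — so there is no in-paper proof to measure your attempt against. That said, the first three stages of your argument are sound: the verification that $\varphi(f)$ satisfies skewsymmetry, the cycle relations and sesquilinearity; the chain-map property, with the Leibniz rule of $f$ entering exactly through the graphs with a single edge (your accounting of which of the four sums in \eqref{eq:explicitform} survive is correct); and the compatibility with the $\circ_1$-products, hence with the Lie brackets.

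The injectivity step, however, contains a genuine gap. You take an arbitrary $Z\in C^{n-1}_{\cl}$ with $dZ=\varphi(f)$, set $g=Z^{[n-1]}$, and claim that when $(dZ)^\Gamma=0$ is evaluated on a single-edge graph $\Gamma\in\mc G(n)$, ``all terms involving $Z$ on edge-supporting subgraphs vanish, exactly as in the chain-map step.'' This is false: in the chain-map step the cochain being differentiated was $\varphi(f)$, which vanishes on every graph with an edge \emph{by construction}, whereas $Z$ is a general classical cochain and $Z^{\Gamma'}$ need not vanish when $\Gamma'$ has an edge. Concretely, the first sum in \eqref{eq:explicitform} runs over the $n-2$ isolated vertices $h$ of $\Gamma$ and contributes terms $\pm\big[{v_h}_{\lambda_h}Z^{\Gamma\backslash h}(\cdots)\big]$ in which $\Gamma\backslash h$ still carries the edge, and the third sum contributes terms $\pm Z^{\pi_{ij}(\Gamma)}\big([{v_i}_{\lambda_i}v_j]\otimes\cdots\big)$ in which $\pi_{ij}(\Gamma)$ still carries the edge whenever $\{i,j\}$ is not the edge of $\Gamma$. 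None of these vanish in general, so $(dZ)^\Gamma=0$ does not reduce to the Leibniz identity for $g$, and there is no reason for $Z^{[n-1]}$ to lie in $C^{n-1}_{\PV}$. This is not a sign-bookkeeping issue: the entire difficulty of comparing $H_{\PV}$ with $H_{\cl}$ — the difficulty that Sections 6--8 of the paper attack via the filtration \eqref{20200702:eq1} and the vanishing of sesquilinear Harrison cohomology — is precisely that classical cochains have nontrivial components on graphs with edges. To make your scheme work one would first have to correct $Z$ by a coboundary so that it lands in $F_{n-1}C^{n-1}_{\cl}$ (at which point your computation does yield the Leibniz rule for $Z^{[n-1]}$), but for a general PVA $\mc V$ this requires an argument, and the theorem as stated carries no hypothesis on $\mc V$; one must instead fall back on the proof given in \cite{BDSHK18}.
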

It was left as an open question in \cite{BDSHK18} whether \eqref{eq:homomorphism} is, in fact, 
an isomorphism. The main result of this paper will be the proof that this is indeed the case,
under some regularity assumption on $V$. 
\begin{theorem} \label{thm:main}
Assuming that the PVA $\mc V$, as a differential algebra, is a finitely-generated algebra of differential polynomials,
the Lie superalgebra homomorphism \eqref{eq:homomorphism} is an isomorphism. 
\end{theorem}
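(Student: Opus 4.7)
My plan is to deduce Theorem \ref{thm:main} directly from the two supporting Theorems \ref{thm:1.2} and \ref{thm:1.3} via a finite induction along the filtration $F_{\bullet}C_{\cl}$ introduced just before Theorem \ref{thm:1.2}. The strategy is to take an arbitrary $n$-cocycle $Y\in C^n_{\cl}$ and repeatedly modify it by coboundaries, using the vanishing of the associated-graded cohomology, so as to push it into the deepest filtration piece $F_n C^n_{\cl}$, which I claim coincides with $\varphi(C^n_{\PV})$.

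Two preparatory observations are needed. First, $F_n C^n_{\cl}=\varphi(C^n_{\PV})$: indeed, $Y\in F_n C^n_{\cl}$ means $Y^{\Gamma}=0$ whenever $n-e(\Gamma)<n$, i.e., whenever $\Gamma$ has at least one edge, so only the edge-free graph $[n]$ can contribute, and Example \ref{ex:1} shows that the classical differential \eqref{eq:explicitform} restricts on this subspace to the variational differential \eqref{eq:lca-d}. Second, the classical differential preserves the filtration: by inspection of \eqref{eq:explicitform}, each of its four types of terms evaluates $Y$ on a graph obtained from $\Gamma'$ by either deleting a vertex of degree at most $1$ or contracting a pair of vertices, and in every case the resulting graph has no more connected components than $\Gamma'$, so if $\Gamma'$ has fewer than $s$ components then every derived graph does too, and $Y$ vanishes on it. Granted these, I carry out the induction as follows: set $Y^{(1)}:=Y\in F_1 C^n_{\cl}=C^n_{\cl}$; given a cocycle $Y^{(s)}\in F_s C^n_{\cl}$ cohomologous to $Y$, by Theorem \ref{thm:1.2} its class $[Y^{(s)}]$ lives in $\gr_s C_{\cl}\simeq C^s_{\sym,\Har}(\mc V,\mc V)$ as an $n$-cocycle, and by Theorem \ref{thm:1.3} (valid precisely for $1\leq s<n$) it is an $n$-coboundary $[Y^{(s)}]=d[W_s]$ in $\gr_s C_{\cl}$; lifting $W_s\in F_s C^{n-1}_{\cl}$ and setting $Y^{(s+1)}:=Y^{(s)}-dW_s$, one obtains a cocycle in $F_{s+1}C^n_{\cl}$ still cohomologous to $Y$. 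After $n-1$ iterations we reach $Y^{(n)}=\varphi(f)\in F_n C^n_{\cl}$ for some $f\in C^n_{\PV}$, and the cocycle equation $0=d\varphi(f)=\varphi(df)$ combined with the injectivity of $\varphi$ as a map of cochain spaces forces $df=0$. Hence $[Y]=\varphi^*([f])$ in $H^n_{\cl}(\mc V)$, which shows surjectivity of $\varphi^*$; combined with the injectivity already established in Theorem \ref{thm:homomorphism}, this yields the isomorphism.

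The substantive content of the argument therefore lies entirely in the two supporting theorems. Theorem \ref{thm:1.2} is an algebraic-combinatorial unwinding that matches the basis of proper $\ul{k}$-lines (Theorem \ref{thm:lines}) against the Harrison conditions via the monotone-permutation identity of Lemma \ref{lem:identity}. The main obstacle, however, is Theorem \ref{thm:1.3}: the vanishing of $H^n(C^s_{\sym,\Har}(\mc V,\mc V))$ for $1\leq s<n$ when $\mc V$ is a finitely-generated algebra of differential polynomials. This is a Hochschild--Kostant--Rosenberg-type vanishing adapted to the differential polynomial setting with multiple sesquilinearity parameters $\Lambda_1,\dots,\Lambda_s$; it is exactly where the hypothesis on $\mc V$ is used, and its proof should require constructing an explicit contracting homotopy from the polynomial generators of $\mc V$ and the interaction between $\partial$ and the $\Lambda_t$'s.
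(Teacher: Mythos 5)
Your proposal is correct and follows essentially the same route as the paper's own proof in Section \ref{sec:8}: an induction along the filtration $F_sC^n_{\cl}$, using the fact that $d$ preserves the filtration (Proposition \ref{20200702:prop}), the isomorphism $\gr_sC_{\cl}\simeq C^s_{\sym,\Har}$ of Theorem \ref{thm:whatever}, and the vanishing of Corollary \ref{cor:reimundo} for $1\leq s<n$ to strip off one filtration level at a time until the cocycle lands in $F_nC^n_{\cl}=\varphi(C^n_{\PV})$. Your two preparatory observations are exactly the ingredients the paper supplies, so nothing is missing.
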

The remainder of the paper will be devoted to the proof of Theorem \ref{thm:main}.
In Section \ref{sec:5}, we introduce a new cohomology complex, called the
sesquilinear Harrison cohomology complex.
In Section \ref{sec:6}, we define a filtration of the classical PVA cohomology complex
and we prove that its associated graded is isomorphic to the sesquilinear
Harrison cohomology complex.
We then show, in Section \ref{sec:7} that the cohomology of the sesquilinear
Harrison cohomology  complex
vanishes in positive degree.
Using that, we complete, in Section \ref{sec:8}, the proof of Theorem \ref{thm:main}.

\section{Sesquilinear Harrison cohomology}\label{sec:5}

In the present Section we introduce the sesequilinear Hochschild and Harrison cohomology complexes.
In order to do so, we first review the differential Hochschild 
and Harrison cohomology complexes.

\subsection{Differential Hochschild cohomology complex} \label{ssec:Hoch}

Let $A$ be an associative algebra over the base field $\mb F$,
and $M$ be an $A$-bimodule. 
The corresponding \emph{Hochschild cohomology complex} of $A$ with coefficients in $M$ 
is defined as follows \cite{Hoc45}.
The space of $n$-cochains is
\begin{equation} \label{eq:hochobject}
\Hom(A^{\otimes n},M)\;,
\end{equation}
and the differential $d\colon\Hom(A^{\otimes n},M) \rightarrow \Hom(A^{\otimes n+1},M)$ is defined by
\begin{align} \label{eq:hochdifferential}
(df)(a_1\otimes &\dots\otimes a_{n+1}) = a_1f(a_2\otimes\dots\otimes a_{n+1}) \notag \\ &+\sum_{i=1}^{n} (-1)^i f(a_1\otimes \dots\otimes a_{i-1}\otimes a_ia_{i+1}\otimes a_{i+2}\otimes \dots\otimes a_{n+1}) \notag \\
&+ (-1)^{n+1} f(a_1\otimes\dots\otimes a_n)a_{n+1}\,.
\end{align}
%

If $A$ is an associative algebra with a derivation $\partial\colon A\rightarrow A$, 
and $M$ is a differential bimodule over $A$
(i.e., the action of $\partial$ is compatible with the bimodule structure), 
we may consider the \emph{differential Hochschild cohomology complex} 
by taking the subspace of $n$-cochains
\begin{equation}\label{eq:cochaindiff}
\Hom_{\mb F[\partial]}(A^{\otimes n}, M)\,.
\end{equation}
It is clear by the definition \eqref{eq:hochdifferential} that the differential $d$ 
maps $\Hom_{\mb F[\partial]}(A^{\otimes n}, M)$ to $\Hom_{\mb F[\partial]}(A^{\otimes n+1}, M)$. 
Hence, we have a cohomology subcomplex.

\subsection{Differential Harrison cohomology complex} \label{ssec:Harrison}

Let us now recall Harrison's original definition of his cohomology complex \cite{Har62}, see also \cite{GS87,L13}. 
Let $A$ be a commutative associative algebra, and $M$ be a symmetric $A$-bimodule, 
i.e., such that $am=ma$, for all $a\in A$ and $m\in M$. 
For every $1<k\leq n$
define the following endomorphism on the space $\Hom(A^{\otimes n},M)$:
\begin{equation} \label{eq:lkf}
(L_kF)(a_1\otimes\dots\otimes a_n):= \sum_{\pi \in \mc M_n^k} (-1)^{dr(\pi)} F(a_{\pi(1)}\otimes\dots\otimes a_{\pi(n)})\,,
\end{equation}
where $\mc M_n^k$ is the set of monotone permutations starting at $k$,
defined in Section \ref{ssec:monotone}.

A \emph{Harrison $n$-cochain} is defined as a Hochschild $n$-cochain
$F\in\Hom(A^{\otimes n},M)$ fixed by all operators $L_k$:
\begin{equation} \label{eq:harrisoncond}
L_kF=F\,, \;\text{ for every } \; 2\leq k\leq n\,.
\end{equation}
We will denote by 
\begin{equation} \label{eq:harcochain}
C^n_{\Har}(A,M)\subset \Hom(A^{\otimes n},M)
\end{equation}
the space of Harrison $n$-cochains.

Furthermore, if $A$ is a differential algebra with a derivation $\partial:A\rightarrow A$, 
and $M$ is a symmetric differential bimodule, we may consider the space of 
\emph{differential Harrison} $n$-cochains
\begin{equation}\label{eq:harrisondiff}
C^{n}_{\partial,\Har}(A,M)\subset \Hom_{\mb{F}[\partial]}(A^{\otimes n},M)\,,
\end{equation}
again defined by Harrison's conditions \eqref{eq:harrisoncond}.
\begin{proposition}[{\cite{GS87,BDSKV19}}] \
\begin{enumerate}[(a)] 
\item
The Harrison cohomology complex $(C_{\Har}(A,M),d)$ is a subcomplex of the
Hochschild cohomology complex.
\item
If $A$ is a differential algebra, with a derivation $\partial:A\rightarrow A$, 
the differential Harrison cohomology complex $(C_{\partial,\Har}(A,M),d)$ is a subcomplex 
of the differential Hochschild cohomology complex.
\end{enumerate}
\end{proposition}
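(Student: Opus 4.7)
The plan is to verify, for each part, that the Hochschild coboundary $d$ of \eqref{eq:hochdifferential} preserves the defining conditions of the claimed subcomplex. For (a) this amounts to showing that if $F \in C^n_{\Har}(A,M)$, i.e.\ $L_k F = F$ for all $2 \le k \le n$, then $L_k(dF) = dF$ for all $2 \le k \le n+1$. For (b) it suffices additionally to check that $d$ preserves $\mb F[\partial]$-linearity, since the Harrison conditions \eqref{eq:harrisoncond} make no reference to $\partial$ and so the statement of (a) transfers verbatim to the differential setting.

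For part (a), I would first recast \eqref{eq:harrisoncond} in shuffle-theoretic language. Using the sign identity \eqref{eq:drop}, one establishes the equivalence: $F$ is a Harrison $n$-cochain if and only if $F$ annihilates every element of the form
\begin{equation*}
\sum_{\sigma \in S_{p,q}} \sign(\sigma)\, a_{\sigma^{-1}(1)} \otimes \cdots \otimes a_{\sigma^{-1}(n)}\,, \qquad p,q \ge 1,\ p+q = n.
\end{equation*}
Thus Harrison cochains are exactly those cochains that vanish on the image of the nontrivial shuffle product in $A^{\otimes n}$. By duality, (a) then reduces to the claim that the pre-dual Hochschild boundary $b$ preserves this shuffle subspace, under the hypotheses that $A$ is commutative and $M$ is a symmetric bimodule.

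I would verify that claim by a direct computation on a shuffle $S$ of type $(p,q)$. Applying $b$ produces three families of terms: internal contractions $a_i a_{i+1}$ of consecutive factors, and outer left/right actions of the first and last factors on $M$. Using the commutativity $a_i a_{i+1} = a_{i+1} a_i$, each internal contraction regroups into shuffles of types $(p-1,q)$ and $(p,q-1)$ with matching signs, while the two outer terms combine by the symmetry $am = ma$ to give yet another sum of shuffles. A careful sign bookkeeping, which is classical and carried out in \cite{Har62, GS87}, yields that $b$ preserves the shuffle subspace, hence $d(C^n_{\Har}) \subset C^{n+1}_{\Har}$.

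Part (b) is essentially formal: the differential \eqref{eq:hochdifferential} is built from the multiplication $A \otimes A \to A$ and the two bimodule actions $A \otimes M \to M$ and $M \otimes A \to M$, each of which is $\mb F[\partial]$-linear by the Leibniz rule and the compatibility of $\partial$ with the $A$-bimodule structure on $M$. Summing the $n+2$ terms of $dF$ and applying the Leibniz rule summand by summand shows $dF \circ (\textstyle\sum_i \partial_i) = \partial \circ dF$, so $dF \in \Hom_{\mb F[\partial]}(A^{\otimes n+1}, M)$. Combined with (a), this yields $d(C^n_{\partial,\Har}) \subset C^{n+1}_{\partial,\Har}$. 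The main obstacle throughout is the combinatorial step in (a)---the sign-matching in the shuffle-to-shuffle computation; (b) contributes no new difficulty once (a) is established.
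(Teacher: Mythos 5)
The paper does not actually prove this proposition: it is stated with a citation to \cite{GS87} and \cite{BDSKV19}, and the only argument supplied in the text is the (easy) observation that $d$ preserves $\mathbb{F}[\partial]$-linearity. Your sketch reconstructs the standard argument from those references, and it is correct in outline: recast the fixed-point conditions $L_kF=F$ as the vanishing of $F$ on the nontrivial $(p,q)$-shuffle products, then show that the Hochschild boundary preserves the span of shuffles (using commutativity of $A$ for the inner contractions and the symmetry $am=ma$ of $M$ for the two outer terms), and finally note that part (b) follows because the Harrison conditions and $\mathbb{F}[\partial]$-linearity are preserved independently. Be aware, though, that the two steps you defer are precisely where all the content sits: the equivalence of Harrison's $L_k$-conditions with the shuffle-vanishing condition is a genuine combinatorial theorem (valid over a field of characteristic $0$, which the paper assumes; it is essentially the identification of the Harrison cochains with the image of the first Eulerian idempotent $e_n^{(1)}$, cf.\ \cite{GS87} and \cite[4.5.13]{L13}), and the sign bookkeeping showing that $b$ is a derivation for the shuffle product is the classical but nontrivial computation of Harrison. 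Since you invoke rather than prove both, your write-up is a correct roadmap at the same level of detail as the paper itself, not a self-contained proof; if a full argument is wanted, those two lemmas must be supplied, for instance via Lemma \ref{lem:identity} (which is the graph-theoretic avatar of the first equivalence used elsewhere in the paper).
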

The cohomology of the complex $(C_{\partial,\Har}(A,M),d)$
is the \emph{differential Harrison cohomology} of $A$ with coefficients in $M$,
and is denoted by $H_{\partial,\Har}(A,M)$.
Clearly, $H^0_{\partial,\Har}(A,M)=M$
and $H^1_{\partial,\Har}(A,M)=\Der^\partial(A,M)$
is the space of all derivations from $A$ to $M$ commuting with $\partial$.
\begin{remark}\label{rem:har}
It follows from \cite{GS87} that $H^n_{\partial,\Har}(A,M)$
is a direct summand of the differential Hochschild cohomology, for $n\geq2$.
\end{remark}

\subsection{The sesquilinear Hochschild cohomology complex} \label{ssec:sesquilinear-hh}

Let $\mc V$ be an associative differential algebra with derivation $\partial$,
and let $M$ be a differential bimodule over $\mc V$.
Fix $s\geq 1$ and let, as in Section \ref{ssec:lines},
$$
\ul k=(k_1,\dots,k_s)\,\in\mb Z_{\geq0}^s
\,,\,\,\,\,
K_0=0
\,,\,\,\,\,
K_t=k_1+\dots+k_t\,, \quad t=1,\dots,s
\,,
$$
and
$$
n=K_s=k_1+\dots+k_s
\,.
$$
Given $v_1,\dots,v_n\in\mc V$,
we denote 
\begin{equation}\label{eq:vupper}
v_{\ul k}^t=v_{K_{t-1}+1}\otimes\dots\otimes v_{K_t}\in\mc V^{\otimes k_t}
\,,\,\, t=1,\dots,s\,,
\end{equation}
so that 
\begin{equation}\label{eq:vupper2}
v:=v_1\otimes\dots\otimes v_n=v_{\ul k}^1\otimes\dots\otimes v_{\ul k}^s
\,\in\mc V^{\otimes n}
\,.
\end{equation}
Note that we allow $k_t$ to be $0$,
and in this case $v_{\ul k}^t=1\in\mb F$.

The $s$-\emph{sesquilinear Hochschild cohomology complex} $(C_{\sesq,\Hoc}^s(\mc V,M),d)$ 
of $\mc V$ with coefficients in $M$,
is defined as follows.
First we introduce the space $C^{\ul k}_{\Hoc}$ of all linear maps  \begin{equation}\label{20200623:eq1}
F_{\Lambda_1,\dots,\Lambda_s}\,:\,\,
\mc V^{\otimes n}\to M[\Lambda_1,\dots,\Lambda_s]/\langle\partial+\Lambda_1+\dots+\Lambda_s\rangle
\,,\,\,
v\mapsto F_{\Lambda_1,\dots,\Lambda_s}(v)
\,,
\end{equation}
satisfying the sesquilinearity conditions ($t=1,\dots,s$),
\begin{equation}\label{20200623:eq2}
F_{\Lambda_1,\dots,\Lambda_s}(v_{\ul k}^1\otimes\cdots \partial v_{\ul k}^t\cdots\otimes v_{\ul k}^s)
=
-\Lambda_t
F_{\Lambda_1,\dots,\Lambda_s}(v)
\,.
\end{equation}
For every $t=1,\dots,s$, we define the $t$-th differential
$d^{(t)}:\,C^{\ul k}_{\Hoc}\to C^{\ul k+\ul e_t}_{\Hoc}$,
where $\ul e_t$ is the $s$-tuple with all $0$ except for $1$ in position $t$,
given by
\begin{equation}\label{20200623:eq4}
\begin{split}
& (d^{(t)}F)_{\Lambda_1,\dots,\Lambda_s}(v_1\otimes\dots\otimes v_{n+1}) \\
& =
(-1)^{K_{t-1}}
\bigl(\big|_{x=\partial}v_{K_{t-1}+1}\bigr)
F_{\Lambda_1,\dots,\Lambda_t+x,\dots,\Lambda_s}
(v_1\otimes\dots\overset{K_{t-1}+1}{\curlyvee}\dots\otimes v_{n+1})  \\
& +
\sum_{i=K_{t-1}+1}^{K_t}
(-1)^i
F_{\Lambda_1,\dots,\Lambda_s}
(v_1\otimes\dots\otimes v_iv_{i+1}\otimes\dots\otimes v_{n+1}) \\
& +
(-1)^{K_t+1}
F_{\Lambda_1,\dots,\Lambda_t+x,\dots,\Lambda_s}
(v_1\otimes\dots\overset{K_{t}+1}{\curlyvee}\dots\otimes v_{n+1}) 
\bigl(\big|_{x=\partial}v_{K_{t}+1}\bigr)
\,.
\end{split}
\end{equation}
In other words, up to the overall sign $(-1)^{K_{t-1}}$
and up to the shift by $\partial$ in the variable $\Lambda_t$,
this is the Hochschild cohomology differential of $F$,
viewed as a function of 
$v_{\ul k+\ul e_t}^t
=v_{K_{t-1}+1}\otimes\dots\otimes v_{K_t+1}$,
considering all other vectors $v_{\ul k+\ul e_t}^{t'}$ with $t'\neq t$ as fixed parameters.
In equation \eqref{20200623:eq4} and throughout the rest of the paper,
the substitution $|_{x=\partial}$ means that the polynomial in $x$ is expanded,
$x$ is replaced by $\partial$, and it is applied, in this case, 
to the vector $v_{K_{t-1}+1}$ in the first term of the right-hand side,
and to the vector $v_{K_{t}+1}$ in the last term.
\begin{remark}\label{rem:1016}
Note that $M[\Lambda_1]/\langle\partial+\Lambda_1\rangle\simeq M$.
Using this, we identify the $1$-sesquilinear Hochschild cohomology complex
with the differential Hochschild cohomology complex, defined in Section \ref{ssec:Hoch}.
\end{remark}
\begin{remark}\label{rem:1022}
Note that, for $s>1$, by the sesquilinearity condition \eqref{20200623:eq2}, 
we have $C^{\ul k}_{\Hoc}=0$ if one of the $k_i$'s is zero.
\end{remark}
\begin{theorem}\label{20200625:prop1}
For each $\ul k\in\mb Z_{\geq 0}^s$, 
equation \eqref{20200623:eq4} gives 
well defined maps 
$$
d^{(t)}\colon C^{\ul k}_{\Hoc}\to C^{\ul k+\ul e_t}_{\Hoc}
\,\,,\,\,\,\,
t=1,\dots,s
\,,
$$
which are anticommuting differentials:
$$
d^{(t)}d^{(t')}=-d^{(t')}d^{(t)}
\,\text{ for all }\, t,t'=1,\dots,s
\,.
$$
Hence, we get a $\mb Z_{\geq0}^s$-graded $s$-complex,
\begin{equation}\label{20200625:eq1}
\Big(\bigoplus_{\ul k\in\mb Z_{\geq0}^s} C^{\ul k}_{\Hoc},\, d^{(1)},\dots,d^{(s)}\Big)
\,.
\end{equation}
As a consequence, letting
\begin{equation}\label{20200625:eq2}
C^{s,n}_{\Hoc}=\bigoplus_{\ul k\,:\,K_s=n}C^{\ul k}_{\Hoc}
\,\,\text{ and }\,\,
d=\sum_{t=1}^sd^{(t)}
\colon C^{s,n}_{\Hoc}\to C^{s,n+1}_{\Hoc}
\,,
\end{equation}
we get a cohomology complex $\big(C_{\sesq,\Hoc}^s(\mc V,M)=\bigoplus_{n\geq0}C^{s,n}_{\Hoc},d\big)$.
\end{theorem}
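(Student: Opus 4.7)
The overall strategy is to observe that, up to the overall sign $(-1)^{K_{t-1}}$ and the substitution $\Lambda_t\mapsto\Lambda_t+x$ (with $x$ then playing the role of $\partial$ on an outer factor), the operator $d^{(t)}$ is the classical differential Hochschild coboundary applied to the $t$-th block of arguments, treating vectors in other blocks and the variables $\Lambda_{t'}$ for $t'\neq t$ as frozen parameters. This reduces each of the three claims (well-definedness, $(d^{(t)})^2=0$, and anticommutation) to standard facts about the differential Hochschild complex, together with careful sign bookkeeping.

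For well-definedness, one must check that $(d^{(t)}F)_{\Lambda_1,\dots,\Lambda_s}$ satisfies the sesquilinearity condition \eqref{20200623:eq2} in each block $t'$. For $t'\neq t$ this is immediate: the vectors and $\Lambda$-variable in block $t'$ are untouched by $d^{(t)}$, and the sesquilinearity of $F$ transfers term-by-term to give the desired $-\Lambda_{t'}$ scaling. For $t'=t$, one fixes all arguments outside block $t$ and all $\Lambda_{t''}$ with $t''\neq t$, and recognizes the restriction of $F$ as a differential Hochschild $k_t$-cochain with values in $M[\Lambda_1,\dots,\Lambda_s]/\langle\partial+\sum_i\Lambda_i\rangle$, viewed as a differential bimodule in which $\partial$ acts by $-\Lambda_t$ minus the other $\Lambda$'s. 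The differential Hochschild coboundary preserves this structure (cf. Remark \ref{rem:1016}), and the $\Lambda_t+x$ shifts in the two boundary terms of \eqref{20200623:eq4} are exactly what matches this bookkeeping.

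For $(d^{(t)})^2=0$, the same reduction applies: after freezing parameters, $d^{(t)}\circ d^{(t)}$ becomes $d_{\Hoc}^2$ on block $t$, up to a compound sign $(-1)^{2K_{t-1}}=1$ and a compound shift $\Lambda_t\mapsto\Lambda_t+x+x'$ that is symmetric in the two boundary variables and therefore does not disturb the classical pairwise cancellations. For the anticommutation $d^{(t)}d^{(t')}+d^{(t')}d^{(t)}=0$ with, say, $t<t'$, the two operators act on disjoint blocks of arguments, so they commute as unsigned modifications. The essential sign comparison is the following: in $d^{(t')}\circ d^{(t)}$, applying $d^{(t)}$ first increases $K_{t'-1}$ by one, so the leading sign $(-1)^{K_{t'-1}(\ul k+\ul e_t)}=(-1)^{K_{t'-1}+1}$ of $d^{(t')}$ picks up an extra minus; in the other order $(-1)^{K_{t-1}(\ul k+\ul e_{t'})}=(-1)^{K_{t-1}}$ is unchanged since $t-1<t'-1$. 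One checks that all middle-term signs $(-1)^i$ shift uniformly and consistently under the same index translation, while the two auxiliary variables $x$ (coming from $d^{(t)}$) and $x'$ (coming from $d^{(t')}$) act on disjoint blocks and hence the $\Lambda_t+x$ and $\Lambda_{t'}+x'$ substitutions commute exactly.

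The main obstacle is the careful pairwise verification in the anticommutation step: one must match the first, middle, and last terms of $d^{(t')}$ applied to each of the three types of terms of $d^{(t)}F$ against the symmetric pairing in the opposite order, and confirm that the index shifts induced by enlarging block $t$ (resp.\ block $t'$) produce exactly one extra sign. Once these verifications are complete, the theorem follows by setting $d=\sum_t d^{(t)}$ and computing
\begin{equation*}
d^2=\sum_{t,t'} d^{(t)}d^{(t')}
=\sum_{t}(d^{(t)})^2+\sum_{t<t'}\bigl(d^{(t)}d^{(t')}+d^{(t')}d^{(t)}\bigr)=0,
\end{equation*}
which together with the grading by $n=K_s$ yields the cohomology complex $(C^s_{\sesq,\Hoc}(\mc V,M),d)$.
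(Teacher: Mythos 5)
Your proposal is correct, and its skeleton coincides with the paper's: check that \eqref{20200623:eq4} is well defined and lands in $C^{\ul k+\ul e_t}_{\Hoc}$, prove $(d^{(t)})^2=0$, handle $t\neq t'$ by disjointness of the blocks plus sign bookkeeping, and assemble $d=\sum_t d^{(t)}$. The one place where you take a genuinely different route is the key identity $(d^{(t)})^2=0$: the paper expands $d^{(1)}(d^{(1)}F)$ completely and cancels the roughly dozen resulting terms in pairs, whereas you reduce it to the classical Hochschild identity $d_{\Hoc}^2=0$ on block $t$ with the other blocks frozen. That reduction is legitimate and much shorter, but to make it airtight you must exhibit the target of the block-$t$ restriction of $F$ as an honest differential $A$-bimodule, and your description of it is slightly off: the sesquilinearity \eqref{20200623:eq2} forces $\partial$ to act on that target by multiplication by $-\Lambda_t$ \emph{alone} (not by $-\Lambda_t-\sum_{t''\neq t}\Lambda_{t''}$, which is the action induced on the quotient $M[\Lambda_1,\dots,\Lambda_s]/\langle\partial+\sum_i\Lambda_i\rangle$ and would give the wrong scaling), while the left and right $A$-actions must be the twisted ones $a\cdot m=\bigl(\big|_{x=\partial}a\bigr)\,m\big|_{\Lambda_t\mapsto\Lambda_t+x}$ appearing in the boundary terms of \eqref{20200623:eq4}; the verification that these shifted actions are associative, compatible with $\partial=-\Lambda_t$, and well defined modulo $\langle\partial+\Lambda_1+\cdots+\Lambda_s\rangle$ rests on $\partial$ being a derivation, and is exactly the first check the paper carries out explicitly but which your write-up passes over with ``exactly what matches this bookkeeping.'' Once that bimodule is in place, your argument buys a conceptual proof of $(d^{(t)})^2=0$ and of the sesquilinearity of $d^{(t)}F$ in one stroke, at the cost of this extra setup. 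Conversely, your sign analysis for $t<t'$ --- every sign in $d^{(t')}$ is computed in $\ul k+\ul e_t$ and hence flips, while every sign in $d^{(t)}$ is computed in $\ul k+\ul e_{t'}$ and is unchanged --- is correct and is in fact more detailed than the paper's one-sentence justification of the anticommutation for $t\neq t'$.
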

\begin{proof}
%
In order to prove that $d^{(t)}$ is well defined,
we first check that, if 
$$
F_{\Lambda_1,\dots,\Lambda_s}(v)
=
(\partial+\Lambda_1+\dots+\Lambda_s)
G_{\Lambda_1,\dots,\Lambda_s}(v)
\,,
$$
for every $v\in\mc V^{\otimes n}$,
then the right-hand side of \eqref{20200623:eq4}
lies in $\langle\partial+\Lambda_1+\dots+\Lambda_s\rangle$.
Indeed, using the fact that $\partial$ is a derivation of the product in $\mc V$,
the first term of the right-hand side is equal to
$$
(-1)^{K_{t-1}}
(\partial+\Lambda_1+\dots+\Lambda_s)
\Big(
G_{\Lambda_1,\dots,\Lambda_t+\partial,\dots,\Lambda_s}
(v_1\otimes\dots\overset{K_{t-1}+1}{\curlyvee}\dots\otimes v_n)_\to v_{K_{t-1}+1}
\Big)\,.
$$
The second and third term are similar.


%
Next, we check that $d^{(t)}F$ satisfies the sesquilinearity conditions \eqref{20200623:eq1}
for every $t'\in\{1,\dots,s\}$ in place of $t$ and for $\ul k+\ul e_t$ in place of $\ul k$.
Let $v=v_{\ul k+\ul e_t}^1\otimes\dots\otimes v_{\ul k+\ul e_t}^1$
be the factorization of $v\in\mc V^{\otimes(n+1)}$ as in \eqref{eq:vupper2}.
If $\partial$ acts on the factor $v_{\ul k+\ul e_t}^{t'}$ with $t'\neq t$,
then in each term of the right-hand side of \eqref{20200623:eq4}
we get a factor of $-\Lambda_{t'}$,
by the sesquilinearity of $F$.
In the case when $t'=t$ we observe that 
$$
v_{\ul k+\ul e_t}^t
=
v_{K_{t-1}+1} \otimes w
\,,\text{ where }\,
w=v_{K_{t-1}+2} \otimes\dots\otimes v_{K_{t}+1}
\,.
$$
Then
$$
\partial v_{\ul k+\ul e_t}^t
=
\partial v_{K_{t-1}+1} \otimes w
+
v_{K_{t-1}+1} \otimes \partial w
\,.
$$
Hence, if we replace $v_{\ul k+\ul e_t}^t$ by $\partial v_{\ul k+\ul e_t}^t$ in 
$(d^{(t)}F)_{\Lambda_1,\dots,\Lambda_s}(v)$,
the first term in the right-hand side of \eqref{20200623:eq4}
becomes, up to the sign $(-1)^{K_{t-1}}$,
\begin{align*}
& F_{\Lambda_1,\dots,\Lambda_t+\partial,\dots,\Lambda_s}
(v_1\otimes\cdots \partial w\dots\otimes v_n)_\to v_{K_{t-1}+1} \\
& +
F_{\Lambda_1,\dots,\Lambda_t+\partial,\dots,\Lambda_s}
(v_1\otimes\cdots w\dots\otimes v_n)_\to \partial v_{K_{t-1}+1} \\
& =
F_{\Lambda_1,\dots,\Lambda_t+\partial,\dots,\Lambda_s}
(v_1\otimes\cdots w\dots\otimes v_n)_\to (-\Lambda_t-\partial) v_{K_{t-1}+1} \\
& +
F_{\Lambda_1,\dots,\Lambda_t+\partial,\dots,\Lambda_s}
(v_1\otimes\cdots w\dots\otimes v_n)_\to \partial v_{K_{t-1}+1} \\
& =
-\Lambda_t F_{\Lambda_1,\dots,\Lambda_t+\partial,\dots,\Lambda_s}
(v_1\otimes\cdots w\dots\otimes v_n)_\to  v_{K_{t-1}+1} 
\,.
\end{align*}
The other two terms in \eqref{20200623:eq4} are similar,
proving the sesquilinearity of $d^{(t)}F$.

Next, we prove that $d^{(t)}$ and $d^{(t')}$ anticommute for all $t,t'$.
For $t'\neq t$, $d^{(t)}$ and $d^{(t')}$ act on a different set of variables,
hence, due to the overall signs, they anticommute.
For $t'=t$ we need to show that $(d^{(t)})^2=0$,
which is similar to the proof that the square of the Hochschild differential is zero.
%
%
For simplicity of notation, let us check this for $t=1$.
Then $K_1=k_1$ will be denoted simply as $k$.
Applying formula \eqref{20200623:eq4} twice, we obtain:
\allowdisplaybreaks
\begin{align*}
& (d^{(1)}(d^{(1)}F))_{\Lambda_1,\dots}(v_1\otimes\dots\otimes v_{k+2}\otimes\cdots) \\
& =
\bigl(\big|_{x=\partial}v_1\bigr) (d^{(1)}F)_{\Lambda_1+x,\dots}
(v_2\otimes\dots\otimes v_{k+2}\otimes\cdots) \\
& +
\sum_{i=1}^{k+1}
(-1)^i
(d^{(1)}F)_{\Lambda_1,\dots}
(v_1\otimes\dots\otimes v_iv_{i+1}\otimes\dots\otimes v_{k+2}\otimes\cdots) \\
& +
(-1)^{k+2}
(d^{(1)}F)_{\Lambda_1+x,\dots}
(v_1\otimes\dots\otimes v_{k+1}\otimes\cdots)
\bigl(\big|_{x=\partial}v_{k+2}\bigr) \\
& =
\bigl(\big|_{x=\partial}v_1\bigr) 
\bigl(\big|_{y=\partial}v_2\bigr) 
F_{\Lambda_1+x+y,\dots}
(v_3\otimes\dots\otimes v_{k+2}\otimes\cdots) \\
& +
\sum_{j=2}^{k+1}
(-1)^{j-1}
\bigl(\big|_{x=\partial}v_1\bigr) 
F_{\Lambda_1+x,\dots}
(v_2\otimes\dots\otimes v_jv_{j+1}\otimes\dots\otimes v_{k+2}\otimes\cdots) \\
& + (-1)^{k+1}
\bigl(\big|_{x=\partial}v_1\bigr) F_{\Lambda_1+x+y,\dots}
(v_2\otimes\dots\otimes v_{k+1}\otimes\cdots)
\bigl(\big|_{y=\partial}v_{k+2}\bigr) \\
& -
\bigl(\big|_{x=\partial}(v_1v_2)\bigr) 
F_{\Lambda_1+x,\dots}
(v_3\otimes\dots\otimes v_{k+2}\otimes\cdots) \\
& + \sum_{i=2}^{k+1} (-1)^{i}
\bigl(\big|_{x=\partial}v_1\bigr) 
F_{\Lambda_1+x,\dots}
(v_2\otimes \dots \otimes v_iv_{i+1}\otimes\dots\otimes v_{k+2}\otimes\cdots) \\
& + \sum_{i=3}^{k+1}\sum_{j=1}^{i-2}
(-1)^{i+j}
F_{\Lambda_1,\dots}
(v_1\otimes \dots \otimes v_jv_{j+1}\otimes\dots  \otimes v_iv_{i+1}\otimes\dots \otimes v_{k+2}\otimes\cdots) \\
& - \sum_{i=2}^{k+1}
F_{\Lambda_1,\dots}
(v_1\otimes \dots \otimes v_{i-1}v_iv_{i+1}\otimes\dots \otimes v_{k+2}\otimes\cdots) \\
& + \sum_{i=1}^{k}
F_{\Lambda_1,\dots}
(v_1\otimes \dots \otimes v_iv_{i+1}v_{i+2} \otimes\dots \otimes v_{k+2}\otimes\cdots) \\
& + \sum_{i=1}^{k-1}\sum_{j=i+2}^{k+1}
(-1)^{i+j-1}
F_{\Lambda_1,\dots}
(v_1\otimes \dots \otimes v_iv_{i+1}\otimes\dots  \otimes v_jv_{j+1}\otimes\dots \otimes v_{k+2}\otimes\cdots) \\
& +
\sum_{i=1}^{k}
(-1)^{i+k+1}
F_{\Lambda_1+x,\dots}
(v_1\otimes\dots\otimes v_iv_{i+1}\otimes\dots\otimes v_{k+1}\otimes\cdots)
\bigl(\big|_{x=\partial}v_{k+2}\bigr) \\
& +
F_{\Lambda_1+x,\dots}
(v_1\otimes\dots\otimes v_{k}\otimes\cdots)
\bigl(\big|_{x=\partial}(v_{k+1}v_{k+2})\bigr) \\
& +
(-1)^k
\bigl(\big|_{x=\partial}v_1\bigr) 
F_{\Lambda_1+x+y,\dots}
(v_2\otimes\dots\otimes v_{k+1}\otimes\cdots)
\bigl(\big|_{y=\partial}v_{k+2}\bigr) \\
& +
\sum_{i=1}^k
(-1)^{k+i}
F_{\Lambda_1+x,\dots}
(v_1\otimes\dots\otimes v_iv_{i+1}\otimes\dots\otimes v_{k+1}\otimes\cdots)
\bigl(\big|_{x=\partial}v_{k+2}\bigr) \\
& -
F_{\Lambda_1+x+y,\dots}
(v_1\otimes\dots\otimes v_{k}\otimes\cdots)
\bigl(\big|_{x=\partial}v_{k+1}\bigr)
\bigl(\big|_{y=\partial}v_{k+2}\bigr)
\,.
\end{align*}
An inspection of the right-hand side shows that all terms
pairwise cancel with each other.
%
%
The remaining assertions of the theorem are an immediate consequence.
\end{proof}

The symmetric group $S_s$ acts naturally on each $C^{s,n}_{\Hoc}$ as follows.
A permutation $\sigma\in S_s$ maps $C^{\ul k}_{\Hoc}\to C^{\sigma(\ul k)}_{\Hoc}$,
where we recall that $\sigma(\ul k)=(k_{\sigma^{-1}(1)},\dots,k_{\sigma^{-1}(s)})$.
Given $F\in C^{\ul k}_{\Hoc}$,
its image $F^\sigma\in C^{\sigma(\ul k)}_{\Hoc}$ is given by
\begin{equation}\label{20200625:eq3}
(F^\sigma)_{\Lambda_1,\dots,\Lambda_s}(v)
=
\pm
F_{\Lambda_{\sigma^{-1}(1)},\dots,\Lambda_{\sigma^{-1}(s)}}
(v_{\ul k}^{\sigma^{-1}(1)}\otimes\dots\otimes v_{\ul k}^{\sigma^{-1}(s)})
\,,
\end{equation}
where the sign in the right-hand side is
\begin{equation}\label{20200716:eq2}
\pm=(-1)^{\sum_{t<t'\,:\,\sigma(t')<\sigma(t)}k_tk_{t'}}
\,,
\end{equation}
which is the Koszul sign obtained by permuting  
vectors $v_1,\dots,v_n$, viewed as having odd parity, 
according to \eqref{20200625:eq3}.
Moreover, for every $\sigma\in S_s$ and $t=1,\dots,s$, we have
\begin{equation}\label{20200625:eq4}
\sigma\circ d^{(t)}
=
d^{(\sigma(t))}\circ\sigma
\,.
\end{equation}

\subsection{The sesquilinear Harrison cohomology complex}\label{ssec:sesquilinear-harrison}

Let $\mc V$ be a commutative associative differential algebra,
and $M$ be a differential symmetric $\mc V$-bimodule.
We define the $s$-\emph{sesquilinear Harrison cohomology complex} $(C_{\sesq,\Har}^s(\mc V,M),d)$ 
as a subcomplex of the sesquilinear Hochschild cohomology complex of $\mc V$ with coefficients in $M$.
First, let $C^{\ul k}_{\Har}$ be the subspace of $C^{\ul k}_{\Hoc}$ 
consisting of all linear maps $F_{\Lambda_1,\dots,\Lambda_s}$ as in equation \eqref{20200623:eq1}
satisfying, in addition to the sesquilinearity conditions \eqref{20200623:eq2},
the following Harrison conditions ($1\leq t\leq s,\,2\leq m\leq k_t$):
\begin{equation}\label{20200623:eq3}
L_m^{(t)}F
:=
\sum_{\pi \in \mc M_{k_t}^m} (-1)^{dr(\pi)} F_{\Lambda_1,\dots,\Lambda_s}
(v_{\ul k}^1\otimes\cdots \pi^{-1}(v_{\ul k}^t) \cdots\otimes v_{\ul k}^s)
=
F_{\Lambda_1,\dots,\Lambda_s}(v)
\,,
\end{equation}
where $\mc M_{k_t}^m$ is the set of monotone permutations in $S_{k_t}$ starting at $m$, cf. \eqref{eq:lkf}.
\begin{proposition}\label{20200922:prop}
For every $\ul k\in\mb Z_{\geq0}$, $1\leq t,t'\leq s$ and $2\leq m\leq k_t$
we have
$$
d^{(t)}L_m^{(t')}=L_m^{(t')}d^{(t)}
\,.
$$
In particular, we obtain a cohomology subcomplex
$C_{\sesq,\Har}^s(\mc V,M)$
of $C_{\sesq,\Hoc}^s(\mc V,M)$
given by 
\begin{equation}\label{20200625:eq2b}
C^{s,n}_{\Har}=\bigoplus_{\ul k\,:\,K_s=n}C^{\ul k}_{\Har}
\,\,\text{ and }\,\,
d=\sum_{t=1}^sd^{(t)}
\colon C^{s,n}_{\Har}\to C^{s,n+1}_{\Har}
\,.
\end{equation}
\end{proposition}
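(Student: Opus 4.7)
The plan is to reduce the statement to the classical commutativity between the Harrison symmetrizers $L_m$ and the Hochschild differential, which is well documented in the literature (e.g.\ Harrison \cite{Har62}, Gerstenhaber--Schack \cite{GS87}). I would split the verification according to whether $t'\ne t$ or $t'=t$, since these two cases have very different flavors.

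First, for $t'\ne t$, the operator $L_m^{(t')}$ acts only on the $t'$-th block $v_{\ul k}^{t'}$, via a signed sum of monotone permutations of its $k_{t'}$ entries, while $d^{(t)}$ modifies only the $t$-th block $v_{\ul k}^t$ by the Hochschild-type expression \eqref{20200623:eq4}. Since the two blocks are disjoint, the actions commute literally, and one only has to track signs: the Koszul sign picked up by moving $L_m^{(t')}$ past the leading factor $(\,|_{x=\partial}v_{K_{t-1}+1})$ in $d^{(t)}$ (and past the trailing factor) is trivial because both vectors are treated as odd but the monotone permutation of the $t'$-block does not affect the total parity of the $t$-block. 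This case therefore reduces to a bookkeeping check.

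For $t'=t$ the statement is genuinely the Harrison identity $d\circ L_m = L_m\circ d$ for the ordinary Hochschild differential, applied to the $t$-th block with the remaining blocks and the $\Lambda_j$'s ($j\ne t$) held as parameters. More precisely, inspection of \eqref{20200623:eq4} shows that, for fixed outer data $v_{\ul k}^{t'}$ ($t'\ne t$) and fixed $\Lambda_j$ ($j\ne t$), the map
\[
(w_1,\dots,w_{k_t})\mapsto F_{\Lambda_1,\dots,\Lambda_s}(v_{\ul k}^1\otimes\cdots\otimes(w_1\otimes\cdots\otimes w_{k_t})\otimes\cdots\otimes v_{\ul k}^s)
\]
transforms under $d^{(t)}$ precisely like the Hochschild differential of an ordinary cochain on $\mc V$ with coefficients in the $\mc V$-bimodule
\[
M_t := M[\Lambda_1,\dots,\Lambda_s]\big/\langle\partial+\Lambda_1+\dots+\Lambda_s\rangle,
\]
provided one declares the left and right $\mc V$-actions on $M_t$ to be multiplication by $(|_{x=\partial+\Lambda_t}\,\cdot\,)$ on the appropriate side. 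Since $\mc V$ is commutative and $M_t$ is symmetric as a $\mc V$-bimodule under these shifted actions (the factor $\partial+\Lambda_t$ moves freely across $M_t$ because $M$ is a symmetric differential bimodule over $\mc V$), the Gerstenhaber--Schack commutativity $L_m\, d=d\, L_m$ applies verbatim to this shifted Hochschild differential. The resulting identity is exactly $d^{(t)}L_m^{(t)}=L_m^{(t)}d^{(t)}$.

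Granting the lemma, the consequence that $C_{\sesq,\Har}^s(\mc V,M)$ is a subcomplex of $C_{\sesq,\Hoc}^s(\mc V,M)$ is immediate: the Harrison conditions \eqref{20200623:eq3} say $L_m^{(t')}F=F$ for all admissible $(t',m)$, and applying $L_m^{(t')}$ to $dF=\sum_t d^{(t)}F$ and using the commutation proved above gives $L_m^{(t')}(dF)=dF$, so $dF\in C^{s,n+1}_{\Har}$. The only step that will require genuine care rather than formal bookkeeping is checking that the shifts $\Lambda_t\rightsquigarrow\Lambda_t+\partial$ that appear in the first and last terms of \eqref{20200623:eq4} are compatible with the sesquilinearity constraint imposed by $\langle\partial+\Lambda_1+\cdots+\Lambda_s\rangle$ when one expands the monotone permutation sum; this is the place where the argument is most likely to hide a sign, and I would verify it carefully for small $k_t$ (say $k_t=2,3$) before invoking the general Gerstenhaber--Schack identity.
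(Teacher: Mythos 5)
Your proposal is correct and follows essentially the same route as the paper: the case $t'\neq t$ is handled by noting the two operators act on disjoint blocks of variables, and the case $t'=t$ is reduced to the classical commutation of the Harrison shuffle operators with the Hochschild differential as in Gerstenhaber--Schack. Your identification of the shifted bimodule structure on $M[\Lambda_1,\dots,\Lambda_s]/\langle\partial+\Lambda_1+\cdots+\Lambda_s\rangle$ makes explicit what the paper leaves as ``a straightforward computation,'' but it is the same argument.
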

\begin{proof}
For $t\neq t'$,
the operators $d^{(t)}$ and $L_m^{(t')}$ commute
because they act on different sets of variables, $v_{\ul k}^t$ and $v_{\ul k}^{t'}$ respectively.
For $t=t'$, the equation $d^{(t)}L_m^{(t)}=L_m^{(t)}d^{(t)}$
holds by a straightforward computation,
which is similar to the proof that the Harrison cohomology complex is a subcomplex of the Hochschild complex,
see \cite{GS87}.
\end{proof}

\begin{proposition}\label{20200625:prop2}
Equation \eqref{20200625:eq3} gives a well defined action of the symmetric group $S_s$
on $C^{s,n}_{\Har}$, which maps $C^{\ul k}_{\Har}$ to $C^{\sigma(\ul k)}_{\Har}$.
Moreover,
$\sigma$ commutes with the differential $d$ in \eqref{20200625:eq2}.
\end{proposition}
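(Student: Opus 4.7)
The plan is to verify the two claims directly, leveraging the observation that the Harrison operators $L_m^{(t')}$ from \eqref{20200623:eq3} act only within the $t'$-th block of tensor factors, while the symmetric group $S_s$-action from \eqref{20200625:eq3} permutes whole blocks. Consequently the two operations essentially live on different kinds of variables, so they should interact cleanly once the Koszul bookkeeping is kept under control.

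First I would prove that $F \in C^{\ul k}_{\Har}$ implies $F^\sigma \in C^{\sigma(\ul k)}_{\Har}$. To do so, I fix $\sigma \in S_s$, fix an index $1 \le t \le s$ and $2 \le m \le k_{\sigma^{-1}(t)}$ (so that the $t$-th block of $\sigma(\ul k)$ has size $\ge m$), and compute $L_m^{(t)}(F^\sigma)$ unwinding the definition \eqref{20200625:eq3}. Since $L_m^{(t)}$ only permutes the vectors inside the $t$-th block, and since after applying $\sigma^{-1}$ to the block order the $t$-th block of $F^\sigma$'s argument is precisely $v_{\ul k}^{\sigma^{-1}(t)}$, I expect to identify
\[
L_m^{(t)}\bigl(F^\sigma\bigr)
\;=\;
\bigl( L_m^{(\sigma^{-1}(t))} F \bigr)^{\sigma}
\;=\;
F^\sigma,
\]
where the last equality uses the Harrison condition on $F$. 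The only subtlety is verifying that the Koszul sign \eqref{20200716:eq2} is unaffected by the inner monotone permutation $\pi \in \mc M_{k_{\sigma^{-1}(t)}}^m$. This is clear because $\pi$ only permutes factors within one block, so the cross-block sign contribution $(-1)^{\sum k_t k_{t'}}$ is unchanged. Likewise, sesquilinearity \eqref{20200623:eq2} of $F^\sigma$ inherited from $F$ is routine.

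Next I would verify that the $S_s$-action is compatible with the differential. The commutation with the sum $d=\sum_t d^{(t)}$ follows from the already-established identity \eqref{20200625:eq4}, namely $\sigma \circ d^{(t)} = d^{(\sigma(t))}\circ \sigma$, by summing over $t$:
\[
\sigma \circ d
\;=\;
\sum_{t=1}^s \sigma \circ d^{(t)}
\;=\;
\sum_{t=1}^s d^{(\sigma(t))} \circ \sigma
\;=\;
\Bigl( \sum_{t'=1}^s d^{(t')} \Bigr) \circ \sigma
\;=\;
d \circ \sigma,
\]
where in the third step I reindexed via $t' = \sigma(t)$. Combining this with Proposition \ref{20200922:prop} shows that $\sigma$ descends to an endomorphism of the Harrison subcomplex that commutes with $d$.

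The only genuinely delicate step is the Koszul sign verification in the first part, i.e., making sure that $L_m^{(t)}$ applied after a block permutation produces exactly $L_m^{(\sigma^{-1}(t))}$ applied before it, with no stray sign. I expect this to reduce to the observation that the permutation implementing $L_m^{(t)}$ on $F^\sigma$ factors as (block permutation by $\sigma^{-1}$) followed by (intra-block permutation by $\pi^{-1}$), and the Koszul sign of a composition equals the product of the Koszul signs of the factors; since $\pi$ is intra-block, its Koszul sign is simply $\mathrm{sign}(\pi)$ (equivalently $(-1)^{\dr(\pi)}$ after absorbing the $(-1)^{m-1}$ from \eqref{eq:drop}), which is exactly the coefficient already present in the definition \eqref{20200623:eq3} of $L_m^{(t)}$.
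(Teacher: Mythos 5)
Your proposal is correct and follows essentially the same route as the paper: the paper likewise observes that the Hochschild-level action $C^{\ul k}_{\Hoc}\to C^{\sigma(\ul k)}_{\Hoc}$ is already in place, that the Harrison conditions are preserved because $L_m^{(t)}$ acts inside a single block while $\sigma$ permutes whole blocks (your identity $L_m^{(t)}(F^\sigma)=(L_m^{(\sigma^{-1}(t))}F)^\sigma$ is the precise form of this), and that commutation with $d$ follows from \eqref{20200625:eq4}. Your explicit check that the Koszul sign \eqref{20200716:eq2} depends only on the block sizes, hence is unaffected by the intra-block monotone permutations, fills in a detail the paper leaves implicit.
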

\begin{proof}
Recall from the end of the previous subsection,
that we have an action $\sigma$ which maps $C^{\ul k}_{\Hoc}$
to $C^{\sigma(\ul k)}_{\Hoc}$.
We only need to check that this action preserves the Harrison conditions \eqref{20200623:eq3}.
This is true because $L_m^{(t)}$ acts on the vectors from the $t$-th group $v_{\ul k}^{(t)}$,
while $\sigma$ permutes the groups.
The claim that $\sigma$ commutes with $d$ follows from \eqref{20200625:eq4}.
\end{proof}
Thanks to Proposition \ref{20200625:prop2},
we get a cohomology subcomplex given by the $S_s$-invariants:
\begin{equation}\label{20200625:eq5}
\Big(
C_{\sym,\Har}^s(\mc V,M)
=
\bigoplus_{n\geq0}
(C^{s,n}_{\Har})^{S_s}
\,,\,
d
\Big)
\,,\qquad
s\geq1
\,.
\end{equation}
We will call this complex the \emph{symmetric} $s$-\emph{sesquilinear Harrison cohomology complex}
of $\mc V$ with coefficients in $M$.
The degenerate case $s=0$ corresponds to setting
$$
\ul k=\emptyset\,,\,\,
n=K_0=0\,,\,\,
v=1\in V^{\otimes0}=\mb F\,.
$$
In this case, the symmetric $(s=0)$-sesquilinear Harrison cohomology complex 
$C^{s=0}_{\sym,\Har}(\mc V,M)$ is concentrated in degree $n=0$
and it is equal to $M/\partial M$, with the zero differential.
\begin{remark}\label{rem:1016b}
As in Remark \ref{rem:1016},
we have $M[\Lambda_1]/\langle\partial+\Lambda_1\rangle\simeq M$,
and, using this, we identify the $(s=1)$-sesquilinear Harrison cohomology complex
with the differential Harrison cohomology complex, defined in Section \ref{ssec:Hoch}.
\end{remark}

\section{Relation between symmetric sesquilinear Harrison and classical PVA
cohomology complexes}\label{sec:6}

We introduce a filtration of the classical PVA complex $(C_{\cl},d)$ defined in Section \ref{sec:3.1}.
For a graph $\Gamma$ we let $s(\Gamma)$ be the number of connected components of $\Gamma$.
Recall that for an acyclic graph $\Gamma\in\mc G_0(n)$ with $n$ vertices, we have 
$s(\Gamma)=n-|E(\Gamma)|$.
We then let, for $s\in\mb Z$, 
\begin{equation}\label{20200702:eq1}
F_sC_{\cl}
=
\big\{Y\in C_{\cl}
\,\big|\,
Y^{\Gamma}=0 \text{ for every graph } \Gamma \text{ such that } s(\Gamma)<s
\big\}
\,.
\end{equation}
This defines a decreasing filtration of vector spaces.
Note that $F_sC_{\cl}=C_{\cl}$ for $s\leq0$,
because any graph $\Gamma$ with $n$ vertices and $|E(\Gamma)|>n$ has a cycle
and therefore $Y^\Gamma=0$ by definition.
The same argument also gives $F_1C^n_{\cl}=C^n_{\cl}$ for $n\geq1$,
because any non-empty  graph $\Gamma$ with $n$ vertices and $|E(\Gamma)|>n-1$ has a cycle.
However, $F_1C^0_{\cl}=0$, and
moreover, $F_sC_{\cl}^n=0$ for $s>n$, since $s(\Gamma)=n-|E(\Gamma)|\leq n$.
\begin{proposition}\label{20200702:prop}
The filtration \eqref{20200702:eq1} is preserved by the action of the differential $d$ defined by \eqref{eq:explicitform}.
\end{proposition}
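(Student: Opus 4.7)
My plan is to check directly from the explicit formula \eqref{eq:explicitform} that if $Y\in F_sC_\cl^n$, then $(dY)^\Gamma=0$ for every $\Gamma\in\mc G(n+1)$ with $s(\Gamma)<s$. The idea is to track the number of connected components of every graph $\Gamma'$ on $n$ vertices appearing as an argument of $Y^{\Gamma'}$ on the right-hand side of \eqref{eq:explicitform}, and to verify that in each case either $s(\Gamma')\le s(\Gamma)$ (so $Y^{\Gamma'}=0$ by the hypothesis on $Y$) or $\Gamma'$ contains a cycle (so $Y^{\Gamma'}=0$ by the cycle relations \eqref{eq:cycle1}).

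By the cycle relations we may assume from the outset that $\Gamma$ itself is acyclic. The four summands in \eqref{eq:explicitform} correspond to four operations producing $\Gamma'$ from $\Gamma$, and I would handle them in turn. First, deleting an isolated vertex gives $s(\Gamma')=s(\Gamma)-1$. Second, deleting a vertex of degree one leaves $s$ unchanged, since the absence of tadpoles guarantees that its unique neighbor survives in the same component and no component disappears. Third, collapsing two adjacent vertices $i,j$ leaves $s$ unchanged, as $i$ and $j$ already lay in the same component. Fourth, if we collapse two non-adjacent vertices $i,j$, we split into two subcases: if $i,j$ lie in different components then the collapse merges them so $s(\Gamma')=s(\Gamma)-1$, while if they lie in the same component then acyclicity forces the unique undirected path from $i$ to $j$ to have length $\ge 2$, and collapsing its endpoints turns this path into a cycle, so $Y^{\Gamma'}=0$ by \eqref{eq:cycle1}.

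In every case we have $s(\Gamma')\le s(\Gamma)<s$ or $\Gamma'$ carries a cycle, so every term on the right-hand side of \eqref{eq:explicitform} vanishes and $(dY)^\Gamma=0$. There is no genuine obstacle: the argument is a bookkeeping check once the differential is written out, and the only nontrivial combinatorial input is the cycle-creation in the third summand, which is immediate from the uniqueness of paths in a forest. I expect the same case analysis will later underlie the identification of $\gr_sC_{\cl}$ with the symmetric sesquilinear Harrison complex in Section \ref{sec:6}, by isolating the summands that preserve $s(\Gamma)$ modulo $F_{s+1}$.
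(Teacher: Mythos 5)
Your proposal is correct and follows essentially the same case analysis as the paper's own proof: the paper likewise examines the four summands of \eqref{eq:explicitform} and observes that removing an isolated vertex or collapsing non-adjacent vertices in distinct components drops $s(\Gamma)$ by one, removing a leaf or collapsing adjacent vertices preserves $s(\Gamma)$, and collapsing non-adjacent vertices in the same component produces a cycle. The only cosmetic difference is that you explicitly reduce to acyclic $\Gamma$ at the outset, which is harmless since $dY$ satisfies the cycle relations by definition.
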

\begin{proof}
For $Y\in F_sC_{\cl}^n$,
we need to prove that $dY\in F_sC_{\cl}^{n+1}$.
This means that, for any $\Gamma\in\mc G(n+1)$ such that $s(\Gamma)<s$,
we have $(dY)^\Gamma=0$.
Let us consider separately the four terms in the right-hand side of \eqref{eq:explicitform}.

First, if $\deg_\Gamma(h)=0$,
then $h$ is an isolated vertex of $\Gamma$
and $s(\Gamma\backslash h)=s(\Gamma)-1<s-1$,
so that $Y^{\Gamma\backslash h}=0$.
Second, if $\deg_\Gamma(h)=1$,
then $h$ is a leaf of $\Gamma$
and $s(\Gamma\backslash h)=s(\Gamma)<s$,
so that again $Y^{\Gamma\backslash h}=0$.
Third,  if $\epsilon_\Gamma(i,j)=0$,
then there is no edge connecting $i$ and $j$.
Hence when we collapse them into a single vertex,
either we get a loop in $\pi_{ij}(\Gamma)$, 
if $i$ and $j$ are in the same connected component of $\Gamma$,
or else $s(\pi_{ij}(\Gamma))=s(\Gamma)-1<s$.
In both cases $Y^{\pi_{ij}(\Gamma)}=0$.
Finally,  if $\epsilon_\Gamma(i,j)\neq0$,
then there is an edge connecting $i$ and $j$.
In this case $s(\pi_{ij}(\Gamma))=s(\Gamma)<s$,
and again $Y^{\pi_{ij}(\Gamma)}=0$.
In conclusion, all four terms in the right-hand side of \eqref{eq:explicitform}
vanish if $s(\Gamma)<s$, as claimed.
\end{proof}
As a consequence,
the $s$-degree component of the associated graded of the classical PVA complex
$$
\gr_sC_{\cl}
=
F_sC_{\cl}/F_{s+1}C_{\cl}
$$
is again a complex for any fixed $s\geq0$
with the induced action of the differential $d$.
Note that in the special case $s=0$ we have
$\gr_0C_{\cl}=C^0_{\cl}=V/\partial\mc V$,
which is concentrated in degree $n=0$.

By the proof of Proposition \ref{20200702:prop},
if $\Gamma\in\mc G(n+1)$ and $\deg_\Gamma(h)=0$,
then $s(\Gamma\backslash h)=s(\Gamma)-1$.
Moreover, if $\epsilon_\Gamma(i,j)=0$,
then either $\pi_{ij}(\Gamma)$ has a loop 
or $s(\pi_{ij}(\Gamma))=s(\Gamma)-1$.
As a consequence, for $Y\in F_sC_{\cl}^n$, the first and third term in the right-hand side 
of \eqref{eq:explicitform} vanish.
Therefore, we get the following explicit formula for the differential of 
$[Y]=Y+F_{s+1}C_{\cl}^n\in\gr^sC_{\cl}^n$,
evaluated at $\Gamma\in\mc G(n+1)$ with $s(\Gamma)=s$:
\begin{align} \label{eq:explicitform2}
&(d[Y])^{\Ga}_{\la_1,\ldots,\la_{n+1}}(v_1\otimes\ldots\otimes v_{n+1}) \notag \\
&=
\!\!\!
\sum_{\substack{h:\, \deg_{\Ga}(h)=1 \\ j:\, \epsilon_{\Ga}(j,h)\neq 0}} 
\!\!\!\!\!\!
(-1)^{\deg_{\Ga}^+(h)+n-h+1} 
Y^{\Ga\backslash h}_{\la_1,\ldots\overset{h}{\curlyvee}\ldots,\la_j+x,\ldots,\la_{n+1}} 
\!\!\!\!\!\!\!\!\!
(v_1\otimes\ldots\overset{h}{\curlyvee}\ldots\otimes v_{n+1}) 
\big(\big|_{x=\la_h+\partial} v_h\big) \notag \\
&+
\sum_{i<j} 
\epsilon_{\Ga}(i,j) 
(-1)^{n+i+j-1} 
Y^{\pi_{ij}(\Ga)}_{\la_i+\la_j,\la_1,\ldots \overset{i,j}{\curlyvee}\ldots,\la_{n+1}} 
(v_iv_j \otimes v_1\otimes\ldots \overset{i,j}{\curlyvee}\ldots\otimes v_{n+1})
\,.
\end{align}
\begin{theorem}\label{thm:whatever}
For every $s\geq0$ we have  an isomorphism of complexes
between the $s$-degree component of the associated graded of the classical PVA cohomology complex
and the symmetric $s$-sesquilinear Harrison cohomology complex:
\begin{equation}\label{eq:201016}
\gr_sC_{\cl}
\simeq
C_{\sym,\Har}^s
\,.
\end{equation}
Explicitly, for $Y\in F_sC_{\cl}^n$ and $\ul k\in\mb Z^s_{\geq0}$
such that $K_s=n$,
the image of the linear map 
$$
Y^{\Gamma_{\ul k}}\colon 
\mc V^{\otimes n}
\to
\mc V[\lambda_1,\dots,\lambda_n]
/\langle \partial+\lambda_1+\cdots+\lambda_n\rangle
$$
depends only on the sums 
\begin{equation}\label{20200707:eq2}
\Lambda_t
=
\lambda_{K_{t-1}+1}+\dots+\lambda_{K_t}
\,,\qquad
t=1,\dots,s
\,,
\end{equation}
and therefore it can be viewed as a linear map
$$
Y^{\Gamma_{\ul k}}\colon 
\mc V^{\otimes n}
\to
\mc V[\Lambda_1,\dots,\Lambda_s]
/\langle \partial+\Lambda_1+\cdots+\Lambda_s\rangle
\,.
$$
Then the isomorphism \eqref{eq:201016} maps
\begin{equation}\label{20200707:eq1}
Y+F_{s+1}C_{\cl}^n 
\mapsto
\sum_{\underline k\in\mb Z_{\geq0}^s\,:\,K_s=n} Y^{\Gamma_{\ul k}}
\in (C^{s,n}_{\Har})^{S_s}
\,.
\end{equation}
\end{theorem}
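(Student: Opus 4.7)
The plan is to construct the map \eqref{20200707:eq1} explicitly and verify it is a well-defined isomorphism of complexes by successively translating each structural feature of $\gr_s C_\cl$ into the corresponding one of $C^s_{\sym,\Har}$. First, by Proposition \ref{20200702:prop} together with the paragraph preceding the theorem, an element $[Y]\in\gr_s C_\cl^n$ is determined by the values $Y^\Ga$ on graphs with exactly $s$ connected components, since those with more components lie in $F_{s+1}$ (cycle relations already kill $s(\Ga)<s$ in $F_s$). By Theorem \ref{thm:lines}, a basis of $\mb F\mc G(n)/\mc R(n)$ is $\mc L(n)$, so these values are captured by $Y^\Ga$ for $\Ga$ a proper $\underline k$-line with $\underline k\in\mb Z_{\geq 1}^s$ and $K_s=n$. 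Moreover, the skewsymmetry \eqref{eq:actclassicoperad} reduces any proper $\underline k$-line to the standard one $\Ga_{\underline k}$, so the class $[Y]$ is encoded by the tuple $(Y^{\Ga_{\underline k}})_{\underline k}$.

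Next, I translate the classical PVA conditions on $Y^{\Ga_{\underline k}}$ into the defining conditions of a symmetric sesquilinear Harrison cochain. The first sesquilinearity \eqref{eq:sesq1} applied to $\Ga=\Ga_{\underline k}$ forces $Y^{\Ga_{\underline k}}$ to depend on the $\lambda_i$'s only through the partial sums $\Lambda_t$ of \eqref{20200707:eq2}; the second sesquilinearity \eqref{eq:sesq2} then becomes exactly condition \eqref{20200623:eq2}. To obtain the Harrison condition \eqref{20200623:eq3}, I apply Lemma \ref{lem:identity} inside the $t$-th connected component of $\Ga_{\underline k}$ (a $k_t$-line), which gives a cycle relation in $\mc R(n)$; evaluating $Y$ on it, using the skewsymmetry on the variables $\la_{K_{t-1}+1},\dots,\la_{K_t}$ and the sign formula \eqref{eq:drop}, yields precisely $L_m^{(t)}Y^{\Ga_{\underline k}}=Y^{\Ga_{\underline k}}$. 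Finally, applying skewsymmetry to the permutation of $\Ga_{\underline k}$ swapping whole connected components produces the Koszul sign \eqref{20200716:eq2} and shows that the tuple $(Y^{\Ga_{\underline k}})_{\underline k}$ is $S_s$-invariant in the sense of \eqref{20200625:eq3}.

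Bijectivity is then essentially bookkeeping. Injectivity is immediate from the reduction above: if every $Y^{\Ga_{\underline k}}$ vanishes, then $Y^\Ga=0$ for every proper $\underline k$-line of length $s$, hence $[Y]=0$. For surjectivity, given $F=\sum_{\underline k}F^{\underline k}\in(C^{s,n}_\Har)^{S_s}$, I set $Y^{\Ga_{\underline k}}=F^{\underline k}$, extend to all $\underline k$-lines by skewsymmetry, and to all $s$-component graphs using Theorem \ref{thm:lines}; the Harrison and $S_s$-invariance conditions on $F$ are exactly what is needed to make this extension well-defined modulo the cycle relations from Lemma \ref{lem:identity} and compatible with permutations among components. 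The degenerate case $s=0$ is a tautology since both complexes reduce to $\mc V/\dd\mc V$ in degree zero.

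The main work — and the main obstacle — is checking that the isomorphism intertwines the differentials. On the associated-graded side, the differential simplifies to \eqref{eq:explicitform2}, with only ``leaf-removal'' and ``edge-collapse'' terms surviving. Evaluating $(d[Y])^{\Ga_{\underline k+\underline e_t}}$ for each $t$ and then applying skewsymmetry to bring $\Ga_{\underline k+\underline e_t}\setminus h$ and $\pi_{ij}(\Ga_{\underline k+\underline e_t})$ back to standard form, I expect the two leaves of the $t$-th component to yield the first and third (boundary) terms of $d^{(t)}F^{\underline k}$ in \eqref{20200623:eq4}, while each internal edge-collapse within the $t$-th component yields one summand of the middle sum $(-1)^i F(\cdots v_iv_{i+1}\cdots)$; contributions involving other components reproduce the remaining summands of $d=\sum_t d^{(t)}$ indexed by different $\underline k'$. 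The delicate point is sign matching: the factors $(-1)^{\deg_\Ga^+(h)+n-h+1}$ and $(-1)^{n+i+j-1}$ from \eqref{eq:explicitform2} must recombine, after the skewsymmetry relabeling that moves the contracted vertex to its canonical position in $\Ga_{\underline k}$, with the Koszul signs from permuting odd vectors to produce exactly the overall $(-1)^{K_{t-1}}$ and the alternating signs $(-1)^i$ of the Hochschild formula. Once this sign bookkeeping is carried out on $\Ga_{\underline k+\underline e_t}$, the identification is componentwise and the map is a morphism of complexes.
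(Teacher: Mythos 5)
Your proposal is correct and follows essentially the same route as the paper's proof: reduction to the standard lines $\Gamma_{\ul k}$ via Theorem \ref{thm:lines} and skewsymmetry, derivation of the Harrison conditions from Lemma \ref{lem:identity}, $S_s$-invariance from permuting whole components, surjectivity by extending $F^{\ul k}$ from the standard lines (with well-definedness reducing to the $s=1$ case), and term-by-term matching of \eqref{eq:explicitform2} on $\Gamma_{\ul k}$ against \eqref{20200623:eq4}. The sign bookkeeping you flag as the delicate point is handled in the paper at essentially the same level of detail you indicate.
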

\begin{proof}
The case $s=0$ is obvious, so we shall assume $s\geq1$.
%
%
Clearly, for $\ul k\in\mb Z_{\geq0}^s$, we have $s(\Gamma_{\ul k})\leq s$.
Hence, for $Y\in F_{s+1}C_{\cl}^n$, we have $Y^{\Gamma_{\ul k}}=0$,
and the map \eqref{20200707:eq1} is well defined.

%
Next, we show that $Y^{\Gamma_{\ul k}}\in C^{\ul k}_{\Har}$ for each $\ul k\in\mb Z_{\geq0}^s$.
By the first sesquilinearity condition \eqref{eq:sesq1},
$Y^{\Gamma_{\ul k}}$ is a map $\mc V^{\otimes n}\to 
\mc V[\Lambda_1,\dots,\Lambda_s]/\langle\partial+\Lambda_1+\dots+\Lambda_s\rangle$,
since $\Lambda_t=\lambda_{(\Gamma_{\ul k})_t}$.
The second sesquilinearity condition \eqref{eq:sesq2} for $Y$
implies the sesquilinearity \eqref{20200623:eq2} of $Y^{\Gamma_{\ul k}}$.
Moreover, the Harrison conditions \eqref{20200623:eq3} for $Y^{\Gamma_{\ul k}}$
follow from Lemma \ref{lem:identity},
or more precisely from equation \eqref{eq:identity} applied to the $t$-th connected component 
$(\Gamma_{\ul k})_t=\Gamma_{k_t}$ of $\Gamma_{\ul k}$,
and the cycle relations \eqref{eq:cycle1} for $Y$.
Hence, $Y^{\Gamma_{\ul k}}\in C^{\ul k}_{\Har}$, as stated.

%
In order to check that the right-hand side of equation \eqref{20200707:eq1}
is invariant under the symmetric group $S_s$,
pick a permutation $\sigma\in S_s$ and consider its action on $Y^{\Gamma_{\ul k}}$,
for a fixed $\ul k\in\mb Z_{\geq0}^s$.
Using equation \eqref{20200625:eq3}, we find 
\begin{equation}\label{eq:bolsena5}
((Y^{\Gamma_{\ul k}})^\sigma)_{\Lambda_1,\dots,\Lambda_s}(v)
=
\pm
Y^{\Gamma_{\ul k}}_{\Lambda_{\sigma^{-1}(1)},\dots,\Lambda_{\sigma^{-1}(s)}}
(v_{\ul k}^{\sigma^{-1}(1)}\otimes\dots\otimes v_{\ul k}^{\sigma^{-1}(s)}) \\
\,.
\end{equation}
where $\pm$ is as in \eqref{20200716:eq2}.
Let $\widetilde{\sigma}\in S_n$ be the permutation 
\begin{equation}\label{eq:bolsena0}
\widetilde{\sigma}(K_{t-1}+i)
=
k_{\sigma^{-1}(1)}+\dots+k_{\sigma^{-1}(\sigma(t)-1)}+i
\,,\,\,
t=1,\dots,s,\,i=1,\dots,k_t
\,.
\end{equation}
This permutation is defined so that
\begin{equation}\label{eq:bolsena1}
v_{\widetilde{\sigma}^{-1}(1)}\otimes\dots\otimes v_{\widetilde{\sigma}^{-1}(n)}
=
v_{\ul k}^{\sigma^{-1}(1)}\otimes\dots\otimes v_{\ul k}^{\sigma^{-1}(s)}
\,.
\end{equation}
Indeed, we have, by \eqref{eq:vupper},
\begin{align*}
v_{\ul k}^{\sigma^{-1}(1)}&\otimes\dots\otimes v_{\ul k}^{\sigma^{-1}(s)}
=
(v_{K_{{\sigma^{-1}(1)}-1}+1}\otimes\dots\otimes v_{K_{\sigma^{-1}(1)}})
\otimes
(v_{K_{{\sigma^{-1}(2)}-1}+1}\otimes\dots
\\
&\dots\otimes v_{K_{\sigma^{-1}(2)}})
\otimes
\dots
\otimes
(v_{K_{{\sigma^{-1}(s)}-1}+1}\otimes\dots\otimes v_{K_{\sigma^{-1}(s)}})
\,.
\end{align*}
On the other hand, we obviously have
\begin{align*}
v_{\widetilde{\sigma}^{-1}(1)}&\otimes\dots\otimes v_{\widetilde{\sigma}^{-1}(n)}
=
(v_{\widetilde{\sigma}^{-1}(1)}\otimes\dots\otimes v_{\widetilde{\sigma}^{-1}(k_{\sigma^{-1}(1)})})
\otimes
(v_{\widetilde{\sigma}^{-1}(k_{\sigma^{-1}(1)}+1)}\otimes\dots
\\
&\dots\otimes 
v_{\widetilde{\sigma}^{-1}(k_{\sigma^{-1}(1)}+k_{\sigma^{-1}(2)})})
\otimes\dots\otimes
(v_{\widetilde{\sigma}^{-1}(k_{\sigma^{-1}(1)}+\dots+k_{\sigma^{-1}(s-1)}+1)}\otimes\dots
\\
&\dots\otimes 
v_{\widetilde{\sigma}^{-1}(k_{\sigma^{-1}(1)}+\dots+k_{\sigma^{-1}(s)})})
\,.
\end{align*}
The above two formulas match thanks to the definition \eqref{eq:bolsena0} of $\widetilde{\sigma}$
with $t$ replaced by $\sigma^{-1}(t)$.
Notice also that, for the same reason, (cf. \eqref{20200707:eq2})
\begin{equation}\label{eq:bolsena2}
\lambda_{\widetilde{\sigma}^{-1}(k_{\sigma^{-1}(1)}+\dots+k_{\sigma^{-1}(t-1)}+1)}
+\dots+
\lambda_{\widetilde{\sigma}^{-1}(k_{\sigma^{-1}(1)}+\dots+k_{\sigma^{-1}(t)})}
=
\Lambda_{\sigma^{-1}(t)}
\,,
\end{equation}
and
\begin{equation}\label{eq:bolsena3}
\widetilde{\sigma}(\Gamma_{\ul k})
=
\Gamma_{\sigma(\ul k)}
\,,
\end{equation}
or, equivalently, 
$\widetilde{\sigma}^{-1}(\Gamma_{\ul k})=\Gamma_{\sigma^{-1}(\ul k)}$.
We then use the skewsymmetry of $Y$ \eqref{eq:actclassicoperad} 
with respect to $\widetilde{\sigma}^{-1}$
evaluated on the graph $\Gamma_{\ul k}$:
\begin{equation}\label{eq:bolsena4}
Y^{\widetilde{\sigma}^{-1}(\Gamma_{\ul k})}_{\lambda_1,\dots,\lambda_n}
(v_1\otimes\cdots\otimes v_n)
=
\sign(\widetilde{\sigma})
Y^{\Gamma_{\ul k}}_{\lambda_{\widetilde{\sigma}^{-1}(1)},\dots,\lambda_{\widetilde{\sigma}^{-1}(n)}}
(v_{\widetilde{\sigma}^{-1}(1)}\otimes\cdots\otimes v_{\widetilde{\sigma}^{-1}(n)})
\,.
\end{equation}
Notice that the $\pm$ sign in \eqref{eq:bolsena5}
is precisely $\sign(\widetilde{\sigma})$.
Hence, combining equations \eqref{eq:bolsena5}--\eqref{eq:bolsena4},
we get
\begin{equation}
(Y^{\Gamma_{\ul k}})^\sigma
=
Y^{\Gamma_{\sigma^{-1}(\ul k)}}
\,.
\end{equation}
As a consequence,
the sum in the right-hand side of \eqref{20200707:eq1} is $S_s$-invariant,
as claimed.


Next, we observe that the map \eqref{20200707:eq1} is injective.
Indeed, if $\sum_{\underline k\in\mb Z_{\geq0}^s\,:\,K_s=n} Y^{\Gamma_{\ul k}}=0$
in $C^{s,n}_{\Har}=\bigoplus_{\ul k\,:\,K_s=n}C^{\ul k}_{\Har}$,
then $Y^{\Gamma_{\ul k}}=0$ for every $\ul k\in\mb Z_{\geq0}^s$,
and therefore, by Theorem \ref{thm:lines},
$Y^{\Gamma}=0$ whenever $s(\Gamma)\leq s$.
Hence, $Y\in F_{s+1}C^n_{\cl}$,
so its image in $\gr_sC^n_{\cl}$ is zero.


Now we prove that \eqref{20200707:eq1} is surjective.
Take an element 
$$
F=\sum_{\ul k}F^{\ul k}\in C^{s,n}_{\Har}=\bigoplus_{\ul k\,:\,K_s=n}C^{\ul k}_{\Har}
\,,
$$
which is invariant under the action of the symmetric group $S_s$.
We want to construct $Y\in F_s C^n_{\cl}$
such that $Y^{\Gamma_{\ul k}}=F^{\ul k}$ for every $\ul k\in\mb Z_{\geq0}^s$.
Note that, by Remark \ref{rem:1022},
we can restrict to $\ul k\in\mb Z_{>0}^s$.
In the degenerate case $s=1$ and $k_1=0$,
we have $n=0$ and in this case the claim is obvious.
First, we define $Y\in\mc P_{\cl}(\Pi\mc V)(n)$, see Section \ref{sec:3.4}.
Recall by Theorem \ref{thm:lines}
that the proper $\ul k$-lines $\Gamma\in\mc L(n)$, defined by \eqref{eq:unionlines}, \eqref{eq:min},
form a basis for the vector space $\mb F\mc G(n)/\mc R(n)$,
if $k_1\leq\dots\leq k_s$ and $i^l_1<i^{l+1}_1$ whenever $k_l=k_{l+1}$.
Hence, it is enough to define $Y^\Gamma$ for each proper $\ul k$-line $\Gamma$
satisfying these conditions.
Given such $\Gamma$, there is a permutation $\tau\in S_n$
such that $\Gamma=\tau(\Gamma_{\ul k})$,
and we set
\begin{equation}\label{20201022:eq2}
Y^\Gamma_{\lambda_1,\dots,\lambda_n}(v_1\otimes\dots\otimes v_n)
=
\sign(\tau)F^{\ul k}_{\Lambda_1,\dots,\Lambda_s}(v_{\tau(1)},\dots,v_{\tau(n)})
\,,
\end{equation}
where the $\Lambda_t$'s are as in \eqref{20200707:eq2}.
This is well defined, since if $\tau\in S_n$ fixes $\Gamma_{\ul k}$,
then $\tau=\widetilde{\sigma}$ for some $\sigma\in S_s$ fixing $\ul k$,
and in this case the right-hand side of \eqref{20201022:eq2}
equals $F^{\ul k}_{\Lambda_1,\dots,\Lambda_s}(v_1,\dots,v_n)$
by the $S_s$-symmetry of $F$.
The cycle relations \eqref{eq:cycle1} and the first sesquilinearity condition \eqref{eq:sesq1} 
on $Y$ hold by construction.
The second sesquilinearity condition \eqref{eq:sesq2} follows immediately from the 
sesquiinearity \eqref{20200623:eq2} of $F$.

We are left to check that the map $Y$ defined by \eqref{20201022:eq2}
satisfies the skewsymmetry \eqref{eq:actclassicoperad},
or equivalently, the $S_n$-invariance $Y=Y^\sigma$, $\sigma\in S_n$, with respect to the action
\eqref{eq:actSn}.
It is enough to check this separately in the cases when the permutation
only acts on the vertices of a single line,
or when it permutes the lines.
In the first case, the invariance condition reduces
to the case $s=1$, for which the sesquiinear Harrison complex is equivalent to the differential Harrison complex,
and the claim was proved in \cite[Lemm4.9]{BDSKV19}.
In the second case, when the permutation $\sigma$ permutes the lines,
the $\sigma$-invariance of $Y$ holds by construction.


Finally, we show that the map \eqref{20200707:eq1} commutes with the action 
of the differentials \eqref{eq:explicitform} and \eqref{20200625:eq2}.
Recall that the differential \eqref{eq:explicitform} induces, in the associated graded complex $\gr_s C_{\cl}$
the differential \eqref{eq:explicitform2}.
Let us evaluate the right-hand side of \eqref{eq:explicitform2} for $\Gamma=\Gamma_{\ul k}$.
In the first sum, $h$ is a vertex of degree $1$,
hence it must be the beginning or end point of one of the $s$ lines in $\Gamma_{\ul k}$,
and $j$ is the vertex adjacent to it in the line.
Hence, when $h$ is the first vertex of the $t$-th line, we get the first term of \eqref{20200623:eq4},
while when $h$ is the last vertex of the $t$-th line, we get the third term of \eqref{20200623:eq4}.
Furthermore, in the second sum of the right-hand side of \eqref{eq:explicitform2},
the only non-zero terms have $\epsilon_{\Gamma_{\ul k}}(i,j)=1$,
which means that $i$ and $j$ are consecutive vertices of the same line in $\Gamma_{\ul k}$.
When they are in the $t$-th line we recover the second term of \eqref{20200623:eq4}.
This completes the proof.
\end{proof}

\section{Vanishing of the sesquilinear Harrison cohomology}\label{sec:7}

In this section, we prove a vanishing theorem for the (symmetric)
sesquilinear Harrison cohomology, introduced in Section  \ref{ssec:sesquilinear-harrison}.
First, we recall some basic facts about the Hochschild homology and
cohomology, and a weak form of the Hochschild--Kostant--Rosenberg (HKR)
Theorem. Next, we give the proof by P. Etingof of an analogous statement
for the differential Hochschild cohomology.
We generalize this to the sesquilinear Hochschild cohomology, introduced
in Section \ref{ssec:sesquilinear-hh}, to derive the vanishing theorem
for the (symmetric) sesquilinear Harrison cohomology.

\subsection{The Bar complex}
Let $A$ be an associative $\mb{F}$-algebra. Its \emph{Bar-resolution}
$B_\bullet(A)$ is a complex of
$A\text{-}A$-bimodules with 
\begin{equation}
B_k(A) = \underbrace{A \otimes \cdots \otimes A}_{k+2 \text{--times}}, \qquad k
\geq 0,
\end{equation}
where the differential $d\colon B_k(A) \rightarrow B_{k-1}(A)$ is given by 
\[ d \bigl( a_0 \otimes \cdots \otimes a_{k+1} \bigr)  = \sum_{i=0}^{k}
(-1)^i a_0 \otimes \cdots \otimes a_i a_{i+1} \otimes \cdots \otimes a_{k+1}, \qquad k \geq 1. \]
Let $A^{op}$ be $A$ with the opposite product and  $A^e = A \otimes A^{op}$.
Then $B(A)$ is a complex of left $A^e$-modules by letting
\[ (a \otimes b) \cdot a_1 \otimes \cdots \otimes a_{k+2} = a \cdot a_1 \otimes
\cdots \otimes a_{k+2} \cdot b. \]
Any  $A\text{-}A$-bimodule $M$ can be viewed 
as a right $A^e$-module by letting $m \cdot (a \otimes b) = b
\cdot m \cdot a$. Then 
\[ B_\bullet(A,M) :=  M \otimes_{A^e} B(A) \]
is a complex of $\mb{F}$-vector spaces. The homology of this complex is known as the \emph{Hochschild homology} of $A$
with coefficients in $M$ and is denoted by $HH_\bullet(A,M)$. 

Given an $A\text{-}A$-bimodule
$M$, we obtain a complex of $\mb{F}$-vector spaces 
\[ C^\bullet(A,M) := \Hom_{A\text{-}A\text{-bimod}} \bigl(B_\bullet(A), M\bigr) . \]
The homology of this complex is known as the \emph{Hochschild cohomology} of $A$
with coefficients in $M$. It is easy to see that this cohomology coincides with
the one defined in Section \ref{ssec:Hoch}. 

For a unital algebra $A$, we will use the normalized Hochschild complex
$\bar{C}^\bullet(A,M)$ \cite[1.5.7]{L13} consisting on Hochschild cochains $f
\in C^\bullet(A,A)$ vanishing on elements of the form $a_0 \otimes \cdots
\otimes a_k$, where one of the $a_j$ is $1$. The inclusion $\bar{C}^\bullet(A,A)
\hookrightarrow C^\bullet(A,A)$ is a quasi-isomorphism. Indeed the map
\[ a_1 \otimes \cdots \otimes a_{k+2} \rightarrow 1 \otimes a_1 \otimes \cdots
\otimes a_{k+2}, \]
induces a homotopy between the identity map of $C^\bullet(A,A)$ and its
projection to $\bar{C}^\bullet(A,A)$. 
 Suppose that the algebra
$A$ is unital and augmented, with an augmentation ideal $A_+$; in this case we have 
\begin{equation}
\bar{C}^i(A,A) = \Hom_{\mb{F}}(A_+^{\otimes i}, A).
\label{eq:augmented-hh}
\end{equation}

\subsection{K\"ahler differentials}
Let $A$ be an associative commutative $\mb{F}$-algebra, and $I \subset A \otimes
A$ be the kernel of the multiplication map $A\otimes A \rightarrow
A$. The $A$-module $\Omega^1_A := I / I^2$ is called the \emph{module of
K\"ahler differentials} of $A$. 

For an $A$-module $M$, a \emph{derivation of $A$ with values in $M$} is a
linear map $D \in \Hom_{\mb{F}}(A, M)$ satisfying 
\[ D(a\cdot b) = a \cdot D(b) + b \cdot D(a). \]
The space of all derivations $\Der(A,M)$ is an $A$-module and we have
\[ \Der(A,M) \simeq \Hom_A(\Omega^1_A, M). \]
In particular, the identity map of $\Omega^1_A$ gives a derivation $d \in
\Der(A, \Omega^1_A)$; explicitly,
\[ d a = a \otimes 1 - 1 \otimes a  \mod I^2. \]
We define the module of $n$-forms by
\[ \Omega^n_A := \bigwedge\nolimits^n_A \Omega^1_A, \qquad n \geq 0. \]

Let $V$ be a $\mb{F}$-vector space, and consider the free
commutative associative unital algebra $A = S(V)$  generated by $V$. In this case,
\[ \Omega^n_{A} \simeq A \otimes \bigwedge\nolimits^n V, \]
since $\Omega^1_A$ is a free $A$-module of rank $=\dim V$.  We view
$\Omega^\bullet_A = \bigoplus_{n \geq 0} \Omega^n_A$ as a complex with zero
differential. 
We have the following map of complexes
$\varepsilon\colon \Omega^\bullet_A \rightarrow B_\bullet(A,A)$, called the
\emph{antisymmetrization map}, defined by 
\begin{equation} \label{eq:anti-symmetrization-hkr}
\varepsilon \bigl(a \otimes v_1 \wedge\cdots \wedge v_n \bigr) = \sum_{\sigma
\in S_n} \sign(\sigma) a \otimes
v_{\sigma^{-1}(1)} \otimes \cdots \otimes v_{\sigma^{-1}(n)}. 
\end{equation}

\begin{theorem}[{HKR Theorem \cite[Thm. 3.2.2]{L13}}] Let\/ $A = S(V)$ as above.
Then the antisymmetrization map $\varepsilon$, given by
\eqref{eq:anti-symmetrization-hkr},
is a quasi-isomorphism. In particular, we have an isomorphism $\varepsilon_* \colon
\Omega^n_A \xrightarrow{\sim}
HH_n(A,A)$ for all $n \geq 0$ induced in homology.  Its inverse is given by the 
surjective map 
\[ \pi_*\colon HH_\bullet(A,A) \rightarrow \Omega^\bullet_A, \qquad \pi_* \bigl(a_0
\otimes \cdots \otimes a_k \bigr)= a_0 da_1 \wedge \cdots \wedge da_k. \]
\label{thm:hkr}
\end{theorem}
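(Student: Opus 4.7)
My plan is to check that both $\pi$ and $\varepsilon$ are chain maps, deduce a normalization identity showing $\varepsilon_*$ is split injective, and then prove surjectivity of $\varepsilon_*$ via the Koszul resolution of $A = S(V)$ as an $A^e$-module.

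First, I would verify that $\pi$ is a chain map $B_\bullet(A,A) \to \Omega^\bullet_A$, the latter equipped with zero differential. Applied to the Hochschild boundary of $a_0 \otimes \cdots \otimes a_{k+1}$, the inner terms produce expressions $a_0 \, da_1 \wedge \cdots \wedge d(a_i a_{i+1}) \wedge \cdots \wedge da_{k+1}$, which split via the Leibniz rule into two summands each; the alternating signs combined with the antisymmetry of $\wedge$ give pairwise cancellation, and the end-terms $a_0 a_1 \otimes \cdots$ and $\cdots \otimes a_k a_{k+1}$ cancel by commutativity of $A$ against the missing $a_0$ or $a_{k+1}$ in $\pi$. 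A symmetric argument (or simply that $\varepsilon$ lands in the antisymmetric tensors, on which $b$ vanishes by $v_i v_j = v_j v_i$ against sign cancellation of transpositions) shows $\varepsilon$ is a chain map. A direct computation then gives
\[ \pi \bigl( \varepsilon(a \otimes v_1 \wedge \cdots \wedge v_n) \bigr) = \sum_{\sigma \in S_n} \sign(\sigma)\, a \, dv_{\sigma^{-1}(1)} \wedge \cdots \wedge dv_{\sigma^{-1}(n)} = n! \, a \, dv_1 \wedge \cdots \wedge dv_n, \]
using $\sign(\sigma)^2 = 1$. Since $\mathrm{char}\,\mb F = 0$, this already shows $\varepsilon_*$ is split injective with retraction $\tfrac{1}{n!} \pi_*$, and recovers the inverse-formula claim up to this standard normalization.

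To prove surjectivity of $\varepsilon_*$, I would compute $HH_\bullet(A,A)$ directly via the Koszul resolution
\[ K_\bullet = A \otimes \bigwedge\nolimits^\bullet V \otimes A, \qquad d(a \otimes v \otimes b) = av \otimes 1 \otimes b - a \otimes 1 \otimes vb, \]
extended as a derivation. This is a free resolution of $A$ over $A^e$: viewing $A^e \simeq S(V \oplus V)$, exactness is the Koszul exactness for the regular sequence $\{v \otimes 1 - 1 \otimes v\}_{v \in V}$, standard for finite-dimensional $V$ and extending to arbitrary $V$ by passing to filtered colimits since both sides commute with such colimits. Tensoring with $A$ over $A^e$ collapses the differential to zero and gives $A \otimes \bigwedge^\bullet V \simeq \Omega^\bullet_A$, so $HH_n(A,A) \simeq \Omega^n_A$ as vector spaces.

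Finally, I would construct a comparison chain map of $A^e$-resolutions $\Phi\colon K_\bullet \to B_\bullet(A)$ lifting $\mathrm{id}_A$. The natural lift $\Phi(1 \otimes v_1 \wedge \cdots \wedge v_n \otimes 1) = \tfrac{1}{n!} \sum_\sigma \sign(\sigma)\, 1 \otimes v_{\sigma^{-1}(1)} \otimes \cdots \otimes v_{\sigma^{-1}(n)} \otimes 1$ becomes, after applying $A \otimes_{A^e} (-)$, the map $\tfrac{1}{n!} \varepsilon$ on generators, hence everywhere by $A$-linearity; one checks directly that this is a chain map, using $\sum_\sigma \sign(\sigma)$-symmetrization tricks. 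Because any two lifts of $\mathrm{id}_A$ between projective resolutions are chain homotopic, the induced map on $\mathrm{Tor}$ is \emph{the} canonical isomorphism, whence $\varepsilon_*$ itself is an isomorphism with inverse $\pi_*$ under the normalization above. The main obstacle is the verification that the Koszul complex is genuinely a resolution; once reduced by filtered colimits to the finite-rank case and thence to the classical Koszul resolution of a regular sequence, it becomes routine. An alternative, avoiding resolutions entirely, is to equip $B_\bullet(A,A)$ with the Euler grading coming from the $\mb G_m$-action on $A = S(V)$ and construct an explicit contracting homotopy on each weight space outside the symmetric summand; I would fall back on that approach if the colimit argument over $V$ turned out to require care.
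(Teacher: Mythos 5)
The paper does not prove this statement: it is quoted verbatim from Loday \cite[Thm.\ 3.2.2]{L13} and used as a black box, so there is no internal proof to compare against. Your proposal is essentially the standard textbook proof (and in substance the one in Loday): verify $\pi$ and $\varepsilon$ are chain maps, compute $\pi\circ\varepsilon = n!\,\mathrm{id}$ to get split injectivity in characteristic zero, then identify $HH_\bullet(A,A)=\mathrm{Tor}^{A^e}_\bullet(A,A)$ via the Koszul resolution $A\otimes\bigwedge^\bullet V\otimes A$ of $A$ over $A^e$ (regular sequence $v\otimes 1-1\otimes v$, with a filtered-colimit reduction to finite-dimensional $V$ — which is the relevant case here, since the paper applies the theorem to the countably generated differential polynomial algebra), and finish by the uniqueness-up-to-homotopy of comparison maps between projective resolutions. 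This is a correct and complete strategy, and it is good that you flag explicitly that $\pi_*$ inverts $\varepsilon_*$ only up to the factor $n!$, a normalization the paper's statement glosses over.

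One concrete slip: the comparison map $\Phi(1\otimes v_1\wedge\cdots\wedge v_n\otimes 1)=\tfrac{1}{n!}\sum_\sigma \sign(\sigma)\,1\otimes v_{\sigma^{-1}(1)}\otimes\cdots\otimes v_{\sigma^{-1}(n)}\otimes 1$ is \emph{not} a chain map as written; already in degree $2$ one finds $d_B\Phi_2(1\otimes v\wedge w\otimes 1)=\tfrac12\,\Phi_1 d_K(1\otimes v\wedge w\otimes 1)$, because the $\tfrac{1}{n!}$ normalizations in adjacent degrees are incompatible with the differential. The unnormalized antisymmetrization $\Phi_n=\sum_\sigma\sign(\sigma)\,\sigma$ \emph{is} the chain map lifting $\mathrm{id}_A$, and it induces $\varepsilon$ (not $\tfrac{1}{n!}\varepsilon$) after applying $A\otimes_{A^e}(-)$; this still yields that $\varepsilon_*$ is the canonical isomorphism, which is all you need, with $\pi_*=n!\cdot(\varepsilon_*)^{-1}$ consistent with your earlier computation. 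Dropping the $\tfrac{1}{n!}$ from $\Phi$ repairs the argument with no other changes.
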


Similarly, we have:
\begin{theorem}
For $A = S(V)$ as above, the inclusion of complexes
\[ \pi^\sharp\colon
\bigwedge\nolimits^\bullet_A \Der(A,A) \hookrightarrow C_\bullet(A,A)\] 
is a quasi-isomoprhism. Consequently, we have an isomorphism of cohomology
groups
\[ \varepsilon^\sharp_* \colon HH^\bullet(A,A) \rightarrow \bigwedge\nolimits^{\bullet}_A \Der(A,A) \simeq A
\otimes \bigwedge\nolimits^\bullet V^*, \]
defined as the inverse of the map $\pi^\sharp_*$ induced by $\pi^\sharp$ in
cohomology. 
\label{thm:dualhkr}
\end{theorem}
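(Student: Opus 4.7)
The plan is to compute $HH^\bullet(A,A)=\mathrm{Ext}^\bullet_{A^e}(A,A)$ by replacing the bar resolution of $A$ with the smaller \emph{Koszul resolution} of $A$ as an $A^e=A\otimes A^{op}$-module, and then to identify $\pi^\sharp$ as an explicit cochain-level realization of the resulting isomorphism.

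For $A=S(V)$, introduce the complex $K_\bullet$ of free $A^e$-modules with $K_n = A \otimes \wedge^n V \otimes A$ and Koszul differential
\[
\partial(a\otimes v_1\wedge\cdots\wedge v_n\otimes b) = \sum_{i=1}^n (-1)^{i-1}\bigl(av_i\otimes v_1\wedge\cdots \hat v_i \cdots\wedge v_n\otimes b - a\otimes v_1\wedge\cdots \hat v_i \cdots\wedge v_n\otimes v_i b\bigr).
\]
Since $\{v\otimes 1 - 1\otimes v : v\in V\}$ is a regular sequence in $A\otimes A$ cutting out the diagonal, $K_\bullet\to A$ is an acyclic free resolution of $A$ over $A^e$ (when $V$ is infinite dimensional, this reduces via the filtered colimit over finite-dimensional subspaces $W\subset V$). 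Applying $\Hom_{A^e}(-,A)$, an $A^e$-linear map $K_n\to A$ is determined by its restriction to $1\otimes\wedge^n V\otimes 1$, and the induced differential vanishes because $av-va=0$ in the commutative algebra $A$. Since every $\mathbb{F}$-linear map $V\to A$ extends uniquely to a derivation of $S(V)$, obtain the natural identification
\[
\Hom_{A^e}(K_n,A)\simeq \Hom_\mathbb{F}(\wedge^n V,A)\simeq \wedge^n_A\Der(A,A),
\]
so $HH^n(A,A)\simeq \wedge^n_A\Der(A,A)$ by the comparison theorem for projective resolutions.

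It remains to show that this isomorphism is realized by $\pi^\sharp$. Explicitly, $\pi^\sharp$ sends a polyvector field $D_1\wedge\cdots\wedge D_n$ to the Hochschild $n$-cochain $(a_1,\dots,a_n)\mapsto \sum_{\sigma\in S_n}\sign(\sigma)\,D_{\sigma(1)}(a_1)\cdots D_{\sigma(n)}(a_n)$. A direct computation using the derivation property shows $d_{\mathrm{Hoch}}\circ\pi^\sharp=0$, so $\pi^\sharp$ is a chain map (with zero differential on the source). Fix a standard shuffle-type comparison chain map $\varphi\colon B_\bullet(A)\to K_\bullet$ lifting $\mathrm{id}_A$; the induced dual $\varphi^*\colon C^\bullet(A,A)\to \wedge^\bullet_A\Der(A,A)$ is a quasi-isomorphism realizing the isomorphism of the previous paragraph. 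A cochain-level check shows $\varphi^*\circ \pi^\sharp$ is a nonzero scalar multiple of the identity on $\wedge^\bullet_A\Der(A,A)$, so $\pi^\sharp$ is a quasi-isomorphism and $\varepsilon^\sharp_*=\varphi^*_*$ is its cohomological inverse (up to rescaling, which is automatic over a field of characteristic~$0$).

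The main technical obstacle is the infinite-dimensional case of Koszul acyclicity, which is relevant for the intended application to the algebra of differential polynomials (where $V$ is spanned by the derivatives $\partial^k u_i$); the reduction to the finite-dimensional case is clean since both $K_\bullet$ and $B_\bullet(A)$ commute with filtered colimits of subspaces $W\subset V$. A secondary bookkeeping point is identifying the precise scalar in $\varphi^*\circ\pi^\sharp$, which amounts to a standard combinatorial identity relating shuffles to antisymmetrization on $\wedge^\bullet V$.
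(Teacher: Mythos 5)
Your argument is correct, but it is worth noting that the paper does not actually prove Theorem \ref{thm:dualhkr}: it is stated as the cohomological counterpart (``Similarly, we have:'') of the homological HKR Theorem \ref{thm:hkr} cited from \cite[Thm.~3.2.2]{L13}. Your Koszul-resolution argument is therefore a genuine, self-contained substitute, and it is the right way to obtain the cohomological version --- one cannot simply apply $\Hom(-,A)$ to the quasi-isomorphism $\varepsilon$ of Theorem \ref{thm:hkr}, whereas comparing the two \emph{projective resolutions} $K_\bullet$ and $B_\bullet(A)$ of $A$ over $A^e$ (which are automatically homotopy equivalent) and then taking $\Hom_{A^e}(-,A)$ is legitimate; you use the colimit over finite-dimensional $W\subset V$ only to establish acyclicity of $K_\bullet$, not for the $\Hom$'s, which is the correct place to confine it. Two refinements. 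First, your final step is cleaner if you dispense with the comparison map $\varphi\colon B_\bullet(A)\to K_\bullet$ altogether: the antisymmetrization $\varepsilon\colon K_\bullet\to B_\bullet(A)$ is an explicit chain map, $\varepsilon^*$ is a quasi-isomorphism onto the zero-differential complex $\Hom_{\mb F}(\wedge^\bullet V,A)$, and a direct computation gives $\varepsilon^*\circ\pi^\sharp=n!\,\iota$ in degree $n$, where $\iota$ is the natural inclusion of $\bigwedge\nolimits^n_A\Der(A,A)$ into alternating $n$-multiderivations; since $n!\neq0$ in characteristic $0$, this forces $\pi^\sharp$ to be a quasi-isomorphism (your claim that $\varphi^*\circ\pi^\sharp$ is independent of the choice of $\varphi$ does hold, because the target has zero differential and homotopic chain maps agree on cocycles, but it requires this extra remark). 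Second, for infinite-dimensional $V$ --- which is the case needed later, where $V$ has basis $\{x_i^{(j)}\}$ --- the identification $\Hom_{\mb F}(\wedge^n V,A)\simeq\bigwedge\nolimits^n_A\Der(A,A)$ fails ($\iota$ is injective but not surjective for $n\geq2$): what your Koszul computation actually identifies $HH^n(A,A)$ with is the space of alternating $n$-multiderivations, i.e.\ the space $P^n$ used in Theorem \ref{thm:pasha}. This imprecision is already present in the statement of Theorem \ref{thm:dualhkr} itself, and your proof in fact establishes the version of the result that the paper subsequently uses.
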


\subsection{The differential setting} \label{ssec:etingof}
Let now $A$ be a differential associative algebra, that is an associative
algebra over $\mb{F}$ with a derivation $\partial$. Then the complex $B(A)$ is a
complex of $\mb{F}[\partial]$-modules. Given a differential $A$-bimodule $M$, we
have the complex of $\mb{F}$-vector spaces
\[ C_{\partial}^\bullet(A,M) := \Hom^\partial_{A\text{-}A\text{-bimod}}(B_\bullet(A),M), \]
where the $\Hom$ is taken in the category of differential $A\text{-}A$-bimodules. The
homology of this complex is the \emph{differential Hochschild cohomology} of $A$
with coefficients in $M$, denoted by $HH^\bullet_{\partial}(A,M)$. It is
clear that this definition coincides with the definition in Section \ref{ssec:Hoch}. 

Let $A = \mb{F}[x_i^{(j)} \, | \, 1 \leq i \leq N, \,j \geq 0]$ be a differential polynomial
algebra in $N$ variables $x_i = x_i^{(0)}$ and their
derivatives $\partial x_i^{(j)} := x_i^{(j+1)}$, $j \geq 0$. Let $A_+ \subset A$
be the augmentation ideal. We will need the following well known result, whose proof we provide for
completeness
\begin{lemma}  $A_+$ is free as an\/
$\mb{F}[\partial]$-module. 
\label{lem:a+isfree}
\end{lemma}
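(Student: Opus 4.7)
The plan is to exploit two compatible gradings on $A$ together with the injectivity of $\partial$ on $A_+$, and choose a free $\mb{F}[\partial]$-basis by induction on one of the gradings.

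First I would put a bigrading $A=\bigoplus_{n,w\geq 0}A_{n,w}$ on $A$ by declaring $\deg(x_i^{(j)})=1$ (total polynomial degree, giving the index $n$) and $\wt(x_i^{(j)})=j$ (differential weight, giving the index $w$), and extending multiplicatively. Then each component $A_{n,w}$ is finite-dimensional, and the Leibniz rule shows that $\partial$ preserves $n$ and shifts $w$ by $1$, i.e.\ $\partial\colon A_{n,w}\to A_{n,w+1}$. Since the kernel of $\partial$ on $A$ equals the constants $\mb{F}=A_{0,0}$, the map $\partial$ is injective on $A_{n,w}$ for every $n\geq 1$.

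The key step is the inductive choice of a complement. For each $n\geq 1$ and $w\geq 0$, pick a subspace $C_{n,w}\subset A_{n,w}$ with
\[
A_{n,w}=C_{n,w}\oplus \partial A_{n,w-1}
\]
(possible since we work over a field of characteristic zero and everything is finite-dimensional; use the convention $A_{n,-1}=0$). By induction on $w$, combined with the injectivity of $\partial$, this yields
\[
A_{n,w}=\bigoplus_{k=0}^{w}\partial^{k}\,C_{n,w-k}.
\]
Setting $C_n=\bigoplus_{w\geq 0}C_{n,w}$ and summing over $w$ then gives $A_n=\bigoplus_{k\geq 0}\partial^{k}C_n$, which is precisely the statement that $A_n$ is a free $\mb{F}[\partial]$-module on any $\mb{F}$-basis of $C_n$.

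Finally, assembling the pieces, $A_+=\bigoplus_{n\geq 1}A_n$ is a free $\mb{F}[\partial]$-module with basis $\bigsqcup_{n\geq 1}\mathcal{B}_n$, where $\mathcal{B}_n$ is any $\mb{F}$-basis of $C_n$. I do not expect any serious obstacle here: the only subtle point is to verify that $\partial$ actually preserves the bigrading and is injective on each $A_{n,w}$ with $n\geq 1$, both of which follow immediately from the Leibniz rule and the description of $\ker\partial=\mb{F}$ in a polynomial differential algebra.
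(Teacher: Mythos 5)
Your proof is correct. The decomposition $A_{n,w}=C_{n,w}\oplus\partial A_{n,w-1}$, iterated with the injectivity of $\partial$ on the positive-degree part, does yield $A_n=\bigoplus_{k\geq 0}\partial^k C_n$, which is exactly freeness; the two facts you flag as the only subtle points ($\partial$ preserves polynomial degree and raises weight by one, and $\ker\partial=\mb F$) are both immediate. The underlying mechanism is the same as in the paper --- finite-dimensional graded pieces, a choice of complement to the image of $\partial$ in each piece, and injectivity of $\partial$ on $A_+$ --- but your packaging is genuinely different and arguably cleaner in two respects. First, the paper uses a single combined grading (with $\deg\partial=1$) and therefore needs a weighted reverse lexicographic order on monomials to select the sets $\mathcal B_n$, followed by two separate arguments (an induction for spanning and an ordering argument for linear independence); your bigrading, in which $\partial$ preserves the polynomial degree $n$ and shifts only the weight $w$, lets you run a single induction on $w$ that delivers the direct-sum decomposition, hence spanning and independence simultaneously, and with no monomial order. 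Second, the paper first treats $N=1$ and then passes to general $N$ by writing the augmentation ideal of $A^{\otimes N}$ as a sum of tensor products of free $\mb F[\partial]$-modules, whereas your argument applies uniformly to all $N$ with no reduction step. The trade-off is minor: the paper's construction produces an explicit basis of monomials, while yours produces a basis only after choosing the complements $C_{n,w}$; for the purposes of the lemma (freeness, used later to kill $\Ext^1_{\mb F[\partial]}$ groups) this makes no difference. One cosmetic remark: characteristic zero is not needed to choose a complement of a subspace of a finite-dimensional vector space, so that parenthetical can be dropped.
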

\begin{proof}
Consider first the case when $A$ is
a differential polynomial algebra in one variable $x = x^{(0)}$, that is $A =
\mb{F}[x^{(0)},x^{(1)},\dots]$. 
An $\mb F$-basis of $A_+$ is
given by the monomials
\begin{equation}
 x^{(\lambda)} = x^{(\lambda_1)}\cdots x^{(\lambda_k)}, \qquad \lambda = \lambda_1 \geq \cdots \lambda_k \geq 0, \qquad k \geq 1. 
\label{eq:monomial-basis}
\end{equation}
We have
\begin{equation}
\partial x^{(\lambda)} = \sum_{i = 1}^k x^{(\lambda_1)}\cdots
x^{(\lambda_i+1)}\cdots
x^{(\lambda_k)}. 
\label{eq:partial-xs}
\end{equation}
The module $A_+$ is a graded $\mb F[\partial]$-module with  $\deg x^{(\lambda)}
= k + \sum_{i=1}^k \lambda_i$,
and $\deg \partial = 1$.  Notice that the homogeneous components $(A_+)_n$ of
degree $n$ are finite dimensional over $\mb F$. We consider the \emph{weighted reverse lexicographic
order} on the set of monomials \eqref{eq:monomial-basis}: for two
partitions $\lambda, \mu$ we let $x^{(\lambda)} > x^{(\mu)}$ if $\deg
x^{(\lambda)} > \deg x^{(\mu)}$ or $\deg x^{(\lambda)} = \deg x^{(\mu)}$ and
there exists $i_0 \geq 1$ such that $\lambda_i  = \mu_i$ for $1 \leq i \leq
i_0$ and $\lambda_{i_0} > \mu_{i_0}$.  This is a total ordering on the set of
monomials \eqref{eq:monomial-basis}. 

We construct an $\mb
F[\partial]$-basis of $A_+$ as follows. For each homogeneous degree component
$(A_+)_n$, we consider a set of monomials $\mathcal{B}_n \subset (A_+)_n$ such
that their images in $(A_+)_n \bigl\slash \partial (A_+)_{n-1}$ form an $\mb F$-basis.  
This set exists since we have a total ordering of a monomial basis of $(A_+)_n$
over $\mb F$. 
We let $\mathcal{B} = \coprod_{n \geq 1} \mathcal{B}_n$. We claim that
$\mathcal{B}$ is an $\mb F[\partial]$-basis of $A_+$. 

First, let us prove by induction that $\mathcal{B}$ spans $A_+$. Let $a \in A_+$ be a homogeneous element
of degree $n$. We prove by induction that $a$ can be written as a linear
combination with coefficients in $\mb F[\partial]$ of elements of $\mathcal{B}$.
When $n = 1$ there is nothing to prove as $(A_+)_1$ has as a basis $\mathcal{B}_1
= \{ x^{(0)} \}$. Assume that every homogeneous element of degree less than $n$
is in the $\mb F[\partial]$-span of $\mathcal{B}$. We can assume that $a$
is a monomial. By the definition of $\mathcal{B}_n$, there exist $b \in (A_+)_n$ and
$c \in (A_+)_{n-1}$ such that $a = b + \partial c$ with the property that $b$ is
an $\mb F$-linear combination of elements of $\mathcal{B}_n$. By our induction
hypothesis, $c$ (and therefore $\partial c$) can be written as an $\mb
F[\partial]$-linear combination of elements of
$\mathcal{B}$. Therefore, $\mathcal{B}$ spans $A_+$ over $\mb F[\partial]$. 

Let us now prove that the
elements of $\mathcal{B}$ are linearly independent over $\mb F[\partial]$.
Suppose we are given $b_1, \dots, b_r \in \mathcal{B}$ such that 
\begin{equation} \label{eq:ordering-linear-comb}
 \alpha_1 \partial^{j_1} b_1 + \cdots + \alpha_r \partial^{j_r} b_r = 0, \qquad \alpha_i \in
\mb F, \quad \alpha_i \neq
0, \quad j_i \geq 0. 
\end{equation}
We may assume that each summand is homogeneous of degree $n$ and that $j_1 \geq
\cdots \geq j_r$.  Since $\partial$ is injective on $A_+$, we may assume that
$j_r = 0$. Let $i_0$ be the minimum such that $j_{i_0} = 0$. Thus $b_i \in
\mathcal{B}_n$ for $i_0 \leq i \leq r$.  
It follows that 
$\sum_{i= i_0}^r \alpha_i b_i$ vanishes modulo $\partial (A_+)_{n-1}$, which contradicts our
choice of $\mathcal{B}_n$.
This proves that
$\mathcal{B}$ is an $\mb F[\partial]$-basis of $A_+$. 

For general $N$, writing $A^N$ in place of $A$ and denoting the case $N=1$ again by $A$, we have
an isomorphism of $\mb F[\partial]$-modules
\[A^N \simeq A^{\otimes N} = (A_+ \oplus \mb F 1)^{\otimes N}.\]
Hence, the augmentation ideal $(A^N)_+$ is a direct sum of 
tensor products of free $\mb{F}[\partial]$-modules, and so is free. 
\end{proof}

Now we introduce the subspace of poly-vector fields
\[ P^\bullet \subset \Hom_{\mb{F}} (A^{\otimes \bullet}, A), \]
i.e., alternating maps that are 
derivations in each argument. We consider $P^\bullet$ as a complex with the zero
differential. Since $A$ is a differential algebra, $P^\bullet$ is naturally an
$\mb{F}[\partial]$-module; let $P^\bullet_{\partial} = \ker \partial$.

\begin{theorem} Let\/ $A = \mb{F}[x_i^{(j)} | \, 1 \leq i \leq N, \, j
\geq 0]$ be a differential polynomial
algebra in $N$ variables and their derivatives. 
Then for all $k \geq 0$ we have an isomorphism 
\[ HH^k_{\partial}(A,A) \simeq P^k_\partial. \]
\label{thm:pasha}
\end{theorem}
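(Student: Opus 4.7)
The strategy is to upgrade the classical HKR quasi-isomorphism $\pi^\sharp\colon P^\bullet \hookrightarrow C^\bullet(A,A)$ of Theorem \ref{thm:dualhkr} to its $\partial$-equivariant version. Both $P^\bullet$ and the (normalized) Hochschild complex $\bar C^\bullet(A,A)$ carry a natural chain operator $L_\partial = [\partial,\cdot]$ commuting with the respective differentials, and $\pi^\sharp$ is $\partial$-equivariant, so it restricts to a chain map of kernels $P^\bullet_\partial \hookrightarrow \bar C^\bullet_\partial(A,A)$. Since the differential on $P^\bullet$ is identically zero, it suffices to prove this restricted map is a quasi-isomorphism.

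The critical step is surjectivity of $L_\partial$ on $\bar C^n(A,A)$ and on $P^n$ for $n\geq 1$. In the normalized complex, $\bar C^n(A,A)=\Hom_{\mb F}(A_+^{\otimes n},A)$, so it is enough that $A_+^{\otimes n}$ be a free $\mb F[\partial]$-module under the Leibniz action for $n\geq 1$---a strengthening of Lemma \ref{lem:a+isfree}. The key observation is that the Leibniz action on $\mb F[\partial]^{\otimes n}\simeq\mb F[u_1,\dots,u_n]$ is multiplication by $w=u_1+\cdots+u_n$, so the substitution $\mb F[u_1,\dots,u_n]\simeq\mb F[w]\otimes\mb F[\text{remaining variables}]$ exhibits it as free over $\mb F[w]$. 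Similarly, with $V=\bigoplus_i\mb F[\partial]\,x_i$, we have $P^n\simeq\Hom_{\mb F}(\bigwedge^n V,A)$, and $\bigwedge^n V\subset V^{\otimes n}$ is a submodule of a free $\mb F[\partial]$-module, hence itself free since $\mb F[\partial]$ is a PID; an explicit basis can be constructed by a weighted lexicographic ordering on exterior monomials $x_{i_1}^{(j_1)}\wedge\cdots\wedge x_{i_n}^{(j_n)}$, along the lines of Lemma \ref{lem:a+isfree}. Whenever the source is of the form $\mb F[\partial]\otimes W$, surjectivity of $L_\partial$ on $\Hom_{\mb F}(\mb F[\partial]\otimes W,A)$ is immediate from the recursion $f(\partial^{k+1}w)=\partial f(\partial^k w)-g(\partial^k w)$.

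Surjectivity in positive degrees yields short exact sequences of truncated chain complexes
\begin{equation*}
\begin{aligned}
&0\to \bar C^{\geq 1}_\partial(A,A) \to \bar C^{\geq 1}(A,A) \xrightarrow{L_\partial} \bar C^{\geq 1}(A,A)\to 0,\\
&0\to P^{\geq 1}_\partial \to P^{\geq 1} \xrightarrow{L_\partial} P^{\geq 1} \to 0,
\end{aligned}
\end{equation*}
linked by the $\partial$-equivariant map $\pi^\sharp$. Since $A$ is commutative, $d_0=0$ on $\bar C^\bullet$, so $H^k$ of the first truncated complex equals $HH^{k+1}(A,A)$, and likewise for the other complex. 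Identifying $HH^\bullet(A,A)\simeq P^\bullet$ via Theorem \ref{thm:dualhkr}, the long exact sequence in cohomology of the first SES becomes
\begin{equation*}
0 \to HH^1_\partial(A,A) \to P^1 \xrightarrow{L_\partial} P^1 \to HH^2_\partial(A,A) \to P^2 \xrightarrow{L_\partial} P^2 \to HH^3_\partial(A,A) \to \cdots.
\end{equation*}
Since $L_\partial$ is surjective on $P^n$ for every $n\geq 1$, each segment collapses to $HH^{n+1}_\partial(A,A)\simeq\ker(L_\partial|_{P^{n+1}})=P^{n+1}_\partial$ for $n\geq 0$. The case $k=0$ is immediate: $HH^0_\partial(A,A)=A^\partial=\mb F=P^0_\partial$. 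The principal obstacle is establishing the freeness of $\bigwedge^n V$ over $\mb F[\partial]$ with a clean basis; once that and the parallel statement for $A_+^{\otimes n}$ are in place, everything else is formal homological algebra driven by the classical HKR theorem.
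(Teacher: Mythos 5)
Your proposal is correct and is essentially the paper's own argument (due to Etingof): applying $R\Hom_{\mb F[\partial]}(\mb F,-)$ via the two-term Koszul resolution \eqref{eq:koszresol1} and the resulting two-row bicomplexes, as the paper does, is exactly your kernel--cokernel long exact sequence for the action of $\partial$, and both hinge on the same inputs --- the HKR quasi-isomorphism $P^\bullet\hookrightarrow\bar C^\bullet(A,A)$ of Theorem \ref{thm:dualhkr}, Lemma \ref{lem:a+isfree}, and surjectivity of $\partial$ on cochains in positive degree (equivalently, vanishing of the relevant $\mathrm{Ext}^1$ groups). The only cosmetic differences are that you truncate away degree $0$ where the paper instead tracks the extra $A/\partial A$ summands appearing in total degree $1$ on both sides, and that you deduce surjectivity of $\partial$ on $P^n$ from freeness of $\bigwedge^n(A_+/A_+^2)$ over $\mb F[\partial]$ (a valid direct-summand-of-free-over-a-PID argument) rather than from the paper's split injection of $P^n$ into the multiderivation space $T^n$.
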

\def\Ext{\mathrm{Ext}}
\begin{proof}(P. Etingof)
We consider the normalized Hochschild complex $\bar{C}^\bullet(A,A)$ defined in
\eqref{eq:augmented-hh}. 
It follows from Theorem \ref{thm:dualhkr} that $\pi^\sharp\colon P^\bullet \hookrightarrow
\bar{C}^\bullet(A,A)$ is a quasi-isomorphism. Notice also that the inclusion
$\pi^\sharp$ commutes with the $\mb{F}[\partial]$-action. That is, the complexes
$P^\bullet$ and $\bar{C}^\bullet(A,A)$ are quasi-isomorphic as complexes of
$\mb F[\partial]$-modules. Considering $\mathbb{F}$ as a trivial
$\mb F[\partial]$-module, it follows that we have a quasi-isomorphism of complexes
of vector spaces:
\begin{equation}
 R\Hom_{\mb{F}[\partial]}(\mb{F}, P^\bullet) \rightarrow
R\Hom_{\mb{F}[\partial]}(\mb{F}, \bar{C}^\bullet(A,A)), 
\label{eq:etingof1}
\end{equation} 
where $R\Hom$ is the right derived functor of $\Hom$, whose cohomology computes the $\Ext$ groups. 
To compute the cohomology of these complexes, we
consider the resolution 
\begin{equation} \label{eq:koszresol1} \mb{F}[\partial] \xrightarrow{\partial \cdot} \mb{F}[\partial]
\twoheadrightarrow \mb{F}. 
\end{equation}
We replace $\mb F$ by the two term complex $\mb F[\partial] \rightarrow \mb
F[\partial]$ in \eqref{eq:etingof1} and therefore the space of morphisms of $\mb
F[\partial]$-modules 
\[ \Bigl( \mb F[\partial] \xrightarrow{\partial \cdot} \mb F[\partial] \Bigr)
\rightarrow P^\bullet,  \qquad 
 \Bigl( \mb F[\partial] \xrightarrow{\partial \cdot} \mb F[\partial] \Bigr)
\rightarrow \bar{C}^\bullet(A,A),
\]
are naturally  bi-complexes of vector spaces. They consist of complexes with two
rows and infinitely many columns. Thus, the cohomology of the complexes in \eqref{eq:etingof1}
are given by the cohomology of the total complexes associated to the two-row
bicomplexes $P^\bullet \xrightarrow{\partial} P^\bullet$ and
$\bar{C}^\bullet(A,A) \xrightarrow{\partial} \bar{C}^\bullet(A,A)$. We compute
the vertical cohomology of the complex $P^\bullet \xrightarrow{\partial} P^\bullet$ first.

We claim that the map $\partial\colon P^i \rightarrow P^ i$ is surjective for $i \geq
1$. In fact, if we let $T^i \subset \bar{C}(A,A)^i$ be the subspace of all maps
that are derivations on each argument, we see that $P^i \hookrightarrow T^i$ is
a split injection since $T^i$ decomposes as a representation of the symmetric
group $S_i$ on $i$ elements. It suffices to prove that $\partial\colon T^i
\rightarrow T^i$ is surjective for $i \geq 1$. This is equivalent to showing
that
$\Ext^1_{\mb{F}[\partial]}(\mb{F}, T^i)=0$ for $i \geq 1$. Indeed, we may replace
$\mb F$ by \eqref{eq:koszresol1} and computing the $\Ext$ groups amounts to
computing the cohomology of the complex $T^i \xrightarrow{\partial} T^i$, which
vanishes in degree $1$ if and only if $\partial$ is surjective.
 Notice that 
\[ T^i = \Hom_{\mb{F}} \bigl( (A_+/A_+^2)^{\otimes i}, A \bigr),\] since a
derivation is determined on $A_+^2$ by the Leibniz rule. Also note that $A_+ /
A_+^2$ is a free $\mb{F}[\partial]$-module $M \simeq \mb{F}[\partial]^N$, with basis given by $\{ x_i^{(0)}
\}_{1 \leq i \leq N}$. Since $M$ is a free $\mb{F}[\partial]$-module, we obtain
\[ \Ext^1_{\mb{F}[\partial]}(\mb{F},T^i) = \Ext^1_{\mb{F}[\partial]}\left(\mb{F},
\Hom_{\mb{F}}\left(  M^{\otimes i}, A \right) \right)  = 0, \qquad i \geq 1. \]
Since the horizontal differentials of $P^\bullet \xrightarrow{\partial}
P^\bullet$ vanish (as the differential of $P^\bullet$ vanishes), we obtain that the
total cohomology of the bicomplex $P^\bullet\xrightarrow{\partial} P^\bullet$ is
given as follows. In degree $i \geq 2$, it is $P^i_{\partial}$, that is the
$\varphi^i \in P^i$ such that $\partial \varphi^i =0$. In degree $1$, we have
$P^1_\partial \oplus A / \partial A$, the first summand corresponds to the
vertical cohomology in degree $0$ of $P^1$ while the second is the vertical
cohomology of degree $1$ of $P^0 = A$. Finally, in degree $0$, we have
$P^0_{\partial} = \mb{F}$. 

We now consider the cohomology of the complex $\bar{C}^\bullet(A,A)
\xrightarrow{\partial} \bar{C}^\bullet(A,A)$ which computes the right-hand side of
\eqref{eq:etingof1}. 
It follows from Lemma \ref{lem:a+isfree} that 
\[ \Ext^1_{\mb{F}[\partial]}(\mb{F},\bar{C}_i(A,A)) =
\Ext^1_{\mb{F}[\partial]}(\mb{F}, \Hom_{\mb{F}}(A^{\otimes i}_+, A)) =
\Ext^1_{\mb{F}[\partial]}(A_+^{\otimes i}, A) = 0, \qquad i \geq 1. \]
Thus, the vertical differentials of $\bar{C}^\bullet(A,A)
\xrightarrow{\partial} \bar{C}^\bullet(A,A)$ are also surjective for $i \geq 1$.
The vertical cohomology of this bicomplex is therefore $\bar{C}^i_\partial(A,A)$
for $i \geq 1$, while in the first column we have the cohomology
$\bar{C}_\partial^0(A,A) = A^\partial = \mb{F}$ in degree $0$ and
$\bar{C}^0(A,A) / \partial \bar{C}^0(A,A) = A / \partial A$ in degree $1$.
Computing now the horizontal cohomology, we obtain that the total cohomology of
the bicomplex $\bar{C}^\bullet(A,A) \rightarrow \bar{C}^\bullet(A,A)$ consists
of $H^i(\bar{C}_\partial(A,A))$ for $i \geq 2$. In degree $1$ we have
$H^1(\bar{C}_\partial(A,A)) \oplus A/\partial A$, and in degree $0$ we have
$\mb{F}$. We have therefore obtained $H^i(\bar{C}_\partial(A,A)) \simeq
P^i_\partial$ for all $i\geq 0$ as claimed. 
\end{proof}
\subsection{The sesquilinear setting}
Let $A$ be an associative differential algebra and $s \geq 1$. Consider the
total complex of the $s$-complex
\[ B(A)^{\otimes s} = \underbrace{B(A) \otimes_A \cdots \otimes_A B(A)}_{s \text{ times}}. \]
This is a complex of $A$-$A$-bimodules and of $\mb{F}[\partial_1,\dots,\partial_s]$-modules. Let $M$ be
a differential $A$-$A$-bimodule. Define 
\[ \Delta^s M = M \otimes_{\mb{F}[\partial]} \mb{F} [\partial_1, \dots,
\partial_s], \]
where the left $\mb{F}[\partial]$-module structure on $\mb{F}[\partial_1,
\dots, \partial_s]$ is given by the diagonal map $\partial \mapsto \sum
\partial_i$.  Then $\Delta^s M$ is an $A$-$A$-bimodule and an
$\mb F[\partial_1,\dots,\partial_s]$-module. 
We have the complexes 
\[ C^{s, \bullet} = \Hom_{A\text{--}A \text{--bimod}}( B(A)^{\otimes s}, \Delta^s M), \qquad C^{s,
\bullet}_{\partial} = \Hom( B(A)^{\otimes s}, \Delta^s M),  \]
the $\Hom$ in the right-hand side being taken in the category of $A$-$A$-bimodules and
$\mb{F}[\partial_1,\dots,\partial_s]$-modules. It is clear from the definition
that $C^{s,\bullet}_{\partial}(A,A)$ coincides with the complex
$C^{s,\bullet}_{\Hoc}$ from \eqref{20200625:eq2}. 

\begin{remark} \label{rem:symmetricgroupaction}
Notice that the complexes $C^{s,\bullet}$ and $C^{s,\bullet}_\partial$ decompose
under the action of products of symmetric groups as follows. For each degree
$i$ and a partition $k_1 + \cdots + k_s = i$, the complex $C^{s,\bullet}$ has a direct summand consisting of maps 
\[ B^{k_1}(A) \otimes \cdots \otimes B^{k_s}(A) \rightarrow \Delta^s M. \]
The group $S_{k_1} \times \cdots \times S_{k_s}$ acts by permuting the entries on the left-hand side. The complex $C^{s, \bullet}$ is a direct sum of these symmetric group representations for all $i$ and all partitions. 
\end{remark}

Let now $A$ be in addition commutative,
let $\Omega^1_A$ be the module of K\"ahler differentials of $A$, and let 
\[ \Omega^\bullet_A = \bigwedge\nolimits^\bullet_A \Omega^1_A \]
be the module of differential forms. We consider $\Omega^\bullet_A$ as a complex with zero
differential. We have $P^\bullet = \Hom(\Omega^\bullet, A)$. 
Note that $\Omega^1_A$ and
therefore $\Omega^\bullet_A$ are differential $A$-modules. 
Hence
$P^\bullet_\partial = \Hom_{A-\mb{F}[\partial]}(\Omega^\bullet_A, A)$, where the $\Hom$ is taken in the category of differential $A$-modules. 

\begin{theorem} \label{thm:sesquihkr}
Let\/ $A = \mb{F}[x_i^{(j)}\,|\, 1 \leq i \leq N,\, j \geq 0]$ be a differential
polynomial algebra, and\/ $M$ be its differential module.  Then for every $i \geq 0$ we have isomorphisms
\[ 
\begin{aligned}
H^i( C^{s,\bullet}(A,M) ) &\simeq H^i \left( \Hom( (\Omega^\bullet_A)^{\otimes s}, \Delta^s M )\right), \\ 
H^i( C^{s,\bullet}_\partial(A,M) ) &\simeq H^i\left(  \Hom_{A-\mb{F}[\partial_1,\ldots,\partial_s]} ( (\Omega^\bullet)^{\otimes s}, \Delta^s M) \right).
\end{aligned}
\] 
\end{theorem}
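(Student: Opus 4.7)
The plan is to generalize both the Hochschild–Kostant–Rosenberg Theorems \ref{thm:hkr}–\ref{thm:dualhkr} and the Etingof argument from Theorem \ref{thm:pasha} to the $s$-fold tensor-power setting. For the first (non-differential) isomorphism, I would start with the antisymmetrization map $\varepsilon\colon\Omega^\bullet_A\otimes_A A^e\to B(A)$, which is a quasi-isomorphism of complexes of $A^e$-modules by HKR. Writing $V=\bigoplus_{i,j}\mb F x_i^{(j)}$, each $\Omega^n_A=A\otimes\bigwedge^n V$ is free as an $A$-module, so the Künneth formula implies that the $s$-fold tensor power
$\varepsilon^{\otimes_A s}\colon(\Omega^\bullet_A\otimes_A A^e)^{\otimes_A s}\to B(A)^{\otimes_A s}$
remains a quasi-isomorphism. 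Since both source and target are complexes of free $A^e$-modules in each multidegree, applying $\Hom_{A\text{-}A\text{-bimod}}(-,\Delta^s M)$ preserves the quasi-isomorphism. The bimodule adjunction $\Hom_{A^e}(A\otimes N\otimes A,L)\simeq\Hom_{\mb F}(N,L)$ then identifies the two resulting complexes with $C^{s,\bullet}(A,M)$ and $\Hom((\Omega^\bullet_A)^{\otimes s},\Delta^s M)$, respectively, giving the first isomorphism.

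For the second (differential) isomorphism, I would run in parallel on both sides the spectral-sequence argument of Theorem \ref{thm:pasha}, adapted from the $\mb F[\partial]$-setting to the $\mb F[\partial_1,\dots,\partial_s]$-setting. On each side one replaces the trivial $\mb F[\partial_1,\dots,\partial_s]$-module $\mb F$ by the Koszul resolution $\mb F[\partial_1,\dots,\partial_s]\otimes\bigwedge^\bullet(\mb F^s)\to\mb F$, producing an auxiliary bicomplex whose cohomology computes the relevant $\mathrm{Ext}$-group with coefficients in $C^{s,\bullet}(A,M)$ or in $\Hom((\Omega^\bullet_A)^{\otimes s},\Delta^s M)$. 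Computing vertical cohomology first, on the Bar side it concentrates in low degree thanks to a tensor-power version of Lemma \ref{lem:a+isfree}, showing that each $\partial_t$ acts surjectively on the appropriate summands of $B(A)^{\otimes_A s}$; on the $\Omega^\bullet_A$ side the same vanishing holds after splitting off the projective $\mb F[\partial_t]$-summands exactly as in the proof of Theorem \ref{thm:pasha}. Comparing the remaining horizontal cohomologies via the first isomorphism yields the second.

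The principal obstacle will be establishing these surjectivity/freeness statements for the $s$-fold tensor product: one must verify that the relevant subquotients of $B(A)^{\otimes_A s}$ are free over $\mb F[\partial_1,\dots,\partial_s]$ in a way that makes each $\partial_t$ surjective on positive-degree components, and then carefully bookkeep the Koszul spectral sequence across all $s$ variables simultaneously. This is the technical heart of the differential case, essentially a multi-complex adaptation of the single-variable $\mb F[\partial]$-argument in the proof of Theorem \ref{thm:pasha}, and it is the step where the most care is needed to ensure that the two spectral sequences under comparison degenerate at analogous pages.
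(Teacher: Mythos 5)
Your proposal is correct and follows essentially the same route as the paper: the first isomorphism via HKR plus flatness of $\Omega^\bullet_A$ to pass to $s$-fold tensor powers, and the second via the Koszul resolution of $\mb F$ over $\mb F[\partial_1,\dots,\partial_s]$, computing vertical cohomology of the resulting $(s+1)$-row bicomplexes first and using that the relevant tensor powers of $A_+$ (resp.\ $A_+/A_+^2$) are free over $\mb F[\partial_1,\dots,\partial_s]$ by Lemma \ref{lem:a+isfree}. The ``principal obstacle'' you identify is handled in the paper exactly as you anticipate, so no gap remains.
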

\begin{proof}
The first isomorphism is simply a consequence of the HKR Theorem \ref{thm:hkr} for $A$ stating
that $B(A)$ is quasi-isomorphic to $\Omega^\bullet_A$. Since the latter is a
free $A$-module, it is flat, and therefore $B(A)^{\otimes s}$ is quasi-isomorphic to $(\Omega^\bullet_A)^{\otimes s}$. The result follows by taking Homs into $\Delta^s M$. 

The quasi-isomorphism $B(A)^{\otimes s} \rightarrow (\Omega^\bullet_A)^{\otimes s}$ is a quasi-isomorphism of complexes of $A$-modules and $\mb F[\partial_1,\dots,\partial_s]$-modules. It follows that we have a quasi-isomorphism of complexes of $A$-modules and $\mb {F}[\partial_1,\dots,\partial_s]$-modules
\[ C^{s,\bullet}(A,M) \rightarrow \Hom_A \left( (\Omega_A^\bullet)^{\otimes s}, \Delta^s M \right), \]
and hence the following two complexes are quasi-isomorphic
\begin{equation} \label{eq:quasi-etingof-2}
 R\Hom_{\mb{F}[\partial_1,\ldots,\partial_s]} \bigl( \mb{F}, C^{s,\bullet}(A,M) \bigr) \rightarrow R\Hom_{\mb{F}[\partial_1,\ldots,\partial_s]} \bigl( \mb{F},\Hom_A \left( (\Omega_A^\bullet)^{\otimes s}, \Delta^s M \right)\bigr) .
\end{equation}
\def\Omfpar{\Omega_{\mb{F}[\partial_1,\ldots,\partial_s]}^1}
In order to compute the cohomology of \eqref{eq:quasi-etingof-2}, we use the
Koszul resolution of $\mb{F}$ as an
$\mb{F}[\partial_1,\dots,\partial_s]$-module. We consider the free module
$\Omfpar$ with a basis $d^1,\dots,d^s$ and the resolution 
\begin{equation} \label{eq:koszresol2} \cdots \rightarrow
\bigwedge\nolimits^k \Omfpar \rightarrow \bigwedge\nolimits^{k-1} \Omfpar \rightarrow \cdots \rightarrow \Omfpar
\rightarrow \mb{F}[\partial_1,\ldots,\partial_s] \rightarrow \mb {F}. 
\end{equation}
This resolution coincides with the two-term resolution \eqref{eq:koszresol1}
when $s=1$. 

The complex \eqref{eq:koszresol2} is non-negatively graded, with $\mb{F}[\partial_1,\dots,\partial_s]$ in degree 0. 
The Koszul differential is defined by $d^i \mapsto \partial^i$ and extending by
the Leibniz rule to a derivation of degree $-1$ of the free commutative superalgebra $\bigwedge\nolimits^\bullet \Omfpar$. 
Hence, in order to compute the cohomology of \eqref{eq:quasi-etingof-2}, we need to compute the cohomology of the total complexes with $s+1$ rows 
\begin{equation} \label{eq:twokoszulcomplexes} \bigwedge\nolimits^{\bullet} \mathbb{F}^s \otimes C^{s,\bullet}(A,M) \quad \text{and} \quad 
\bigwedge\nolimits^{\bullet} \mb{F}^s \otimes \Hom_A\left( (\Omega_A^\bullet)^{\otimes s},\Delta^s M \right). 
\end{equation}
We compute first the vertical cohomology of the complex on the right. We claim that for each column $i \geq 1$ the vertical cohomology in 
$\bigwedge\nolimits^{\bullet} \mb{F}^s \otimes \Hom\left( (\Omega_A^\bullet)^{\otimes
s},\Delta^s M \right)$ vanishes in positive degrees. Indeed, let $T^{s,\bullet}$
be the set of all maps in $C^{s,\bullet}(A,M)$ that are derivations in each
argument. We have a split injection $\Hom( (\Omega_A^\bullet)^{\otimes s},
\Delta^s M )$ $\hookrightarrow T^i$, since the latter splits as a representation of
the symmetric group as in Remark \ref{rem:symmetricgroupaction}. It suffices
then to prove that the vertical cohomology of $\bigwedge\nolimits^{\bullet} \mb{F}^s
\otimes T^{s,\bullet}$ vanishes in positive degrees. For each partition $k_1 + \cdots + k_s = i \geq 1$, the corresponding summand of $T^{s,\bullet}$ is given by maps 
\begin{equation} \label{eq:pasha3}
 (A_+/A_+^2)^{\otimes k_1} \otimes \cdots \otimes (A_+/A_+^2)^{\otimes k_s} \rightarrow \Delta^s M. 
\end{equation}
Notice that if some $k_i = 0$, the corresponding space of maps vanishes since there are no non-trivial derivations of $\mb{F}$. So we may assume that all $k_i > 0$. 
Since $A_+/A_+^2$ is free as an $\mb F[\partial]$-module, it follows that the left-hand side of \eqref{eq:pasha3} is free as an $\mb F[\partial_1,\dots,\partial_s]$-module. Hence 
\[ \Ext^j_{\mb F[\partial_1,\ldots,\partial_s]} (\mb F, T^i) = 0, \qquad i,j \geq 1, \] 
proving that the vertical cohomology of the second complex in
\eqref{eq:twokoszulcomplexes} vanishes in positive degrees for each column $i
\geq 1$. The zeroth column is given by the complex $\bigwedge\nolimits^{\bullet} \mb{F}
\otimes \Delta^s M$, where the differential is defined by $d^i \otimes m \mapsto
m \partial_i$ extended to a derivation of degree $-1$.  Since the horizontal differentials are zero, we obtain the following description of the total cohomology. In each degree $i \geq 1$, we have 
\begin{equation} H^i \left( \Hom_{A-\mb{F}[\partial_1,\ldots, \partial_s]}\left(
(\Omega^\bullet)^{\otimes s}, \Delta^s M \right) \right) \oplus H^{i} \left( \bigwedge\nolimits^{\bullet} \mb{F}^s \otimes \Delta^s M \right),
\label{eq:zerothcolsummand}
\end{equation}
where the first summand corresponds to the $i$-th horizontal cohomology of the zeroth row, while the second is the $i$-th vertical cohomology of the zeroth column. In degree $0$, we have $\mathbb{F}$. 

We now analyze the vertical cohomology of the first bicomplex in
\eqref{eq:twokoszulcomplexes}. We notice that, in the same way as in the proof
of Theorem \ref{thm:pasha},  for any partition $k_1 + \cdots + k_s = i$ where all $k_i > 0$, we have $(A_+)^{\otimes \sum k_j}$ is a free $\mb{F}[\partial_1,\dots,\partial_s]$-module. Hence, we obtain 
\[ \Ext^j_{\mb {F}[\partial_1, \ldots, \partial_s]} \left( \mb F, C^{s, j} \right) = 0, \qquad i,j \geq 1. \]
The cohomology is therefore again concentrated in the zeroth row and the zeroth
column. The zeroth vertical cohomology is given by $C^{s, \bullet}_\partial$,
while the zeroth column is given by $\bigwedge\nolimits^{\bullet} \otimes \Delta^s M$. We
see that the zeroth column contributes the same cohomology to the second summand
of \eqref{eq:zerothcolsummand}, while the horizontal cohomology of the zeroth
row is now given by $H^i\left(C^{s, \bullet}_{\partial}(A,M)\right)$, proving the theorem. 
\end{proof}
\subsection{The sesquilinear Hodge decomposition}
We recall here the \emph{Hodge decomposition} of the Hochschild cohomology of a commutative algebra $A$ with coefficients in its module $M$; see \cite{GS87,L13}. 
The symmetric group $S_n$ acts on
\[ C^n := C^n(A,M) \simeq \Hom (A^{\otimes n}, M) \]
by permuting the $n$ factors.
Recall the \emph{Eulerian idempotents} $e^{(i)}_n \in \mb Q[S_n]$ of the group algebra of $S_n$ (see \cite[4.5.2]{L13} for an explicit description). They satisfy
\[ 
\begin{gathered}
1 = e_n^{(1)} + \cdots + e^{(n)}_n,  \\ 
e^{(i)}_n e^{(j)}_n = 0, \quad \text{if} \quad i \neq j, \qquad \text{and} \quad e^{(i)}_n e^{(i)}_n = e^{(i)}_n. 
\end{gathered}
\]
It follows from \cite[4.5.10]{L13} that, putting $C^n_{(k)} := e_n^{(k)} C^n$,
and letting $HH^n_{(k)}(A,M) \subset HH^n(A,M)$ consist of cohomology classes of
elements in $C^n_{(k)}$, we obtain a direct sum decomposition 
\[ HH^n(A,M) = HH^n_{(1)}(A,M) \oplus \cdots \oplus HH^n_{(n)}(A,M), \qquad n
\geq 1.\]
The first summand $HH^n_{(1)}(A,M)$ is identified canonically with the Harrison cohomology $H^n(C^\bullet_{\Har}(A,M))$ 
by \cite[4.5.13]{L13}. The last summand is identified with polyvector fields \cite[4.5.13]{L13}:
\begin{equation} \label{eq:hodge-poly} HH^n_{(n)}(A,M) \simeq \Hom \left( \bigwedge\nolimits^n \Omega^1_A, M \right). 
\end{equation}

The above description generalizes to the sesquilinear setting. Recall from the
proof of Proposition \ref{20200625:prop2} that the complexes
$C^{s,\bullet}_{\partial}(A,M)$ are complexes in the category of representations
of symmetric group $S_s$, so that the action of the symmetric group $S_s$
as described in Section \ref{ssec:sesquilinear-hh} commutes with the differential. In
addition, it preserves the Harrison conditions \eqref{20200623:eq3}. For each
$s$ and $k_1 + \cdots + k_s = n$, we have an action of the product $S_{k_1}
\times \cdots \times S_{k_s}$ on $C^{s, \bullet}_{\partial}(A,M)$ by permuting
the entries and commuting with the differential. Consider the corresponding
\emph{Eulerian idempotents} $e^{(i)}_{k_j} \in \mb Q[S_{k_j}]$, for $i \geq 0$ and $j=1,\cdots,s$. 
For $\underline{i} = (i_1,\cdots,i_s)$ and $\underline{k} = (k_1,\cdots,k_s)$, we let 
\[ e^{(\underline{i})}_{\underline{k}} = e^{(i_1)}_{k_1} \otimes \cdots \otimes e^{(i_s)}_{k_s} \in \mb Q[S_{k_1} \times \cdots \times S_{k_s}]. \]
For each $\underline{i}$, we set
\[ C^{s, n}_{(\underline{i}), \partial} (A,M) = \bigoplus_{k_1 + \cdots + k_s = n}
e^{(\underline{i})}_{\underline k} C^{s, \underline{k}}_{\partial} (A,M).
\]
We obtain the corresponding decomposition of the sesquilinear Hochschild cohomology
\[H^n \bigl( C^{s,\bullet}_{\partial}(A,M) \bigr) = \bigoplus_{\underline{i}}
H^n \bigl( C^{s,\bullet}_{(\underline i), \partial}(A,M) \bigr). \]
Denote by $\underline{1}$ the $s$-tuple $(1,\dots,1)$. The summand
for $\underline{i} = \underline{1}$ is identified with the
sesquilinear Harrison cohomology in the same way as above:
\begin{equation} \label{eq:sesqui-harrison-hodge}
 H^n\bigl(C^{s,\bullet}_{\sesq,\Har} (A,M) \bigr) = H^n \bigl(
C^{s,\bullet}_{(\underline{1}), \partial}(A,M) \bigr). 
\end{equation}
In the other extreme case, we obtain from \eqref{eq:hodge-poly} the
identification of sesquilinear polyvector fields with the following sum
\begin{equation} \label{eq:sesqui-poly} 
H^n\left(  \Hom_{A-\mb{F}[\partial_1,\ldots,\partial_s]} (
(\Omega^\bullet)^{\otimes s}, \Delta^s M) \right) \simeq
\bigoplus_{k_1 + \dots + k_s=n} H^n \bigl( C^{s,\bullet}_{(\underline k),
\partial} (A, M) \bigr).
\end{equation}
\label{ssec:sesquilinear-hodge}

The main result of this section is the following:

\begin{theorem}\label{thm:reimundo}
Let\/ $A = \mb{F}[x_i^{(j)} \, | \, 1 \leq i \leq N, \, j \geq 0]$ be a differential
polynomial algebra, and\/ $M$ be its differential module. Then for every $n > s>0$
the sesquilinear Harrison cohomology of\/ $A$ with coefficients in $M$ vanishes:
\[  H^n\bigl(C^{s,\bullet}_{\sesq,\Har} (A,M) \bigr)  = 0. \]
\end{theorem}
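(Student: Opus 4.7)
The plan is to combine the sesquilinear HKR Theorem~\ref{thm:sesquihkr} with the sesquilinear Hodge decomposition of Section~\ref{ssec:sesquilinear-hodge}: two canonical descriptions of $H^n(C^{s,\bullet}_{\partial}(A,M))$ must agree, and forcing them to agree kills the Harrison summand in the relevant degrees.

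Concretely, by~\eqref{eq:sesqui-harrison-hodge} the sesquilinear Harrison cohomology is the single Hodge summand indexed by $\underline{i} = (1,\dots,1)$ in the decomposition
\[
H^n\bigl(C^{s,\bullet}_{\partial}(A,M)\bigr) \;=\; \bigoplus_{\underline{i}} H^n\bigl(C^{s,\bullet}_{(\underline{i}),\partial}(A,M)\bigr).
\]
On the other hand, Theorem~\ref{thm:sesquihkr} identifies $H^n(C^{s,\bullet}_{\partial}(A,M))$ with the cohomology of the sesquilinear poly-vector fields, which by~\eqref{eq:sesqui-poly} is exactly the sum over the ``top Hodge'' summands
\[
\bigoplus_{\underline{k}\,:\,k_1+\cdots+k_s=n} H^n\bigl(C^{s,\bullet}_{(\underline{k}),\partial}(A,M)\bigr),
\]
i.e.\ those where the Hodge index $\underline{i}$ coincides with the block index $\underline{k}$ and $|\underline{k}| = n$.

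Comparing the two canonical direct sum decompositions term-by-term forces the vanishing $H^n(C^{s,\bullet}_{(\underline{i}),\partial}(A,M)) = 0$ whenever $|\underline{i}| \neq n$. Specializing to $\underline{i} = (1,\dots,1)$, which has $|\underline{i}| = s < n$ by hypothesis, yields $H^n(C^{s,\bullet}_{\sesq,\Har}(A,M)) = 0$, as required. Note that Remark~\ref{rem:1022} forces every $k_j \geq 1$, so $n \geq s$ is automatic, and the sharp case $n = s$ is recovered by the single poly-vector summand with each $k_j = 1$.

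The main obstacle I expect is to verify that the Hodge decomposition really descends to a decomposition on cohomology compatible with $d = \sum_{t} d^{(t)}$. The subtlety is that $d^{(t)}$ shifts the block index $\underline{k}$ to $\underline{k}+\underline{e}_t$, so one must check that the Eulerian idempotent $e^{(i_t)}_{k_t}$ intertwines with $e^{(i_t)}_{k_t+1}$ under the Hochschild-type differential acting in the $t$-th tensor slot. This is the classical Gerstenhaber--Schack compatibility of Eulerian idempotents with Hochschild differentials, applied block by block; the overall sign $(-1)^{K_{t-1}}$ and the passive sesquilinear variable $\Lambda_t$ in~\eqref{20200623:eq4} do not interfere, since they commute with the symmetric group action. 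Once this block-by-block compatibility is secured, the comparison argument above closes the proof immediately.
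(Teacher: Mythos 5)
Your proposal is correct and follows essentially the same route as the paper: the paper's proof likewise combines Theorem \ref{thm:sesquihkr} with \eqref{eq:sesqui-poly} to see that $H^n(C^{s,\bullet}_{\partial}(A,M))$ is exhausted by the Hodge summands indexed by $\underline{k}$ with $k_1+\dots+k_s=n$, and then observes that for $n>s$ the index $\underline{1}$ cannot occur, so the Harrison summand \eqref{eq:sesqui-harrison-hodge} vanishes. The compatibility of the Eulerian idempotents with the differential that you flag as the main obstacle is exactly what the paper secures in Section \ref{ssec:sesquilinear-hodge} via the block-by-block $S_{k_1}\times\cdots\times S_{k_s}$ action and the references to \cite{L13} and \cite{GS87}.
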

\begin{proof}
Let $n > s>0$ and consider the sesquilinear Hochschild cohomology of $A$ with coefficients in $M$,
namely $H^n( C^{s, \bullet}_{\partial}(A,M))$. 
By Theorem \ref{thm:sesquihkr} and \eqref{eq:sesqui-poly}, we have an isomorphism 
\[ H^n( C^{s,\bullet}_\partial(A,M) ) \simeq 
\bigoplus_{k_1 + \dots + k_s=n} H^n \bigl( C^{s,\bullet}_{(\underline k),
\partial} (A, M) \bigr).
\]
If $n > s$ this implies that in the sum in the right-hand side we must have some $k_i >1$,
and hence $\underline k \neq \underline 1$. This implies that
\[ H^n\big(C_{(\underline 1),\partial}^{s,\bullet}(A,M) \bigr) = 0, \]
and therefore the theorem follows by \eqref{eq:sesqui-harrison-hodge}.
\end{proof}
\begin{corollary} \label{cor:reimundo}
With the notation of Theorem \ref{thm:reimundo}, for every $n > s > 0$ the
symmetric $s$-sesquilinear Harrison cohomology of $A$ with coefficients in $M$ vanishes:
\[  H^n\bigl(C^{s,\bullet}_{\sym,\Har} (A,M) \bigr)  = 0. \]
\end{corollary}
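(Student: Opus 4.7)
The plan is to deduce the corollary directly from Theorem \ref{thm:reimundo} by exploiting the fact that the symmetric Harrison complex is, by definition \eqref{20200625:eq5}, the subcomplex of $S_s$-invariants inside $C^{s,\bullet}_{\sesq,\Har}(A,M)$, together with the standard fact that taking invariants under a finite group action is an exact functor in characteristic $0$.

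More concretely, I would first recall that by Proposition \ref{20200625:prop2} the symmetric group $S_s$ acts on the sesquilinear Harrison complex by chain maps, so that the averaging operator
\[
\pi = \frac{1}{s!}\sum_{\sigma\in S_s}\sigma \;\colon\; C^{s,\bullet}_{\sesq,\Har}(A,M)\longrightarrow C^{s,\bullet}_{\sesq,\Har}(A,M)
\]
is a well-defined idempotent chain map with image $C^{s,\bullet}_{\sym,\Har}(A,M)$. This uses crucially that $\mb F$ has characteristic $0$, which is assumed throughout the paper.

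Next, I would observe that since $\pi$ is an idempotent endomorphism of complexes, the induced map $\pi^*$ on cohomology is an idempotent with image $H^n(C^{s,\bullet}_{\sym,\Har}(A,M))$, and moreover $\pi^*$ coincides with the averaging operator on the cohomology of $C^{s,\bullet}_{\sesq,\Har}(A,M)$. Equivalently, taking $S_s$-invariants commutes with taking cohomology:
\[
H^n\bigl(C^{s,\bullet}_{\sym,\Har}(A,M)\bigr)
=
H^n\bigl(C^{s,\bullet}_{\sesq,\Har}(A,M)\bigr)^{S_s}.
\]

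Finally, for $n>s>0$ Theorem \ref{thm:reimundo} gives $H^n(C^{s,\bullet}_{\sesq,\Har}(A,M))=0$, so its $S_s$-invariant subspace is also zero, which yields the corollary. There is no genuine obstacle here: the only thing one must be a little careful about is that the differential $d = \sum_t d^{(t)}$ on $C^{s,\bullet}_{\sesq,\Har}(A,M)$ is $S_s$-equivariant, but this is precisely the content of the relation \eqref{20200625:eq4} and the last sentence of Proposition \ref{20200625:prop2}.
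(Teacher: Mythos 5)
Your proof is correct and follows essentially the same route as the paper: the paper likewise invokes Proposition \ref{20200625:prop2} to view $C^{s,\bullet}_{\sesq,\Har}(A,M)$ as a complex of $S_s$-modules, notes that the symmetric subcomplex is a direct summand (which in characteristic $0$ is exactly your averaging idempotent), and concludes that its cohomology is a direct summand of the vanishing cohomology from Theorem \ref{thm:reimundo}. Your write-up merely makes the idempotent $\pi=\frac{1}{s!}\sum_{\sigma\in S_s}\sigma$ explicit, which is a fine elaboration of the same argument.
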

\begin{proof}
It follows from Proposition \ref{20200625:prop2} that the sesquilinear Harrison
cohomology complex
$C^{s,\bullet}_{\sesq,\Har}$ is a complex of $S_s$-modules. The symmetric
$s$-sesquilinear
Harrison cohomology complex $C^{s,\bullet}_{\sym, \Har}(A,M)$ is defined in
\eqref{20200625:eq5} as its subcomplex of $S_s$-invariants. It follows that the
symmetric $s$-sesquilinear Harrison cohomology is a direct summand of the
$s$-sesquilinear Harrison cohomology. 
\end{proof}
\section{Proof of the Main Theorem \ref{thm:main}}\label{sec:8}

Recall that by Theorem \ref{thm:homomorphism}
the map \eqref{eq:homomorphism} is injective,
and we only need to prove that it is surjective.
In other words, we need to show that
for every closed element $Y\in C^n_{\cl}$ in the classical complex, $dY=0$,
there exist $Z\in C^{n-1}_{\cl}$ and $\widetilde Y\in C^n_{\cl}$ such that
\begin{equation}\label{20200910:eq1}
Y=dZ+\widetilde Y
\,,
\end{equation}
and
\begin{equation}\label{20200910:eq2}
\widetilde Y^\Gamma=0
\,\,\text{ if }\,\,
|E(\Gamma)|\neq0
\,.
\end{equation}
Recall the filtration $F_s C^n_{\cl}$ of the classical complex, given by equation \eqref{20200702:eq1}.
Clearly, $Y\in F_1 C^n_{\cl}=C^n_{\cl}$,
and the condition \eqref{20200910:eq2} on $\widetilde Y$ is equivalent to saying 
that $\widetilde Y\in F_n C^n_{\cl}$.
Hence, by induction, it suffices to prove that, 
for $1\leq s\leq n-1$ and $Y_s\in F_s C^n_{\cl}$ such that $dY_s=0$,
we can find 
$Z_s\in C^{n-1}_{\cl}$ and $Y_{s+1}\in F_{s+1} C^n_{\cl}$ 
satisfying
\begin{equation}\label{20200910:eq3}
Y_s=dZ_s+Y_{s+1}
\,.
\end{equation}
Consider the coset $Y_s+F_{s+1}C^n_{\cl}\in \gr_s C^n_{\cl}$.
Then, since the differential $d$ of $C_{\cl}$ preserves the filtration \eqref{20200702:eq1},
$Y_s+F_{s+1}C^n_{\cl}$ is a closed element of the complex $\gr_s C_{\cl}$.
By Theorem \ref{thm:whatever},
the complex $\gr_s C_{\cl}$ is isomorphic to the complex $C^s_{\sym,\Har}$,
which, by Corollary \ref{cor:reimundo}, has trivial $n$-th cohomology, since $s\leq n-1$.
As a consequence,
there exists $Z_s+F_{s+1}C^{n-1}_{\cl}\in \gr_s C^{n-1}_{\cl}$
such that 
$$
Y_s+F_{s+1}C^n_{\cl}
=
d(Z_s+F_{s+1}C^{n-1}_{\cl})
\,.
$$
This is equivalent to $Y_{s+1}:=Y_s-dZ_s\in F_{s+1}C^n_{\cl}$, proving the theorem.



\end{document}